\documentclass[a4paper,11pt, leqno]{article}

\usepackage{luatex85}
\usepackage[english]{babel}
\usepackage[utf8x]{inputenc}
\usepackage[autostyle, english = american]{csquotes}
\MakeOuterQuote{"}
\usepackage{amssymb, mathrsfs, amsthm, amsmath, amstext, url}
\usepackage{stmaryrd}
\usepackage[all]{xy}
\usepackage{enumerate}
\usepackage{mdwlist}
\usepackage{graphicx}

\usepackage[multiple]{footmisc}
\usepackage{mathtools}
\usepackage{stackrel} 
\usepackage{cancel} 
\usepackage{tikz}
\usepackage{tikz-cd}
\usepackage{float}
\usetikzlibrary{babel,cd,decorations.pathreplacing,arrows,arrows.meta}
\usepackage{enumitem}
\tikzcdset{arrow style=tikz, diagrams={>={stealth[width=4pt,length=5pt]}}}
\tikzset{>=stealth}
\tikzcdset{arrow style=tikz, diagrams={>={Computer Modern Rightarrow[width=4pt,length=5pt]}}}
\usepackage{microtype}
\usepackage{bm}
\usepackage{hyperref}
\usepackage[capitalize,nameinlink]{cleveref}


\parindent=15pt

\textwidth 145mm
\textheight 214mm

\bibliographystyle{amsplain}

\newskip\procskipamount
\procskipamount=6pt plus1pt minus1pt

\newskip\interskipamount
\interskipamount=12pt plus1pt minus1pt

\newskip\refskipamount
\refskipamount=6pt plus1pt minus1pt

\newcommand{\procskip}{\vskip\procskipamount}
\newcommand{\interskip}{\vskip\interskipamount}

\newcommand{\procbreak}{\par
	\ifdim\lastskip<\procskipamount\removelastskip
	\penalty-100
	\procskip\fi
	\noindent\ignorespaces}

\newcommand{\titlebreak}{\par%
	\ifdim\lastskip<\interskipamount\removelastskip%
	\penalty10000%
	\interskip\fi%
	\noindent}%

\newcommand{\interbreak}{\par%
	\ifdim\lastskip<\interskipamount\removelastskip%
	\penalty-100%
	\interskip\fi%
	\noindent\ignorespaces}%

\theoremstyle{plain}
\newtheorem{theorem}{Theorem}
\crefname{theorem}{Theorem}{Theorems}
\newtheorem{lemma}[theorem]{Lemma}
\crefname{lemma}{Lemma}{Lemmas}
\newtheorem{def+prop}[theorem]{Definition and Proposition}
\crefname{def+prop}{Definition and Proposition}{Definitions and Propositions}
\newtheorem{prop}[theorem]{Proposition}
\crefname{prop}{Proposition}{Propositions}
\newtheorem{corollary}[theorem]{Corollary}
\crefname{corollary}{Corollary}{Corollaries}

\crefname{figure}{Figure}{Figures}
\crefname{chapter}{chapter}{chapters}

\theoremstyle{definition}
\newtheorem{definition}[theorem]{Definition}
\crefname{definition}{Definition}{Definitions}
\newtheorem{def+rem}[theorem]{Definition and Remark}
\crefname{def+rem}{Definition and Remark}{Definitions and Remarks}
\newtheorem{remark}[theorem]{Remark}
\crefname{remark}{Remark}{Remarks}
\newtheorem{notation}[theorem]{Notation}
\crefname{notation}{Notation}{Notations}

\theoremstyle{remark}

\crefname{example}{Example}{Examples}

\DeclareMathOperator{\Hom}{Hom}
\DeclareMathOperator{\colim}{colim}
\DeclareMathOperator{\card}{card}
\DeclareMathOperator{\spec}{Spec}

\DeclareMathOperator{\orbit}{orbit}
\DeclareMathOperator{\norm}{N}
\DeclareMathOperator{\charpol}{Pc}
\DeclareMathOperator{\trace}{tr}

\setlength{\textwidth}{16cm}
\setlength{\oddsidemargin}{+0.1in}

\makeatletter
\newcommand{\textlabel}[2]{%
	\protected@edef\@currentlabel{#1}
	\phantomsection
	#1\label{#2}
}
\makeatother

\title{Inheritance of local topological properties of schemes}

\author{Johann Gramzow}

\begin{document}
	
	\maketitle
	
	\begin{abstract}
		\noindent This paper  shows that for certain local topological properties, given a locally quasi-finite, flat and locally finitely presented map of schemes \(f\colon X\to Y\), if \(Y\) has the property, then so does \(X\). We also show that being locally topologically noetherian or locally connected or having locally finitely many irreducible components are examples for these local topological properties.
	\end{abstract}
	
	
	\section*{Introduction}
	
	\noindent In this paper, we consider the permanence of local topological properties under preimages. More precisely, we consider the question of which conditions we have to pose to a morphism of schemes \(f\colon X\to Y\) such that if \(Y\) has a property \(\textbf{P}\), then \(X\) also has \(\textbf{P}\). Ofer Gabber investigated this for \(\textbf{P}=\text{"locally connected"}\) in \cite{gabber} while Peter Scholze and Bargav Bhatt proved that \(\textbf{P}=\text{"locally topologically Noetherian"}\) is stable under taking étale preimages in \cite{BS proet}. We note and prove in \cref{section 1} that for these properties we have the following implications:
	\begin{align*}
		\substack{ \text{locally}\\\text{Noetherian} }\Rightarrow\substack{ \text{locally topologically}\\\text{Noetherian} }\Rightarrow \substack{ \text{locally finitely many}\\\text{ irreducible components} }\Rightarrow \substack{ \text{locally}\\\text{ connected.} }
	\end{align*}
	\noindent Recently, these weaker topological properties have become more important in homotopy theory of schemes. Classically, the étale fundamental group of a scheme \(X\) is defined only for locally Noetherian schemes via (the automorphism group of) the fiber functor on a covering space. Even refined invariants like the étale homotopy type by Artin-Mazur (cf. \cite{AM69}) or the étale topological type by Friedlander (cf. \cite{FL82}) use this restrictive assumption on \(X\) although the constructions work more generally for all locally connected schemes. A more recent refinement of the étale fundamental group, the proétale fundamental group by Bhatt and Scholze introduced in \cite{BS proet} and defined via so called geometric coverings, relaxed the condition posed to \(X\) to be locally topologically Noetherian. 
	However, Remark 7.3.11 in \cite{BS proet} indicates that the definition works in the generality of all schemes having locally a finite number of irreducible components. We strengthen  this claim by proving the counterpart of \cite[Lemma 6.6.10 (3)]{BS proet} for this weaker property, which states that it lifts along étale morphisms and thus to all geometric coverings. The overarching aim of this paper is giving sufficient nontrivial conditions for permanence of certain local topological properties and showing that \(\textbf{P}=\) "locally connected", \(\textbf{P}=\) "locally finitely many irreducible components" and \(\textbf{P}=\) "locally topologically Noetherian" fulfil these conditions. For this, we follow the ideas of Gabber in \cite{gabber}. 
	More exactly: 
	
	\vspace*{0.2cm}
	\noindent Let \(\textbf{P}\) be a property of schemes and let the following four conditions be denoted by \textlabel{$\dagger$}{intro:dagger}: 
	\begin{itemize}
		\item Let \(f\colon X\to Y\) be an integral surjective morphism of affine schemes, and $G$ the constant \(Y\)-group scheme associated with a finite group \(\tilde{G}\) acting on \(X\) via \(\rho \colon G\times_YX\to X\), such that \(\rho\times_Yp_X\colon G\times_Y X\to X\times_YX\) is surjective. Then, if \(Y\) has \(\textbf{P}\), so does \(X\).
		\item If \(\spec{\left(B\right)}\to \spec{\left(A\right)}\) is a surjective morphism of schemes and \(\spec{\left(B\right)}\) has \(\textbf{P}\), then so does \(\spec{\left(A\right)}\).
		\item If a scheme \(X\) has \(\textbf{P}\), then so does every open subscheme \(U\subseteq X\).
		\item If a scheme \(X\) has an open covering \((X_i)_{i\in I}\) such that each \(X_i\) has \(\textbf{P}\), then \(X\) has \(\textbf{P}\).
	\end{itemize} 
	
	\begin{prop}[\cref{body:loc_conn_proof,body:loc_fin_irred_comp_proof}]
		The properties "locally connected", "locally finitely many irreducible components" and "locally topologically noetherian" fulfil \ref{intro:dagger}.
	\end{prop}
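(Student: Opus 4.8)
\medskip
\noindent\textit{Proof strategy.} The third and fourth of the conditions \ref{intro:dagger} are formal. Each of the three properties is local on the source, and its "witnessing shape" — being topologically noetherian, having finitely many irreducible components, being connected and hence a legitimate member of a basis of opens — is inherited by every open subspace (intersect with the ambient irreducible components in the middle case; the other two are immediate). Thus the fourth condition is trivial, and for the third one simply restricts to the given open subscheme the open cover, resp.\ the basis of connected opens, that witnesses \(\mathbf P\).

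For the second condition write \(f\colon\spec B\to\spec A\) for the surjection; since then \(\Ker(A\to B)\) is contained in the nilradical, replacing \(A\) and \(B\) by their reductions we may assume \(A\hookrightarrow B\). As \(\spec A\) and \(\spec B\) are quasi-compact, for these two spaces "locally topologically noetherian" and "locally finitely many irreducible components" agree with their global counterparts (pass to a finite subcover and use that a finite union of topologically noetherian subspaces, resp.\ of subspaces with finitely many irreducible components, again has that property). Then \(\spec A=f(\spec B)\) is a continuous image of a noetherian space, hence noetherian; and if \(Z_1,\dots,Z_n\) are the irreducible components of \(\spec B\) the sets \(\overline{f(Z_i)}\) are irreducible, closed and cover \(\spec A\), bounding its number of irreducible components by \(n\). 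For "locally connected": since \(A\hookrightarrow B\), idempotents of \(A\) map injectively to idempotents of \(B\), so \(\pi_0(\spec B)\twoheadrightarrow\pi_0(\spec A)\); as \(\spec B\) is locally connected and quasi-compact, \(\pi_0(\spec B)\) is finite, hence so is \(\pi_0(\spec A)\), which reduces us to \(\spec A\) connected — and there it remains to see that local connectedness descends along the surjection, which is discussed in the last paragraph.

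The first condition is the substantial one. The morphism \(f\) is integral, hence universally closed, in particular closed; being surjective as well it is submersive, so \(Y\) carries the quotient topology from \(X\). Surjectivity of \(\rho\times_Yp_X\colon G\times_YX\to X\times_YX\) says exactly that two points of \(X\) have equal image under \(f\) if and only if they lie in a common \(\tilde G\)-orbit. Hence \(Y\) is homeomorphic to the quotient \(X/\tilde G\) of \(X\) by the finite group \(\tilde G\) acting through homeomorphisms, and the quotient map \(\pi\colon X\to Y\) is open (because \(\pi^{-1}(\pi(U))=\bigcup_{g\in\tilde G}gU\)), closed, surjective and of finite fibres. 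Now: for "locally topologically noetherian", quasi-compactness reduces \(Y\) to topologically noetherian, and one proves the purely topological claim that a closed surjection with finite fibres onto a topologically noetherian space has topologically noetherian source — for an open \(U\subseteq X\) with open cover \((V_\alpha)\), cover each finite set \(f^{-1}(y)\) for \(y\in f(U)\) by finitely many of the \(V_\alpha\), call their union \(V_y\), set \(W_y=Y\setminus f(X\setminus V_y)\), check \(y\in W_y\) and \(f^{-1}(W_y)\subseteq V_y\), observe that \(\bigcup_yW_y\) is an open, hence quasi-compact, subspace of \(Y\), extract a finite subcover \(W_{y_1},\dots,W_{y_r}\), and conclude that \(U\subseteq\bigcup_i V_{y_i}\) is quasi-compact. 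For "locally finitely many irreducible components", quasi-compactness reduces \(Y\) to having finitely many minimal primes, and it suffices that contraction maps \(\operatorname{Min}(B)\) to \(\operatorname{Min}(A)\) with finite fibres: the fibres are finite since each lies in a single \(\tilde G\)-orbit, and if some \(\mathfrak p\in\operatorname{Min}(B)\) had a prime \(\mathfrak p_0\subsetneq\mathfrak p\cap A\) of \(A\), one picks \(\mathfrak q\) over \(\mathfrak p_0\), applies lying over to the integral injection \(A/\mathfrak p_0\hookrightarrow B/\mathfrak q\) to obtain \(\mathfrak q'\supseteq\mathfrak q\) with \(\mathfrak q'\cap A=\mathfrak p\cap A\), so \(\mathfrak q'\) lies in the orbit of \(\mathfrak p\), say \(\mathfrak q'=g\cdot\mathfrak p\), and then \(g^{-1}\cdot\mathfrak q\subsetneq\mathfrak p\) (it has strictly smaller contraction) contradicts minimality of \(\mathfrak p\). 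For "locally connected", quasi-compactness and local connectedness of \(Y\) decompose \(Y\) into finitely many connected clopen pieces; pulling back along \(\pi\) we may assume \(Y\) connected, and must deduce that \(X\) is locally connected from \(X/\tilde G\) being so.

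The main obstacle is precisely this last point, i.e.\ the local connectedness case in the first and second conditions: here there is no quasi-compactness reduction to a global statement, and the familiar principle carries local connectedness from a cover or total space \emph{down} to a quotient, not up. The plan is to exploit fully that \(\pi\) — and, after the reduction above, the relevant map in the second condition — is simultaneously open, closed and finite-to-one: given \(x\in X\) and a connected open \(V_0\ni\pi(x)\) in \(Y\) (available since \(Y\) is locally connected), one studies the connected components of \(\pi^{-1}(V_0)\); openness, closedness and connectedness of \(V_0\) force each of them to surject onto \(V_0\), and the stabiliser \(\tilde H\leq\tilde G\) of such a component \(D\) exhibits \(V_0\) as \(D/\tilde H\). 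An induction on \(\lvert\tilde G\rvert\) then either replaces \(\tilde G\) by the proper subgroup \(\tilde H\) or makes \(\pi^{-1}(V_0)\) connected, producing the required connected open neighbourhoods; the delicate part is the bookkeeping needed to place these neighbourhoods inside a prescribed open. This is the point at which we follow Gabber's arguments in \cite{gabber} most closely.
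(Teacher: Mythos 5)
Parts of this are fine: conditions three and four are indeed formal for all three properties, your treatment of condition two for "locally topologically noetherian" and "locally finitely many irreducible components" is correct, and your minimal-primes argument for condition one in the irreducible-components case (going-up plus the bound by a single \(\tilde G\)-orbit) is a valid alternative to the paper's argument, which instead uses openness of \(f\) to see that the finitely many preimages of the generic points of \(Y\) form a dense subset of \(X\). The remaining pieces, however, have genuine gaps.

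The "purely topological claim" you invoke for the noetherian case of condition one --- that a closed surjection with finite fibres onto a noetherian space has noetherian source --- is false as stated: the transitivity of \(\operatorname{Aut}(X/Y)\) on fibres is essential. (Add to \(\spec(\mathbb{Z})\) one extra closed point \(r_p\) over each prime \(p\), declaring a set closed iff it is a finite set of non-generic points or contains \(\overline{\{\xi\}}\), \(\xi\) the generic point; then \(\overline{\{\xi\}}\cup\{r_p: p>n\}\) is a strictly descending non-stabilizing chain of closed sets, yet the projection to \(\spec(\mathbb{Z})\) is closed, surjective and finite-fibred between sober spaces.) Your proof of the claim also breaks at its first step: the \(V_\alpha\) cover only \(U\), so when \(f^{-1}(y)\not\subseteq U\) you cannot cover the fibre by finitely many \(V_\alpha\), and then \(y\notin W_y\); the tube argument only shows that \emph{saturated} opens are quasi-compact, which does not suffice. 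This is exactly the difficulty that the Noetherian induction in \cref{top_noe_prop} (de Jong's argument, which also uses transitivity to write \(X=X_1\cup\dots\cup X_r\) as the closures of the generic-fibre points) is there to overcome.

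Second, the locally connected case of condition two is not actually proved: reducing to \(\spec(A)\) connected gains nothing towards local connectedness, and deferring to your last paragraph fails because a general surjective morphism of affine schemes is not open, closed or finite-to-one (e.g.\ \(\spec(\mathbb{Z}[x])\to\spec(\mathbb{Z})\)), so the quotient machinery there does not apply; contrary to your remark, quasi-compactness \emph{is} the right tool here. The paper's argument: quasi-compact opens form a basis of \(\spec(A)\); for such a \(V\), the preimage \(f^{-1}(V)\) is quasi-compact and locally connected, hence has finitely many connected components, whose images are connected and cover \(V\); so \(V\) has finitely many, hence open, connected components. Finally, for the locally connected case of condition one your sketch hinges on every connected component \(D\) of \(\pi^{-1}(V_0)\) surjecting onto \(V_0\); since \(D\) is only known to be closed, \(\pi(D)\) is closed but not visibly open, so this step (and the subsequent induction) needs real justification --- the paper sidesteps this by citing Gabber's Proposition A.3 verbatim in \cref{prop1}, proving only the auxiliary openness and closedness statements.
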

	
	\begin{theorem}[\cref{A7}]
		Let $f\colon X \to Y$ be a  flat, locally quasi-finite morphism of schemes locally of finite presentation, \(\textbf{P}\) a property that satisfies \ref{intro:dagger} and let $Y$ have \(\textbf{P}\). Then, $X$ has \(\textbf{P}\). 
	\end{theorem}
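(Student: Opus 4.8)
The plan is to reduce the statement, by the standard local structure theory of flat quasi-finite morphisms of finite presentation, to a very concrete situation, and then to manufacture by hand a suitable finite group action so that the first item of \ref{intro:dagger} becomes applicable, with the remaining items used only for bookkeeping.

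First I would use that $\mathbf{P}$ passes to open subschemes and is local (the third and fourth items of \ref{intro:dagger}): it suffices to exhibit, for every $x\in X$, an open neighborhood having $\mathbf{P}$. Fix $x$, set $y=f(x)$, and pick an affine open $\spec(A)\subseteq Y$ with $y\in\spec(A)$; by \ref{intro:dagger} (open subschemes) $\spec(A)$ has $\mathbf{P}$. Since a flat, locally quasi-finite morphism locally of finite presentation is, locally on the source and target, of the standard form $\spec\bigl((A[t]/(g))_h\bigr)\to\spec(A)$ with $g\in A[t]$ monic, after shrinking $A$ I may assume there is an affine open $x\in\spec(B)\subseteq f^{-1}(\spec(A))$ with $B\cong(A[t]/(g))_h$, where $g$ is monic of some degree $n\ge 1$. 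As $\spec(B)=D(h)$ is open in $\bar Z:=\spec\bigl(A[t]/(g)\bigr)$, by \ref{intro:dagger} (open subschemes) it is enough to show $\bar Z$ has $\mathbf{P}$; note that $\bar Z\to\spec(A)$ is finite (and free of rank $n$) because $g$ is monic.

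Next I would introduce the universal splitting algebra $\widetilde{A}$ of $g$ over $A$: it is free over $A$ of rank $n!$, it carries an action of $\widetilde{G}:=S_n$ permuting the universal roots $t_1,\dots,t_n$ with $g=\prod_{i=1}^{n}(t-t_i)$ in $\widetilde{A}[t]$, and $\spec(\widetilde{A})\to\spec(A)$ is finite and faithfully flat, hence integral and surjective. The crucial point is that this action is "transitive on fibers", i.e. that $\underline{S_n}\times_{\spec(A)}\spec(\widetilde{A})\to\spec(\widetilde{A})\times_{\spec(A)}\spec(\widetilde{A})$, $(\sigma,z)\mapsto(\sigma z,z)$, is surjective. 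This I would check fiberwise over $\spec(A)$: over $\mathfrak{p}\in\spec(A)$ the fiber of $\spec(\widetilde{A})$ is $\spec\bigl(\widetilde{A}\otimes_A\kappa(\mathfrak{p})\bigr)$, the splitting algebra of $g\bmod\mathfrak{p}$, a finite $\kappa(\mathfrak{p})$-algebra whose $\overline{\kappa(\mathfrak{p})}$-points are precisely the orderings of the multiset of roots of $g\bmod\mathfrak{p}$ in $\overline{\kappa(\mathfrak{p})}$; since $S_n$ acts transitively on these orderings and geometric points surject onto points (everything being finite over a field), the union of the graphs of the elements of $S_n$ exhausts the fiber of $\spec(\widetilde{A})\times_{\spec(A)}\spec(\widetilde{A})$, giving surjectivity. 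Now the hypotheses of the first item of \ref{intro:dagger} are met, so from "$\spec(A)$ has $\mathbf{P}$" we deduce "$\spec(\widetilde{A})$ has $\mathbf{P}$".

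Finally I would transport $\mathbf{P}$ down to $\bar Z$. Over $\widetilde{A}$ the polynomial $g$ splits as $\prod_i(t-t_i)$, so the $n$ sections $t\mapsto t_i$ of $\bar Z\times_{\spec(A)}\spec(\widetilde{A})=\spec\bigl(\widetilde{A}[t]/(g)\bigr)\to\spec(\widetilde{A})$ have images covering the total space: fiberwise over $\mathfrak{q}\in\spec(\widetilde{A})$ the polynomial $g$ becomes a product of linear factors over $\kappa(\mathfrak{q})$, so every point of the fiber is one of the $(t-\bar t_i)$. Hence $\coprod_{i=1}^{n}\spec(\widetilde{A})\to\spec\bigl(\widetilde{A}[t]/(g)\bigr)$ is a surjection of affine schemes whose source has $\mathbf{P}$ by \ref{intro:dagger} (local), so by \ref{intro:dagger} (surjections of affines) $\spec\bigl(\widetilde{A}[t]/(g)\bigr)$ has $\mathbf{P}$; applying \ref{intro:dagger} (surjections of affines) once more to the surjection $\spec\bigl(\widetilde{A}[t]/(g)\bigr)=\bar Z\times_{\spec(A)}\spec(\widetilde{A})\to\bar Z$ yields that $\bar Z$ has $\mathbf{P}$, and unwinding the reductions above (together with \ref{intro:dagger}, local) gives that $X$ has $\mathbf{P}$. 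The step I expect to be most delicate is pinning down the local standard form (so that the compactifying scheme $\bar Z$ is genuinely finite locally free over the base) and verifying the fiberwise transitivity of the $S_n$-action; the rest is a formal chain of applications of \ref{intro:dagger}.
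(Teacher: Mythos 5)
The decisive gap is in your very first reduction: a flat, locally quasi-finite morphism locally of finite presentation is \emph{not}, Zariski-locally on source and target, of the monogenic standard form $\spec\bigl((A[t]/(g))_h\bigr)\to\spec(A)$ with $g$ monic. That standard form is special to \'etale morphisms (and, suitably modified, to unramified ones). A concrete counterexample already occurs for finite free morphisms: $B=k[x,y]/(x,y)^2$ over a field $k$ is finite, free of rank $3$, of finite presentation, hence flat and quasi-finite, but $\spec(B)$ is a one-point space, so no shrinking is possible, and $B$ is not generated by one element over $k$ (any element squares into $k\oplus kx\oplus ky$ after subtracting its constant term, so it generates a subalgebra of rank at most $2$), whence $B\not\cong (k[t]/(g))_h$. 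Since everything downstream of this reduction (the splitting algebra $\widetilde{A}$, the $\mathfrak{S}_n$-action on the orderings of the roots, the covering of $\spec(\widetilde{A}[t]/(g))$ by the $n$ sections $t\mapsto t_i$) lives on the monogenic model, the argument does not reach the general case. What is actually available is either Zariski's main theorem (which gives an open immersion into a finite, but not necessarily flat or monogenic, $T\to\spec(A)$) or, as the paper uses, \cite[18.12.1]{EGAIV4}, which after an \emph{\'etale} base change $Y'\to Y$ produces an open $V\subseteq X\times_YY'$ that is finite locally free over $Y'$ — but in general not monogenic, so your splitting algebra is not defined for it.

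That said, the part of your argument that survives is genuinely nice and is essentially a monogenic avatar of the paper's key lemma: for $B=A[t]/(g)$ with $g$ monic, your universal splitting algebra $\widetilde{A}$ with its transitive $\mathfrak{S}_n$-action plays exactly the role that the ring $D=(\otimes_A^dB)\otimes_{(\otimes_A^dB)^{\mathfrak{S}_d}}A$ (built from the norm map $\nu\colon(\otimes_A^dB)^{\mathfrak{S}_d}\to A$) plays in \cref{gabber:A.6}, and your fiberwise transitivity and section-covering checks are correct. This correctly handles the \'etale case, where the standard \'etale form does give monogenicity. To complete the proof you would still need the two missing ingredients of the paper's route: the statement of \cref{gabber:A.6} for an \emph{arbitrary} finite locally free $B/A$ (which is where the symmetric-tensor/norm machinery of Sections 2--3 is unavoidable, precisely because such $B$ need not be monogenic), and the \'etale-local d\'evissage of \cite[18.12.1]{EGAIV4} to reduce the general flat, locally quasi-finite, locally finitely presented case to that lemma over an \'etale cover whose own $\textbf{P}$-ness is supplied by the \'etale case.
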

	
	\noindent As étale morphisms are also flat, locally of finite presentation and locally quasi-finite we in particular find:
	
	\begin{corollary}
		Let \(f\colon X\to Y\) be an étale morphism of schemes, \(\textbf{P}\) a property that satisfies \ref{intro:dagger} and let $Y$ have \(\textbf{P}\). Then, $X$ has \(\textbf{P}\). 
	\end{corollary}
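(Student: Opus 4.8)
The plan is to obtain this as an immediate consequence of \cref{A7}; the only thing one must supply is the standard observation that an étale morphism automatically enjoys the three properties hypothesised there, namely flatness, being locally of finite presentation, and being locally quasi-finite.

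Recall that $f\colon X\to Y$ is étale if and only if it is flat, locally of finite presentation, and unramified. The first two properties are thus immediate from the definition. For the third, unramifiedness means (equivalently) that each fibre $X_y=X\times_Y\spec(\kappa(y))$ is a disjoint union of spectra of finite separable extension fields of $\kappa(y)$; in particular every fibre is discrete, so $f$ is quasi-finite at each point of $X$, i.e.\ locally quasi-finite. Note that no quasi-compactness or finiteness assumption on $X$ or $Y$ enters here, which is precisely why "locally quasi-finite" (rather than "quasi-finite") is the correct notion in the hypothesis of \cref{A7}. With these three properties in hand, and with $Y$ having \textbf{P}, \cref{A7} applies verbatim and yields that $X$ has \textbf{P}.

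There is essentially no obstacle here: all the substance is contained in \cref{A7}, and the reduction uses only textbook facts about étale morphisms (see, e.g., the conventions of \cite{BS proet}, or EGA IV, or the Stacks project). The single point worth a moment's care is that the notions of "locally of finite presentation" and "locally quasi-finite" used in \cref{A7} agree with those implicit in whichever definition of "étale" one adopts — but these are the standard notions and they match without difficulty.
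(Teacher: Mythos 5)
Your proof is correct and matches the paper's own reasoning exactly: the corollary is deduced from \cref{A7} by observing that an étale morphism is flat, locally of finite presentation, and locally quasi-finite. The extra detail you supply (discreteness of fibres via unramifiedness) is a standard justification the paper leaves implicit.
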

	
	\noindent We prove the theorem above by reducing from the first property of \ref{intro:dagger} and using the following lemma.
	
	\begin{lemma}[\cref{gabber:A.6}]
		Let \(X\) and \(Y\) be schemes. If \(f\colon X\to Y \) is finite locally free, \(\textbf{P}\) a property that satisfies \ref{dagger} and \(Y\) has \(\textbf{P}\), then \(X\) has \(\textbf{P}\).
	\end{lemma}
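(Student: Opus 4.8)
The plan is to reduce the statement to the first condition of \ref{intro:dagger} by working locally on $Y$ and passing to a suitable Galois-type closure. Since $f$ is finite locally free and the property $\textbf{P}$ is local on the target by the last two conditions of \ref{intro:dagger} (stability under open subschemes and under open coverings), I may assume $Y = \spec(A)$ is affine and connected and that $f$ is finite free of some constant rank $n$; then $X = \spec(B)$ with $B$ a locally free $A$-module of rank $n$. The idea is to construct an auxiliary affine scheme $X'$ together with a finite surjective $A$-algebra $B'$ such that $\spec(B') \to \spec(B)$ is surjective and $\spec(B')\to\spec(A)$ carries a group action satisfying the hypotheses of the first bullet of \ref{intro:dagger}; then that bullet gives $\textbf{P}$ for $\spec(B')$, the third bullet (open subschemes — or rather the second bullet, surjectivity) transports it appropriately, and a final application of the surjectivity condition yields $\textbf{P}$ for $X$.

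Concretely, I would take the "split" algebra obtained from the $n$-fold tensor power: let $B^{\otimes n}$ be the $n$-fold tensor product of $B$ over $A$, on which the symmetric group $\tilde G = S_n$ acts by permuting the factors. The classical fact (used, e.g., in the construction of Galois closures of finite locally free algebras, cf. the "universal splitting algebra") is that the natural map $\spec(B^{\otimes n}) \to \spec(B)$ obtained from the inclusion $B \hookrightarrow B^{\otimes n}$ into the first factor is finite surjective, and moreover that the $S_n$-action together with the projection $p_X$ makes $S_n \times_Y \spec(B^{\otimes n}) \to \spec(B^{\otimes n}) \times_Y \spec(B^{\otimes n})$ surjective — this is exactly the statement that, étale-locally (or after base change to a point), the $n$-element fiber of $B$ becomes split and $S_n$ acts transitively on the set of orderings of that fiber. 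Thus $X' := \spec(B^{\otimes n})$ with the $S_n$-action satisfies the hypotheses of the first bullet of \ref{intro:dagger}. Applying that bullet with base $\spec(B)$ in place of $Y$ is not quite what is stated (there the base has $\textbf{P}$, not $\spec(B)$); instead I apply the first bullet with base $Y = \spec(A)$: $S_n$ acts on $B^{\otimes n}$ over $A$, the map $S_n \times_Y X' \to X' \times_Y X'$ is surjective, $X' \to Y$ is integral surjective, and $Y$ has $\textbf{P}$, so $X'$ has $\textbf{P}$. Then $\spec(B^{\otimes n}) \to \spec(B)$ is a surjective map of affine schemes whose source has $\textbf{P}$, so by the second bullet of \ref{intro:dagger} the target $X = \spec(B)$ has $\textbf{P}$.

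The main obstacle I anticipate is verifying the surjectivity hypothesis $\rho \times_Y p_X \colon G \times_Y X' \to X' \times_Y X'$ for the universal splitting construction, since this is a fibre-by-fibre condition that must be checked at every point of $Y$ and requires knowing that $B \otimes_A \kappa(y)$, after the faithfully flat base change $B^{\otimes n}$, becomes a product of copies of a common ring on which $S_n$ permutes the factors transitively — the subtlety being that $B\otimes_A\kappa(y)$ need not be étale or reduced, so one works with the points of $\spec(B^{\otimes n}\otimes_A\kappa(y))$ directly and checks that two such points lying over the same point of $\spec(B\otimes_A\kappa(y))\times_{\kappa(y)}\spec(B\otimes_A\kappa(y))$ differ by a permutation. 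A secondary point to handle carefully is the reduction to constant rank: $Y$ need not be connected, but decomposing $Y$ into the open-and-closed loci $Y_n$ where $f$ has rank $n$ and using the open-covering condition reduces to that case. Once these are in place the rest is a formal chain of the four conditions in \ref{intro:dagger}.
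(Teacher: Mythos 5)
Your overall architecture matches the paper's: reduce to the affine case, produce an auxiliary affine $A$-algebra carrying an $\mathfrak{S}_n$-action to which the group-action condition (first bullet) of \ref{dagger} applies, then descend to $\spec{\left(B\right)}$ via the surjectivity condition (second bullet). But the specific choice $X' = \spec{\left(B^{\otimes n}\right)}$ does not work: the shear map $\mathfrak{S}_n \times_Y X' \to X' \times_Y X'$ is \emph{not} surjective in general, because $\mathfrak{S}_n$ does not act transitively on the fibers of $X' \to \spec{\left(A\right)}$. Already for $A = k$ a field and $B = k \times k$ (rank $n=2$), the fiber $\spec{\left(B\otimes_k B\right)}$ has four points, indexed by pairs $(i,j)$ with $i,j\in\{1,2\}$, and the $\mathfrak{S}_2$-action $(i,j)\mapsto(j,i)$ has three orbits. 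In general, over a geometric point where $B$ splits, the fiber of $B^{\otimes n}$ consists of all $n^{n}$ tuples, and $\mathfrak{S}_n$ acts transitively only on the $n!$ tuples with pairwise distinct entries. So the issue you flag at the end as "the main obstacle" is not a verification to be carried out but an actual failure of the construction.

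The paper repairs this by replacing $B^{\otimes n}$ with the quotient $D = \left(\otimes_A^{n}B\right)\otimes_{(\otimes_A^{n}B)^{\mathfrak{S}_n}} A$, formed along the surjective norm map $\nu\colon \mathrm{TS}^{n}(B)\to A$ of \cref{SGAProp6.3.1}; this base change cuts $\spec{\left(B^{\otimes n}\right)}$ down to a closed subscheme on which $\spec{\left(A\right)}$ is the quotient by $\mathfrak{S}_n$, so transitivity on fibers holds and the group-action condition of \ref{dagger} applies (in the example above, $D=k^{2}$ is spanned by $e_1\otimes e_2$ and $e_2\otimes e_1$, which $\mathfrak{S}_2$ does permute transitively). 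The price is that surjectivity of $\spec{\left(D\right)}\to\spec{\left(B\right)}$, induced by $b\mapsto q(b\otimes 1\otimes\dots\otimes 1)$, is no longer the easy faithful-flatness statement you invoke for the full tensor power: the paper proves it by showing that the kernel of $q_1$ consists of nilpotents, using the factorization $\charpol_{B/A}(b;T)=\prod_{i=1}^{n}(T-q_i(b))$ in $D[T]$ (\cref{A.6_helper_prop}) together with Cayley--Hamilton, and then concludes surjectivity from dominance plus finiteness. To complete your argument you would need to substitute this norm-quotient (or an equivalent splitting-algebra construction) for $B^{\otimes n}$ and supply that nilpotence argument.
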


	\noindent The proofs of \cref{gabber:A.6} and \cref{A7} require some knowledge that can be found in Grothendieck's works but even so may not be well-known to everyone. Hence, we dedicate the first four sections to recalling these results and derive the aforementioned results in the fifth section.
	
	\subsubsection*{Acknowledgements}
	
	\thanks{ 
		This is my Bachelor's thesis. As such, I would first like to express my gratitude to my advisor Prof. Dr. Torsten Wedhorn not only for providing me with the topic but also for the many helpful remarks and suggestions in understanding this topic.\vspace{0.15cm}\\
		Second, I want to thank Catrin Mair for her advice and remarks on this paper.
		\vspace{0.15cm}\\
		Third, my thanks go to my friends for often spending lunch listening to my ramblings and discussing various topics of my thesis, be it norms, group actions or possible counterexamples e.g. for \cref{top_noe_prop} when I was still uncertain whether the statement is true or not. In particular, I want to thank Julian, Michelle, Niklas, Ronald and Saskia for their helpful remarks on this thesis.\vspace{0.15cm}\\
		Finally, I would like to express special thanks to Johan de Jong for providing a proof of \cref{top_noe_prop} and to Ofer Gabber for his remarks on this proposition as well as his explanations relating to \cref{A.6_helper_prop}. \vspace{0.15cm}\\
		This project was supported by the Deutsche Forschungsgemeinschaft
		(DFG, German Research Foundation) TRR 326.
	}
	
	\numberwithin{theorem}{section}
	
	\section{Local topological properties}\label{section 1}
	We first recall some basic results of the properties we are interested in.
	
	\begin{def+prop}\cite[I \textsection 11 Definition 4]{Bou95}\label{top:def1}
		A topological space \(X\) is called \emph{locally connected} if one of the following two equivalent conditions are satisfied:
		\begin{enumerate}[label=(\roman*)]
			\item Every point of \(X\) has a fundamental system of connected neighbourhoods,
			\item for any open subset \(U\subseteq X\), the connected components of \(U\) are open.
		\end{enumerate}
		
	\end{def+prop}
	
	\begin{lemma}\cite[Lemma A.1]{gabber}\label{lemma1} 
		If $f : X \rightarrow  Y$ is an open continuous map and if $X$ is locally connected, then $f(X)$ is locally connected. In particular, $X$ is locally connected as soon as it has a covering by locally connected open subspaces.
	\end{lemma}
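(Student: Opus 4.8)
The plan is to prove the two assertions in turn, the second being a near-immediate consequence of the first.

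\medskip

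\noindent\textbf{The image is locally connected.}
First I would recall that for an open continuous surjection onto its image it suffices to verify criterion~(ii) of \cref{top:def1} for the space \(f(X)\): given an open subset \(V\subseteq f(X)\) and a connected component \(C\) of \(V\), we must show \(C\) is open in \(f(X)\). Since \(f\) is open and continuous, \(f^{-1}(V)\) is open in \(X\), and \(f\) restricts to an open continuous surjection \(f^{-1}(V)\to V\); replacing \(X\) by \(f^{-1}(V)\) and \(Y\) by \(V\) we may assume \(V=f(X)\) is the whole space and must show that each connected component \(C\) of \(f(X)\) is open. Now take any point \(y\in C\) and any \(x\in f^{-1}(y)\). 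Since \(X\) is locally connected, the connected component \(W\) of \(X\) containing \(x\) is open in \(X\); hence \(f(W)\) is open in \(f(X)\). As \(f(W)\) is the continuous image of a connected set it is connected and contains \(y\), so \(f(W)\subseteq C\). Thus every point of \(C\) has an open neighbourhood (in \(f(X)\)) contained in \(C\), which shows \(C\) is open. By \cref{top:def1} this proves \(f(X)\) is locally connected.

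\medskip

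\noindent\textbf{The covering statement.}
Suppose \(X=\bigcup_{i\in I}X_i\) with each \(X_i\) open in \(X\) and locally connected. The inclusion \(X_i\hookrightarrow X\) is continuous, injective, and open (the \(X_i\) are open), so its image \(X_i\) (as a subspace of \(X\)) is locally connected by the first part applied to the map \(X_i\to X\). Hence each point of \(X\) lies in an open subset that is locally connected, and by criterion~(i) of \cref{top:def1}, applied pointwise, \(X\) itself is locally connected: a fundamental system of connected neighbourhoods of \(x\) inside some \(X_i\) (which exists since \(X_i\) is locally connected and open in \(X_i\), hence the neighbourhoods are also neighbourhoods in \(X\)) serves as one in \(X\).

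\medskip

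\noindent\textbf{Main obstacle.}
The only subtlety is bookkeeping about which topology one is testing openness in: one must consistently work with the subspace topology on \(f(X)\) rather than on \(Y\), and observe that \(f\colon X\to f(X)\) is again open when \(f\colon X\to Y\) is (the image of an open set is open in \(f(X)\) because it is the intersection of an open set of \(Y\) with \(f(X)\)). Once that reduction is in place the argument is the elementary one above, using only that continuous images of connected sets are connected together with the open-image hypothesis to propagate openness of components.
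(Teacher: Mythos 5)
Your proof is correct. The paper itself gives no proof of this lemma — it is quoted directly from Gabber's \cite[Lemma A.1]{gabber} — and your argument is the standard one: propagating openness of components through the open map via criterion (ii) of \cref{top:def1}, then handling the covering statement pointwise via criterion (i). The only cosmetic remark is that your application of the first part to the inclusion \(X_i\hookrightarrow X\) is vacuous (the image is \(X_i\) with its own topology, already assumed locally connected); the way the covering statement genuinely follows \emph{from} the first part is by applying it to the open continuous surjection \(\coprod_i X_i\to X\), though your direct argument with fundamental systems of connected neighbourhoods is equally valid.
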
 
	
	\begin{definition}\cite[Exercise 3.15]{GW1} \label{top:def_fin_irred}
		We say that a topological space \(X\) has \emph{locally finitely many irreducible components} if every point of \(X\) has an open neighborhood which meets only finitely many irreducible components of \(X\).
	\end{definition}
	
	\begin{lemma}\label{finite}
		Let $X$ be a topological space and $U\subseteq X$ an open set. Then 
		\begin{align*}
			\left\{W\subseteq U \text{ irred. comp.}\right\} &\xleftrightarrow{\text{1:1}} \left\{\substack{V \subseteq X \text{ irred. comp.} \\ U\cap V \neq \emptyset }\right\}\\
			W &\mapsto \overline{W}\\
			V\cap U &\mapsfrom V  
		\end{align*}
		is a bijection of sets.
	\end{lemma}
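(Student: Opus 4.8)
\textit{Proof plan.}
The plan is to reduce the statement to two elementary facts, with all closures taken in \(X\) unless otherwise noted: (a) the closure of an irreducible subset is irreducible; and (b) if \(V\subseteq X\) is irreducible and \(U\subseteq X\) is open with \(V\cap U\neq\emptyset\), then \(V\cap U\) is irreducible and dense in \(V\), so in particular \(\overline{V\cap U}=\overline{V}\). I would first record these, together with the observation that a subset \(Z\subseteq U\) which is closed in \(U\) satisfies \(Z=\overline{Z}\cap U\): writing \(Z=C\cap U\) with \(C\subseteq X\) closed, one has \(\overline{Z}\subseteq C\), hence \(\overline{Z}\cap U\subseteq C\cap U=Z\), and the reverse inclusion is clear. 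Note that an irreducible component of \(U\), being in particular closed in \(U\), is of this form.

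Next I would check that the two assignments are well defined. Let \(W\) be an irreducible component of \(U\). By (a), \(\overline{W}\) is irreducible in \(X\). To see it is maximal among irreducible subsets, take an irreducible \(V\subseteq X\) with \(\overline{W}\subseteq V\); then \(V\cap U\supseteq\overline{W}\cap U=W\neq\emptyset\), and \(V\cap U\) is irreducible in \(U\) by (b), so maximality of \(W\) gives \(V\cap U=W\). Hence \(\overline{V}=\overline{V\cap U}=\overline{W}\) by (b), and from \(V\subseteq\overline{V}=\overline{W}\subseteq V\) we get \(V=\overline{W}\). Thus \(\overline{W}\) is an irreducible component of \(X\), and it meets \(U\) since \(\overline{W}\cap U=W\neq\emptyset\). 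Conversely, let \(V\) be an irreducible component of \(X\) with \(V\cap U\neq\emptyset\). By (b), \(V\cap U\) is irreducible in \(U\); if \(W'\subseteq U\) is irreducible with \(V\cap U\subseteq W'\), then by (a) \(\overline{W'}\) is irreducible in \(X\) and contains \(\overline{V\cap U}=\overline{V}=V\) (the last equality because \(V\), being a component, is closed), so maximality of \(V\) forces \(\overline{W'}=V\); therefore \(W'\subseteq\overline{W'}\cap U=V\cap U\), i.e. \(W'=V\cap U\). Hence \(V\cap U\) is an irreducible component of \(U\).

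Finally, the two maps are mutually inverse: starting from an irreducible component \(W\) of \(U\), the opening observation gives \(\overline{W}\cap U=W\); starting from an irreducible component \(V\) of \(X\) meeting \(U\), fact (b) together with \(V\) closed gives \(\overline{V\cap U}=\overline{V}=V\). This yields the asserted bijection.

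The only mildly delicate point — the main thing to get right rather than a genuine obstacle — is the bookkeeping between closures in \(U\) and closures in \(X\) and running each maximality argument in the correct space; fact (b), i.e. density of a nonempty open subset of an irreducible space, is exactly what makes the passage between the two work, and everything else is formal.
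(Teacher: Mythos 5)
Your proof is correct and complete: the reduction to the two standard facts (closures of irreducibles are irreducible; a nonempty open subset of an irreducible set is irreducible and dense), together with the observation that a set closed in $U$ equals the trace on $U$ of its closure in $X$, gives exactly what is needed, and the maximality arguments in each direction are carried out in the right ambient space. There is nothing in the paper to compare against: the lemma is stated there without proof, as a standard fact (it is essentially \cite[Lemma 1.21]{GW1}), so your write-up simply supplies the omitted argument.
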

	
	\begin{lemma}\label{irred1}
		A topological space $X$ with locally finitely many irreducible components locally has finitely many connected components and is therefore locally connected.
	\end{lemma}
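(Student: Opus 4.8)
The plan is to argue locally and to use the standard fact that any topological space is the union of its irreducible components, each of which is connected.

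First I would fix a point \(x\in X\). By hypothesis there is an open neighbourhood \(U\) of \(x\) meeting only finitely many irreducible components of \(X\). By \cref{finite} the irreducible components of \(U\) are precisely the sets \(V\cap U\) for \(V\) an irreducible component of \(X\) with \(V\cap U\neq\emptyset\), so \(U\) has only finitely many irreducible components, say \(Z_1,\dots,Z_n\). Now I would recall that the irreducible components cover the space: every singleton is irreducible, and by Zorn's lemma (a union of a chain of irreducible subsets is irreducible) each singleton lies in a maximal one. Hence \(U=Z_1\cup\dots\cup Z_n\). Since each \(Z_i\) is irreducible, it is connected, so each \(Z_i\) is contained in a single connected component of \(U\); as the \(Z_i\) cover \(U\), it follows that \(U\) has at most \(n\) connected components. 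As \(x\) was arbitrary, this shows that \(X\) locally has finitely many connected components.

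To deduce local connectedness I would verify condition (ii) of \cref{top:def1}. Let \(W\subseteq X\) be open and \(C\) a connected component of \(W\); it suffices to show that \(C\) is a neighbourhood of each \(x\in C\). Choose an open neighbourhood \(U_0\) of \(x\) in \(X\) meeting finitely many irreducible components of \(X\), and set \(U=U_0\cap W\), an open neighbourhood of \(x\) contained in \(W\) which still meets only finitely many irreducible components of \(X\); by the previous paragraph \(U\) has finitely many connected components \(C_1,\dots,C_m\). Connected components are closed, so each \(C_j=U\setminus\bigcup_{k\neq j}C_k\) is open in \(U\), hence open in \(X\). The component \(C_j\) containing \(x\) is connected and contains \(x\), so \(C_j\subseteq C\), and therefore \(C\) contains the open neighbourhood \(C_j\) of \(x\). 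Thus \(C\) is open, proving local connectedness.

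The argument is essentially routine, so I do not anticipate a serious obstacle; the only points needing a little care are that the irreducible components genuinely cover the space (which uses Zorn's lemma) and that in a space with only finitely many connected components those components are open, both of which are handled above. The conceptual key is simply to cover the chosen neighbourhood by its finitely many irreducible components.
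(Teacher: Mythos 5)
Your proof is correct and follows essentially the same route as the paper: pass to a neighbourhood with finitely many irreducible components via \cref{finite}, observe that these components are connected and cover the open set, conclude there are finitely many connected components, and hence that they are open. The only difference is organizational — the paper first reduces to an open cover via \cref{lemma1}, whereas you verify condition (ii) of \cref{top:def1} pointwise — and your version spells out the covering-by-irreducible-components fact and the openness of finitely many closed components in more detail.
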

	\begin{proof}
		By \cref{lemma1} it suffices to show local connectedness on an open cover. Hence, we consider an open cover $(U_i)_{i\in I}$ such that each $U_i$ has only finitely many irreducible components and prove that the connected components of some open $Q\subseteq U_i$ are open. As the irreducible components of $Q$ are connected and cover $Q$, we see that they must in particular cover all connected components of $Q$. As \(Q\) is open, it has only finitely many irreducible components. Therefore, we conclude that there are also only finitely many connected components and hence, they are open.
	\end{proof}
	
	\begin{lemma}\label{top:fin_irred}
		Let \(X\) be a quasi-compact topological space which has locally  finitely many irreducible components. Then \(X\) has finitely many irreducible components.
	\end{lemma}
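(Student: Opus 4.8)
The plan is to use quasi-compactness to reduce to finitely many of the neighbourhoods provided by \cref{top:def_fin_irred}. First I would, for each point $x\in X$, choose an open neighbourhood $U_x$ that meets only finitely many irreducible components of $X$; such a $U_x$ exists by the very definition of having locally finitely many irreducible components. The family $(U_x)_{x\in X}$ is then an open cover of $X$, so by quasi-compactness there are finitely many points $x_1,\dots,x_n$ with $X=U_{x_1}\cup\dots\cup U_{x_n}$.

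Next I would observe that every irreducible component $V$ of $X$ is nonempty, and hence meets at least one of the $U_{x_i}$. Consequently the set of all irreducible components of $X$ is contained in $\bigcup_{i=1}^{n}\{\,V\subseteq X \text{ irreducible component} : V\cap U_{x_i}\neq\emptyset\,\}$. Each of the $n$ sets occurring in this union is finite by the choice of $U_{x_i}$, so the set of irreducible components of $X$ is a finite union of finite sets, hence finite, which is the claim.

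I do not expect any genuine obstacle here; the only point that deserves a moment's care is the remark that an irreducible component, being nonempty, necessarily falls into the "meets" part of one of the chosen neighbourhoods, so that the finite subcover really does account for all components of $X$. One could alternatively route the second step through \cref{finite}, identifying the irreducible components of $X$ that meet $U_{x_i}$ with the irreducible components of $U_{x_i}$ itself, but this detour is not needed for the argument.
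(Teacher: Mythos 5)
Your proof is correct and follows essentially the same route as the paper: take a finite subcover of neighbourhoods witnessing the local finiteness, then observe that every irreducible component meets one of them. The paper phrases the counting step via \cref{finite} (components of $U_i$ versus components of $X$ meeting $U_i$), which you correctly note is an optional detour.
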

	\begin{proof}
		Let \((U_i)_{i\in I}\) be an open cover of \(X\) such that \(U_i\) has only finitely many irreducible components for all \(i\in I\). As \(X\) is quasi-compact, we can take a finite subcover \((U_{i})_{i=1}^{n}\). Due to \cref{finite}, we conclude that \(X\) has only finitely many irreducible components.
	\end{proof}

	\begin{definition}\cite[\href{https://stacks.math.columbia.edu/tag/0051}{Tag 0051}]{stacks}\label{top:def_loc_noe}
		A topological space \(X\) is called \emph{Noetherian} if the descending chain condition holds for closed subsets of \(X\), i.e. every descending chain of closed subsets \(Z_1\supseteq Z_2\supseteq \dots\) becomes stationary. A topological space is called \emph{locally Noetherian} if every point has an open neighbourhood which is Noetherian.
	\end{definition}
	
	\begin{definition}\cite[Definition 6.6.9]{BS proet}
		A scheme \(X\) is said to be \emph{topologically Noetherian (resp. locally topologically Noetherian)} if its underlying topological space is Noetherian (resp. locally Noetherian).
	\end{definition}
	
	\begin{lemma}\label{top_noe_equiv}
		Let \(X\) be a topological space. Then the following are equivalent:
		\begin{enumerate}[label=(\roman*)]
			\item \(X\) is Noetherian. \label{top_noe:item1}
			\item Every ascending chain of open subsets \(U_1\subseteq U_2\subseteq \dots\) of \(X\) becomes stationary.\label{top_noe:item2}
			\item Every open subset \(U\subseteq X\) is quasi-compact.\label{top_noe:item3}
		\end{enumerate}
	\end{lemma}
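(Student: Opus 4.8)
The plan is to establish the cycle of implications \ref{top_noe:item1} $\Leftrightarrow$ \ref{top_noe:item2} $\Rightarrow$ \ref{top_noe:item3} $\Rightarrow$ \ref{top_noe:item2}, which suffices since \ref{top_noe:item1} $\Leftrightarrow$ \ref{top_noe:item2} is handled separately and cheaply.

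First I would dispatch the equivalence \ref{top_noe:item1} $\Leftrightarrow$ \ref{top_noe:item2} by passing to complements: the assignment $Z \mapsto X \setminus Z$ is an inclusion-reversing bijection between closed and open subsets of $X$, so it turns a descending chain $Z_1 \supseteq Z_2 \supseteq \cdots$ of closed sets into an ascending chain $X\setminus Z_1 \subseteq X \setminus Z_2 \subseteq \cdots$ of open sets, and one chain is stationary from some index on precisely when the other is. Hence the descending chain condition on closed sets is literally the same assertion as the ascending chain condition on open sets.

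Next, for \ref{top_noe:item2} $\Rightarrow$ \ref{top_noe:item3}, let $U \subseteq X$ be open and let $(V_i)_{i \in I}$ be an open cover of $U$; I want to extract a finite subcover. Assuming no finite subfamily covers $U$, I would recursively choose indices $i_1, i_2, \dots$ as follows: having chosen $i_1, \dots, i_n$, the set $W_n := V_{i_1} \cup \cdots \cup V_{i_n}$ is a proper subset of $U$ by assumption, so pick a point of $U \setminus W_n$ and an index $i_{n+1}$ with that point in $V_{i_{n+1}}$; then $V_{i_{n+1}} \not\subseteq W_n$, so $W_n \subsetneq W_{n+1}$. This produces a strictly ascending chain $W_1 \subsetneq W_2 \subsetneq \cdots$ of open subsets of $X$, contradicting \ref{top_noe:item2}. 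Therefore some finite subfamily covers $U$, i.e. $U$ is quasi-compact.

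Finally, for \ref{top_noe:item3} $\Rightarrow$ \ref{top_noe:item2}, given an ascending chain $U_1 \subseteq U_2 \subseteq \cdots$ of open subsets, set $U := \bigcup_{n} U_n$, which is open. By \ref{top_noe:item3}, $U$ is quasi-compact, and $(U_n)_n$ is an open cover of $U$, so finitely many of them cover $U$; taking $N$ to be the largest index appearing, $U = U_N$ by nestedness, and then $U_n = U_N$ for all $n \geq N$, so the chain is stationary. I do not expect any real obstacle here; the only mildly non-formal point is the recursive choice of indices in the middle implication, which is routine.
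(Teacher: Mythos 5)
Your proof is correct: the complementation argument for (i)$\Leftrightarrow$(ii), the recursive construction of a strictly ascending chain of finite unions for (ii)$\Rightarrow$(iii), and the union-of-the-chain argument for (iii)$\Rightarrow$(ii) are all sound (the middle step quietly uses dependent choice, which is standard here). The paper itself supplies no proof of this lemma, treating it as a known fact from the Stacks Project, so there is nothing to compare against; your argument is the standard one.
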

	
	\begin{lemma}\label{top:loc_noe}
		Let \(X\) be a quasi-compact locally Noetherian topological space. Then \(X\) is Noetherian.
	\end{lemma}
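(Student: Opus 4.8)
The plan is to combine quasi-compactness with the local hypothesis to reduce to a \emph{finite} cover by Noetherian open subspaces, and then to check the descending chain condition directly on such a cover. First I would use the definition of locally Noetherian to choose, for every point of \(X\), a Noetherian open neighbourhood; these form an open cover of \(X\), and since \(X\) is quasi-compact I may extract a finite subcover \(U_1,\dots,U_n\) with each \(U_i\) Noetherian.

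Next I would verify the descending chain condition for closed subsets of \(X\). Given a descending chain of closed subsets \(Z_1\supseteq Z_2\supseteq\cdots\) of \(X\), intersecting with a fixed \(U_i\) yields a descending chain \(Z_1\cap U_i\supseteq Z_2\cap U_i\supseteq\cdots\) of closed subsets of \(U_i\); since \(U_i\) is Noetherian, this chain stabilises, say for all indices \(\ge N_i\). Setting \(N=\max(N_1,\dots,N_n)\) — here is where finiteness of the subcover is essential — we get \(Z_k\cap U_i=Z_{k+1}\cap U_i\) for every \(i\) and every \(k\ge N\). Because the \(U_i\) cover \(X\), a closed subset \(Z\subseteq X\) is determined by the family \((Z\cap U_i)_i\), so \(Z_k=Z_{k+1}\) for all \(k\ge N\); hence the chain stabilises and \(X\) is Noetherian.

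Alternatively, and perhaps more in the spirit of \cref{top_noe_equiv}, one could argue via characterisation \ref{top_noe:item3}: for any open \(U\subseteq X\), each \(U\cap U_i\) is open in the Noetherian space \(U_i\), hence quasi-compact, and \(U=\bigcup_{i=1}^{n}(U\cap U_i)\) is then a finite union of quasi-compact sets, so \(U\) is quasi-compact; \cref{top_noe_equiv} then gives that \(X\) is Noetherian. I do not expect a genuine obstacle here: the only point requiring care is that the reduction to a finite subcover is exactly what makes "finitely many stabilisation thresholds" (or "finite union of quasi-compacts") legitimate, and that closed subsets are detected locally on an open cover.
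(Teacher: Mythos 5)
Your proposal is correct and follows the same route as the paper: extract a finite subcover of Noetherian opens by quasi-compactness, then conclude that a finite union of Noetherian subspaces is Noetherian. The paper simply cites \cite[\href{https://stacks.math.columbia.edu/tag/0053}{Tag 0053}]{stacks} for that last step, whereas you prove it inline (correctly) via the stabilisation indices \(N_i\).
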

	\begin{proof}
		Let \((U_i)_{i\in I}\) be an open cover of \(X\) such that \(U_i\) is Noetherian for all \(i\in I\). As \(X\) is quasi-compact, we can take a finite subcover \((U_{i})_{i=1}^{n}\). Due to \cite[\href{https://stacks.math.columbia.edu/tag/0053}{Tag 0053}]{stacks}, we conclude that \(X\) is Noetherian.
	\end{proof}
	
	\begin{lemma}
		Every locally Noetherian topological space \(X\) has locally finitely many irreducible components.
	\end{lemma}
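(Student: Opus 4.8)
The plan is to localize the statement and reduce it to the classical fact that a Noetherian topological space has only finitely many irreducible components. Concretely, I would start by fixing a point $x\in X$. Since $X$ is locally Noetherian, there is an open neighbourhood $U\subseteq X$ of $x$ which is a Noetherian topological space (with the subspace topology). It then suffices to show that this single open set $U$ meets only finitely many irreducible components of $X$, because $x$ was arbitrary and this is exactly the condition in \cref{top:def_fin_irred}.

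Next I would recall that a Noetherian topological space has only finitely many irreducible components. This is standard and can be proved by Noetherian induction: if the collection of closed subsets that cannot be written as a finite union of irreducible closed subsets were nonempty, it would have a minimal element $Z$; this $Z$ is not irreducible, so $Z=Z_1\cup Z_2$ with $Z_i\subsetneq Z$ closed, and by minimality each $Z_i$ is a finite union of irreducibles, hence so is $Z$, a contradiction. Writing $X=U$ itself as such a finite union and discarding redundant pieces shows $U$ has finitely many irreducible components. (Alternatively one may cite the corresponding Stacks entry.)

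Finally I would transport this finiteness from $U$ to $X$ using \cref{finite}: applied to the open subset $U\subseteq X$, it gives a bijection between the irreducible components of $U$ and those irreducible components $V$ of $X$ with $U\cap V\neq\emptyset$. Since $U$ has only finitely many irreducible components, $U$ therefore meets only finitely many irreducible components of $X$, which is what we needed. There is essentially no hard step here; the only point requiring a little care is invoking \cref{finite} correctly so that "finitely many irreducible components of the open set $U$" really does translate into "finitely many irreducible components of $X$ meeting $U$", rather than being confused with irreducible components of $X$ that happen to lie in $U$.
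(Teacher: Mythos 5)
Your proof is correct and follows the same standard route the paper relies on: the paper's proof is a one-line citation to the corresponding Stacks Project entry, whereas you prove the underlying fact (a Noetherian space has finitely many irreducible components) by Noetherian induction and then localize. Your explicit appeal to \cref{finite} to convert ``finitely many irreducible components of $U$'' into ``$U$ meets finitely many irreducible components of $X$'' --- which is what \cref{top:def_fin_irred} actually requires --- is exactly the detail the paper's citation glosses over, and you handle it correctly.
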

	\begin{proof}
		This is an immediate consequence of \cite{stacks} \href{https://stacks.math.columbia.edu/tag/0052}{Tag 0052} (3).
	\end{proof}
	
	\noindent Finally, we note that locally Noetherian Schemes have an underlying locally topologically noetherian topological space. So overall, we see that for a scheme
	\begin{align*}
		\substack{ \text{locally}\\\text{Noetherian} }\Rightarrow
		\substack{\text{locally topologically}\\\text{ Noetherian}}\Rightarrow \substack{\text{locally finitely many}\\\text{ irreducible components} }\Rightarrow \substack{ \text{locally}\\\text{ connected} }
	\end{align*}
	and for a topological space
	\begin{align*}
		\substack{ \text{locally topologically}\\\text{ Noetherian} }\Rightarrow \substack{ \text{locally finitely many}\\\text{ irreducible components} }\Rightarrow \substack{ \text{locally}\\\text{ connected.} }
	\end{align*}

	\section{Symmetric Tensors and homogeneous polynomials}
	\noindent This section, as well as the following section serve to give some insight into the proof of \cref{gabber:A.6} and to recall some lesser known properties. \\
	More specifically, this section will concern homogeneous polynomials \(\underline{M}\to \underline{N}\) of degree \(n>0\), the corresponding functor of sets of homogeneous polynomials from \(M\) to \(N\) and its representability by a module \(\Gamma^{n} (M)\). We will further investigate how \(\Gamma ^{n}(M)\) is related to \(\mathrm{TS}^{n}(M)\), the module of symmetric tensors.
	\begin{notation}
		For this section, let \(A\) be a ring,  \(M\) an \(A\)-module and denote by \(\underline{M}\) the functor \(A\text{-}\text{Alg}\to \text{Set}, B\mapsto M\otimes_AB\). 
	\end{notation}
	\begin{theorem}\cite[I 1.2]{lazard}\label{lazards_theorem}
		Let \(A\) be a ring. An \(A\)-module \(M\) is flat if and only if it is a filtered colimit of free, finitely generated \(A\)-modules.  
	\end{theorem}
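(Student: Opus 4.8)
The plan is to prove the two implications separately, with essentially all of the work in the ``only if'' direction. For the easy direction, suppose $M=\colim_i F_i$ is a filtered colimit of free finitely generated $A$-modules. Since $-\otimes_A M$ is always right exact, flatness is equivalent to its preserving injections; and for an injection $N'\hookrightarrow N$ we have $N'\otimes_A M=\colim_i(N'\otimes_A F_i)$ and $N\otimes_A M=\colim_i(N\otimes_A F_i)$ because tensor products commute with colimits, each transition map $N'\otimes_A F_i\to N\otimes_A F_i$ is injective because $F_i$ is free and hence flat, and filtered colimits of injections are injections. Hence $N'\otimes_A M\to N\otimes_A M$ is injective and $M$ is flat.

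For the converse the idea is to exhibit a flat $M$ as the tautological colimit of its ``finite free approximations''. Let $\mathcal C=\mathcal C_M$ be the category whose objects are pairs $(A^{\oplus n},x)$ with $x\colon A^{\oplus n}\to M$ an $A$-linear map, and whose morphisms $(A^{\oplus n},x)\to(A^{\oplus m},y)$ are the $A$-linear maps $g\colon A^{\oplus n}\to A^{\oplus m}$ with $y\circ g=x$; taking the $A^{\oplus n}$ to be fixed standard modules makes $\mathcal C$ small. The maps $x$ assemble the forgetful functor $D\colon\mathcal C\to A\text{-Mod}$, $(A^{\oplus n},x)\mapsto A^{\oplus n}$, into a cocone to $M$, yielding a canonical map $\colim_{\mathcal C}D\to M$. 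I would then prove that (i) $\mathcal C$ is filtered and (ii) this canonical map is an isomorphism. Given (i), part (ii) is formal: surjectivity is witnessed by the objects $(A,m)$, $m\in M$, and injectivity follows from filteredness, since if $x(v)=0$ for some $v\in A^{\oplus n}$ then $1\mapsto v$ and the zero map are two parallel morphisms $(A,0)\to(A^{\oplus n},x)$, so some morphism out of $(A^{\oplus n},x)$ coequalizes them and kills $v$, whence the class of $v$ in $\colim_{\mathcal C}D$ vanishes. Combining (i) and (ii), $M\cong\colim_{\mathcal C}D$ is by construction a filtered colimit of free finitely generated $A$-modules, which is what we want.

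The real content is therefore (i). Non-emptiness ($(0,0)\in\mathcal C$) and the existence of a common target $(A^{\oplus(n+m)},x\oplus y)$ for any two objects, via the coordinate inclusions, are immediate and use nothing about $M$. The one substantive axiom is coequalization: given parallel morphisms $u,v\colon(A^{\oplus n},x)\to(A^{\oplus m},y)$, put $d=u-v$, so that $y\circ d=0$; we must find $w\colon A^{\oplus m}\to A^{\oplus p}$ and $z\colon A^{\oplus p}\to M$ with $z\circ w=y$ and $w\circ d=0$. This is exactly where flatness is used, through the equational criterion for flatness, which I would first recall and derive from the definition (for instance via the vanishing of $\mathrm{Tor}_1^A(A/I,M)$ for finitely generated ideals $I$) in the following matrix form: if $M$ is flat and a matrix $P$ over $A$ together with a column vector $\mathbf y$ over $M$ satisfy $P\mathbf y=0$, then there exist a matrix $Q$ over $A$ and a column vector $\mathbf y'$ over $M$ with $\mathbf y=Q\mathbf y'$ and $PQ=0$. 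Taking $P$ to be the transpose of the matrix of $d$ and $\mathbf y$ the tuple of images under $y$ of the standard basis of $A^{\oplus m}$, one reads off $w$ from $Q$ and $z$ from $\mathbf y'$; the relations $\mathbf y=Q\mathbf y'$ and $PQ=0$ then translate, in suitable coordinates, into precisely $z\circ w=y$ and $w\circ d=0$.

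I expect the equational criterion for flatness to be the only genuinely non-formal ingredient and hence the main obstacle: once it is available, setting up $\mathcal C_M$, checking it is filtered, and identifying its colimit with $M$ are all routine. Deriving the criterion itself amounts to unwinding ``$-\otimes_A M$ is exact'' into the statement that every finite family of $A$-linear relations among elements of $M$ factors through a relation among finitely many auxiliary elements, and this lifting statement is the part that requires an actual argument rather than diagram chasing.
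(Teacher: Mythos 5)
Your proposal is correct. Note, however, that the paper does not prove this statement at all: it is quoted as \cite[I 1.2]{lazard} and used as a black box, so there is no in-paper argument to compare against. What you give is the standard modern proof of Lazard's theorem: the easy direction via commutation of $-\otimes_A M$ with filtered colimits and exactness of filtered colimits, and the hard direction by showing that the comma category of finitely generated free modules over $M$ is filtered and has colimit $M$, with flatness entering only through the equational (matrix) criterion to verify the coequalizer axiom. Your translation between the categorical data and the matrix data is right: with $d=u-v$ and $P$ the transpose of the matrix of $d$, the criterion's output $(Q,\mathbf y')$ gives $w$ with matrix $Q^{T}$ and $z$ sending the basis to $\mathbf y'$, and $PQ=0$, $\mathbf y=Q\mathbf y'$ become $w\circ d=0$, $z\circ w=y$. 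The one point you should not gloss over if you write this up in full is the passage from the single-relation equational criterion (which falls out of $\mathrm{Tor}_1^A(A/I,M)=0$ for finitely generated $I$) to the simultaneous, several-relations matrix form; that upgrade is a short induction on the number of rows of $P$, applying the one-row case to the last relation after rewriting the previous ones in terms of the new generators, and it is genuinely where the work sits, exactly as you predict.
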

	
	\noindent This theorem, while being a strong result in itself, yields the following corollaries which will be of importance for our constructions.
	
	\begin{corollary}\cite[Exp. XVII 5.5.1.2]{SGA43}\label{SGA 5.5.1.2}
		The functor \(\colim{}{}\) is an equivalence of the category of Ind-objects of free, finitely generated \(A\)-modules and the category of flat \(A\)-modules.
	\end{corollary}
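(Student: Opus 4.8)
The plan is to show that the canonical functor \(\colim\colon\mathrm{Ind}(\mathcal C)\to A\text{-Mod}\), where \(\mathcal C\) denotes the full subcategory of free, finitely generated \(A\)-modules and \(\colim\) is the (essentially unique) filtered-colimit-preserving extension of the inclusion \(\mathcal C\hookrightarrow A\text{-Mod}\), is fully faithful with essential image the full subcategory of flat modules; it then restricts to an equivalence onto that subcategory, which is the assertion. Recall that an object of \(\mathrm{Ind}(\mathcal C)\) is a filtered diagram \((M_i)_{i\in I}\) in \(\mathcal C\), that \(\colim\) sends it to the ordinary colimit \(\colim_i M_i\) in \(A\text{-Mod}\), and that
\[
\Hom_{\mathrm{Ind}(\mathcal C)}\bigl((M_i)_{i\in I},(N_j)_{j\in J}\bigr)=\lim_{i}\colim_{j}\Hom_A(M_i,N_j).
\]
So there are three things to verify: that \(\colim\) actually lands in flat modules, that every flat module lies in its essential image, and that it is fully faithful.

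The first point is immediate, since free modules are flat and filtered colimits of flat modules are flat (filtered colimits are exact). The second point is exactly the content of \cref{lazards_theorem}, which says that a flat \(A\)-module is a filtered colimit of free, finitely generated \(A\)-modules; this is the only ingredient of the proof that is not pure category theory.

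For full faithfulness I would argue formally, using that every object \(M=A^{n}\) of \(\mathcal C\) is a compact object of \(A\text{-Mod}\): indeed \(\Hom_A(A^{n},-)\cong(-)^{n}\) commutes with filtered colimits because finite products do. Given Ind-objects \((M_i)_{i\in I}\) and \((N_j)_{j\in J}\), one then computes
\[
\Hom_A\bigl(\colim_i M_i,\ \colim_j N_j\bigr)\ \cong\ \lim_i\Hom_A\bigl(M_i,\ \colim_j N_j\bigr)\ \cong\ \lim_i\colim_j\Hom_A(M_i,N_j),
\]
the first isomorphism being the universal property of the colimit in the first argument and the second being compactness of each \(M_i\) applied term by term; since \(\mathcal C\) is a full subcategory the inner hom-sets agree with those in \(\mathcal C\), and the right-hand side is by definition the hom-set in \(\mathrm{Ind}(\mathcal C)\). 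One checks that this composite is precisely the map induced by \(\colim\).

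There is no deep obstacle once \cref{lazards_theorem} is granted; the care that is needed is bookkeeping in the Ind-category — that morphisms genuinely are the \(\lim\colim\) of hom-sets and that the comparison above is the natural map induced by \(\colim\) — together with the harmless set-theoretic remark that the index categories may be taken small. I therefore expect a short write-up whose only substantive steps are the compactness of free finitely generated modules and the naturality of the displayed isomorphisms.
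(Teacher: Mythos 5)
Your argument is correct. The paper itself gives no proof of this corollary --- it is stated as an immediate consequence of \cref{lazards_theorem} with a citation to SGA4$_3$ --- and what you write is precisely the standard derivation that the citation encodes: Lazard's theorem supplies essential surjectivity onto the flat modules, exactness of filtered colimits shows the functor lands there, and full faithfulness is the general fact that \(\mathrm{Ind}(\mathcal C)\to\mathcal D\) is fully faithful whenever \(\mathcal C\) consists of compact objects, applied to the finitely generated free modules. Nothing is missing beyond the bookkeeping you already flag.
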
 
	
	\begin{corollary}\cite[I 1.5]{lazard}\label{SGA 5.5.1.3}
		Let \(A,B\) be two rings. Then any functor \(T\colon A\text{-}\mathrm{Mod}\to B\text{-}\mathrm{Mod}\) which maps free, finitely generated modules to flat modules and commutes with filtered colimits, maps flat modules to flat modules.
	\end{corollary}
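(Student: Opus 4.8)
The plan is to feed a flat \(A\)-module through Lazard's structure theorem and push the resulting presentation across \(T\). First I would apply \cref{lazards_theorem}: if \(M\) is a flat \(A\)-module, then \(M \cong \colim_{i\in I} F_i\) for some filtered category \(I\) and some diagram \(i\mapsto F_i\) of free, finitely generated \(A\)-modules. Applying the functor \(T\) to this diagram yields a diagram \(i\mapsto T(F_i)\) over the same index category \(I\), hence still filtered, and since \(T\) commutes with filtered colimits we get \(T(M)\cong \colim_{i\in I} T(F_i)\). By hypothesis each \(T(F_i)\) is a flat \(B\)-module, so \(T(M)\) is a filtered colimit of flat \(B\)-modules.

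The remaining point is that a filtered colimit of flat modules is flat. I would argue this directly: for any \(B\)-module \(N\), tensoring commutes with colimits, so \(N\otimes_B T(M)\cong \colim_{i\in I}\bigl(N\otimes_B T(F_i)\bigr)\) functorially in \(N\); each functor \(N\mapsto N\otimes_B T(F_i)\) is exact because \(T(F_i)\) is flat, and a filtered colimit of exact functors into \(B\text{-}\mathrm{Mod}\) is again exact, since filtered colimits of modules are exact. Therefore \(N\mapsto N\otimes_B T(M)\) is exact, i.e.\ \(T(M)\) is flat. Alternatively, this last step can be packaged using \cref{SGA 5.5.1.2}: the functor \(T\) sends the Ind-object \((F_i)_i\) representing \(M\) to the Ind-object \((T(F_i))_i\), whose terms are flat and hence (by \cref{lazards_theorem} again) themselves filtered colimits of free, finitely generated modules, so that its colimit \(T(M)\) corresponds under the equivalence to a flat \(B\)-module.

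I do not anticipate a real obstacle. The only ingredient not already recorded in the excerpt is the stability of flatness under filtered colimits, and the one place to be careful is the observation that applying \(T\) does not change the index category, so that filteredness of the presenting diagram is automatically preserved and the isomorphism \(T(\colim_i F_i)\cong\colim_i T(F_i)\) is legitimately an instance of the hypothesis on \(T\).
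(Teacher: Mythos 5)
Your argument is correct and is exactly the intended derivation: the paper presents this as a corollary of \cref{lazards_theorem} without writing out details, and the standard proof is precisely your chain (write \(M\) as a filtered colimit of free finitely generated modules, push through \(T\) using commutation with filtered colimits, and invoke that a filtered colimit of flat modules is flat via exactness of filtered colimits and commutation of tensor with colimits). Your attention to the fact that the index category is unchanged, so filteredness is preserved, is the right point to flag, and nothing further is needed.
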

	
	\noindent With this, we can give the relevant definitions.
	
	\begin{definition}\cite[Exp. XVII 5.5.2.1]{SGA43}
		Let \(M\) be an \(A\)-module and \(\mathfrak{S}_n\) the permutation group of \(n>0\) elements. Then \(\mathfrak{S}_n\) acts on \(\otimes_A^nM \) by permuting the entries of the elements\footnote{In other words, for \(\sigma\in \mathfrak{S}_n \) we have \(\sigma(m_1\otimes\dots\otimes m_n)=m_{\sigma (1)}\otimes\dots\otimes m_{\sigma (n)}\).}. We denote
		\begin{align*}
			\mathrm{TS}^{n}(M)=\mathrm{TS}_A^d(M)\coloneqq (\otimes _A^{d}M)^{\mathfrak{S}_n}
		\end{align*}
		and call it the \emph{module of symmetric tensors (of degree \(n\))}.
	\end{definition} 
	
	\begin{remark}
		If \(B\) is an \(A\)-algebra, then \(\mathrm{TS}^{n}(B)\) is a sub-\(A\)-algebra of \(\otimes ^{n}_AB\).
	\end{remark}
	
	\begin{definition}\cite[Exp. XVII 5.5.2.2]{SGA43}\label{roby I 2}
		Let \(A\) be a ring and \(M, N\) be \(A\)-modules. A \emph{polynomial map} \(f\) from \(M\) to \(N\) is a morphism of functors \(\underline{M}\to \underline{N}\). A polynomial map \(f\colon \underline{M}\to \underline{N}\) is said to be \emph{homogeneous of degree} \(n\) if for any \(A\)-algebra \(B\), any element \(\lambda \in B\) and any element \(m\in \underline{M}(B)\), we have \(f(\lambda m)=\lambda ^{n}f(m)\). Homogeneous polynomials of degree \(n\) are also called \(n^{\mathrm{fold}}\) \emph{maps} and we denote the set of homogeneous polynomials of degree \(n\) from \(M\) to \(N\) by
		\begin{align*}
			\Hom n^{\mathrm{fold}}{\left(M,N\right)}.
		\end{align*}
		More generally, if \((M_i)_{i\in I}\) is a finite family of \(A\)-modules and \(\bm{n}=(n_i)_{i\in I}\) is a family of integers, a \emph{multihomogeneous polynomial map of multidegree} \(\bm{n}\) from the modules \(M_i\) to \(N\) is a polynomial \(f\colon \underline{\bigoplus_{i\in I} M_i}\to \underline{N}\) for which it again holds that for any base change \(A\to A'\) and elements \(a_i\in A', x_i\in M_i\), we have 
		\begin{align*}
			f_{A'}((a_ix_i)_{i\in I})=\prod_{i\in I}a_i^{n_i}f_{A'}((x_i)_{i\in I}).
		\end{align*}
		We denote the set of multihomogeneous polynomials of degree \(\bm{n}\) from \((M_i)_{i\in I}\) to \(N\) by \[\Hom\bm{n}^{\mathrm{fold}}{\left((M_i)_{i\in I},N\right)}.\]
	\end{definition}
	
	\begin{def+prop}\cite[IV 1]{roby}\cite[Exp. XVII 5.5.2.3]{SGA43}\label{roby IV 1}
		Let \(A\) be a ring and \(M\) be an \(A\)-module. The covariant functor
		\begin{align*}
			N\mapsto \Hom n^{\mathrm{fold}}{\left(M,N\right)}
		\end{align*}
		is representable by an \(A\)-module, denoted by \(\Gamma ^{n}(M)\), \(\Gamma ^{n}_A(M)\), or simply \(\Gamma ^{n}M\) and is equipped with a map \(\gamma ^{n}\colon M\to \Gamma ^{n}(M)\), called the \emph{universal} \(n^{\mathrm{fold}}\) \emph{(polynomial) map}, which is homogeneous of degree \(n\) with the universal property that for any \(f\in \Hom n^{\mathrm{fold}}{\left(M,N\right)}\), there is a linear map \(\tilde{f}\colon \Gamma ^{n}(M)\to N\). Given \((M_i)_{i\in I}\) and \(\bm{n}\) as in \cref{roby I 2}, the covariant functor
		\begin{align*}
			N\mapsto \Hom\bm{n}^{\mathrm{fold}}{\left((M_i)_{i\in I},N\right)}
		\end{align*}
		is representable by the module \(\otimes _{i\in I} \Gamma ^{n_i}(M_i)\), and is equipped with a map \((m_i)_{i\in I}\mapsto \otimes _{i\in I}\gamma ^{n_i}(m_i)\) called \emph{universal multihomogeneous polynomial map (of multidegree} \(\bm{n}\)\emph{)}.	
	\end{def+prop}
	\begin{proof}
		See \cite{roby} chapters III and IV.
	\end{proof}
	
	\begin{remark}\label{SGA 5.5.2.4}
		Both \(\mathrm{TS}^{d}\) and \(\Gamma ^{n}\) commute with filtered colimits (for the latter, see \cite[Proposition IV.6]{roby}). Further, they map free modules to free modules and hence, by \cref{SGA 5.5.1.2}, flat modules to flat modules.
	\end{remark}
	
	\begin{def+prop}\cite[Exp. XVII 5.5.2.5]{SGA43}\label{SGA43 5.5.2.5}
		The \(n^{\mathrm{fold}}\) map from \(M\) to \(\mathrm{TS}^{n}(M)\) given by \(m\mapsto m^{\otimes n}\) defines an \(A\)-linear map \(\varphi \colon \Gamma ^{n}(M)\to \mathrm{TS}^{n}(M)\), called \emph{canonical morphism}, for which \(\varphi (\gamma ^{n}(m))=m^{\otimes n}\). If \(M\) is a free, finitely generated module, then \(\varphi \) is an isomorphism, and hence, this is also the case if \(M\) is flat.
	\end{def+prop}
	\begin{proof}
		In the case where \(M\) is free, see \cite[Proposition IV.5]{roby}. Then, the case where \(M\) is flat follows by \cref{lazards_theorem} and \cref{SGA 5.5.2.4}.
	\end{proof}
	
	\begin{prop}\cite[Exp. XVII 5.5.2.8]{SGA43}\label{SGA 5.5.2.8}
		Let \(A\) be a ring and \(M_1,M_2\) \(A\)-modules. If \(u\colon M_1\times M_2\to N\) is a bilinear map, then \(\gamma ^{n}\circ u\) is a bihomogeneous polynomial map of bidegree \((n,n)\) and thus, defines by \cref{roby IV 1} a map \(u^{n}\colon \Gamma ^{n}(M_1)\otimes_A\Gamma ^{n}(M_2)\to \Gamma ^{n}(N)\) for which \(u^{n}(\gamma ^{n}(x)\otimes \gamma ^{n}(y))=\gamma ^{n}(u(x,y))\) for all \(x\in M_1, y\in M_2\). In particular, if \(B\) is an \(A\)-algebra, the product \(B\times B\to B\) induces a product on \(\Gamma ^{n}(B)\). This product makes \(\Gamma ^{n}(B)\) into an \(A\)-algebra, and the canonical map \(\Gamma ^{n}(B)\to \mathrm{TS}^{n}(B)\) is a homomorphism of algebras.
	\end{prop}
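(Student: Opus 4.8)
The plan is to verify the hypotheses of \cref{roby IV 1}, extract the \(A\)-linear map \(u^n\) from its universal property, and then deduce every assertion about the ring structure (and the compatibility with \(\varphi\)) from the \emph{uniqueness} half of that same universal property.

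First I would check that \(\gamma^n\circ u\) is a bihomogeneous polynomial map of bidegree \((n,n)\) from \((M_1,M_2)\) to \(\Gamma^n(N)\). It is a polynomial map — a natural transformation \(\underline{M_1\oplus M_2}\to\underline{\Gamma^n(N)}\) — because \(u\) is natural under base change and \(\gamma^n\) is itself a polynomial map, so their composite is one. For the multidegree: given an \(A\)-algebra \(A'\), scalars \(a_1,a_2\in A'\) and \(x_i\in M_i\otimes_AA'\), bilinearity of \(u_{A'}\) gives \(u_{A'}(a_1x_1,a_2x_2)=a_1a_2\,u_{A'}(x_1,x_2)\), and then homogeneity of degree \(n\) of \(\gamma^n\) yields \(\gamma^n(u_{A'}(a_1x_1,a_2x_2))=(a_1a_2)^n\gamma^n(u_{A'}(x_1,x_2))=a_1^na_2^n\,\gamma^n(u_{A'}(x_1,x_2))\). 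By the multihomogeneous half of \cref{roby IV 1}, bihomogeneous polynomial maps of bidegree \((n,n)\) out of \((M_1,M_2)\) are represented by \(\Gamma^n(M_1)\otimes_A\Gamma^n(M_2)\) with universal map \((m_1,m_2)\mapsto\gamma^n(m_1)\otimes\gamma^n(m_2)\); hence \(\gamma^n\circ u\) corresponds to a unique \(A\)-linear map \(u^n\colon\Gamma^n(M_1)\otimes_A\Gamma^n(M_2)\to\Gamma^n(N)\), and evaluating the defining identity at the \(A\)-points \((x,y)\) gives \(u^n(\gamma^n(x)\otimes\gamma^n(y))=\gamma^n(u(x,y))\).

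Next I would take \(M_1=M_2=N=B\) and \(u=\mu\) the multiplication of \(B\), obtaining \(\mu^n\colon\Gamma^n(B)\otimes_A\Gamma^n(B)\to\Gamma^n(B)\), which I declare to be the product, with \(\gamma^n(1_B)\) as identity element. Each ring axiom is then an equality of \(A\)-linear maps between tensor powers of \(\Gamma^n(B)\), and I would obtain it from the uniqueness clause of \cref{roby IV 1}. For associativity, \(\Gamma^n(B)^{\otimes 3}\) represents trihomogeneous polynomial maps of tridegree \((n,n,n)\) out of \((B,B,B)\) via \((x,y,z)\mapsto\gamma^n(x)\otimes\gamma^n(y)\otimes\gamma^n(z)\), and both \(\mu^n\circ(\mu^n\otimes\id)\) and \(\mu^n\circ(\id\otimes\mu^n)\) send this universal map to \((x,y,z)\mapsto\gamma^n(xyz)\) (using associativity of \(\mu\)), hence coincide. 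In the same way, \(\mu^n\) and \(\mu^n\) composed with the transposition of the two tensor factors both induce \((x,y)\mapsto\gamma^n(xy)\) when \(B\) is commutative, which gives commutativity of \(\Gamma^n(B)\); and the endomorphism \(\xi\mapsto\mu^n(\gamma^n(1_B)\otimes\xi)\) of \(\Gamma^n(B)\), composed with \(\gamma^n\), equals \(\gamma^n\), the universal \(n^{\mathrm{fold}}\) map, so by uniqueness it is \(\id_{\Gamma^n(B)}\); thus \(\gamma^n(1_B)\) is a two-sided unit. The delicate point is that "polynomial map" means a natural transformation on all \(A\)-algebras, not merely a map on \(A\)-points, so these equalities of induced polynomial maps, transparent though they are on elements \(\gamma^n(x)\), must be read as equalities of natural transformations; this is legitimate because the formulas involved are universal — equivalently, because \(\Gamma^n\) is compatible with base change, or because the elements \(\gamma^n(x)\) generate \(\Gamma^n(B)\) as an \(A\)-module by polarization — but it is the step I would be most careful to phrase correctly.

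Finally, for the canonical map \(\varphi\colon\Gamma^n(B)\to\mathrm{TS}^n(B)\) of \cref{SGA43 5.5.2.5}, characterized by \(\varphi(\gamma^n(m))=m^{\otimes n}\): it sends the unit \(\gamma^n(1_B)\) to \(1_B^{\otimes n}\), the unit of \(\mathrm{TS}^n(B)\subseteq\otimes_A^nB\), and for multiplicativity I would compare the two \(A\)-linear maps \(\varphi\circ\mu^n\) and \(\mu_{\mathrm{TS}^n(B)}\circ(\varphi\otimes\varphi)\) from \(\Gamma^n(B)\otimes_A\Gamma^n(B)\) to \(\mathrm{TS}^n(B)\): composed with the universal bihomogeneous map, the first gives \((x,y)\mapsto(xy)^{\otimes n}\) and the second gives \((x,y)\mapsto x^{\otimes n}\cdot y^{\otimes n}\), and these agree because \(b\mapsto b^{\otimes n}\) is a ring homomorphism \(B\to\otimes_A^nB\); by uniqueness in \cref{roby IV 1} the two maps coincide, so \(\varphi\) is an algebra homomorphism. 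I expect the main obstacle to be exactly the naturality bookkeeping flagged above — ensuring that equalities checked "on generators \(\gamma^n(x)\)" really are equalities of the natural transformations that represent them — together with keeping straight the several instances of \cref{roby IV 1} (for bilinear/bihomogeneous and for trilinear/trihomogeneous data) being invoked.
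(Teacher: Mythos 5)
The paper gives no proof of this proposition at all: it is quoted directly from SGA~4\(_3\), Exp.\ XVII 5.5.2.8 (with the underlying computations living in chapters III--IV of Roby), and the text moves straight on to the next section. So your argument can only be measured against the source, and it matches it: identify \(\gamma^n\circ u\) as a bihomogeneous polynomial map of bidegree \((n,n)\), use the multihomogeneous half of \cref{roby IV 1} to produce \(u^n\) with \(u^n(\gamma^n(x)\otimes\gamma^n(y))=\gamma^n(u(x,y))\), and then obtain associativity, commutativity, the unit \(\gamma^n(1_B)\), and the multiplicativity of \(\varphi\) by showing that in each case the two candidate linear maps correspond to the same multihomogeneous polynomial map and invoking uniqueness in the representability. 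This is correct and complete. One warning about the aside you rightly flag as the delicate point: among the justifications you offer for upgrading ``equality on the elements \(\gamma^n(x)\)'' to equality of linear maps, the claim that the \(\gamma^n(x)\), \(x\in B\), generate \(\Gamma^n(B)\) as an \(A\)-module ``by polarization'' is false in general --- already for \(\Gamma^3\) of a free \(\mathbb{Z}\)-module of rank \(2\), the \(\mathbb{Z}\)-span of the elements \(\gamma^3(x)\) is a proper submodule, since full polarization requires dividing by multinomial coefficients. Your first justification is the correct and sufficient one: the identity \(u^n(\gamma^n(x)\otimes\gamma^n(y))=\gamma^n(u(x,y))\) holds after every base change \(A\to A'\), because it expresses the correspondence of natural transformations under representability; hence the equalities you verify elementwise over arbitrary \(A'\) really are equalities of polynomial maps, and the uniqueness clause of \cref{roby IV 1} closes each ring axiom. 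I would simply delete the polarization remark and keep the naturality argument.
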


	\section{Norms as homogeneous polynomials}
	\begin{definition}\cite[Chapter 3 \textsection 9.1 Definition 1]{bourbaki}
		Let \(A\) be ring, \(A\to B\) an algebra and \(M\) be a \(B\)-module such that, as an \(A\)-module, \(M\) is locally free of finite rank \(d>0\). For all \(b\in B\), let \(m_{b}\) be the endomorphism \(x\mapsto b\cdot x\) of the \(A\)-module \(M\). The trace, determinant and characteristic polynomial of \(m_b\) are called the \emph{trace}, \emph{norm} and \emph{characteristic polynomial of} \(b\) \emph{relative to} \(B\) and are denoted by \(\trace_{M/A}(b)\), \(\norm_{M/A}(b)\) and \(\charpol_{M/A}(b;T)\) respectively.
	\end{definition}
	
	\begin{lemma}\cite[Chapter 3 \textsection 9.1 p.542]{bourbaki}\label{algebra:bourbaki_norm_base_change}
		Let \(A\) be a ring, \(A'\) and \(B\) two \(A\)-algebras and \(M\) a \(B\)-module such that, as an \(A\)-module, \(M\) is locally free of rank \(d>0\). Further, let \(M'=M\otimes_AA'\) and \(B'=B\otimes_AA' \) so that \(M'\) has a \(B'\)-module structure. Let \(b\in B\), then 
		\begin{enumerate}[label=(\roman*)]
			\item \(\trace_{M'/A'}(b\otimes 1) =\trace_{M/A}(b)\cdot 1\),
			\item \(\norm_{M'/A'}(b\otimes 1) =\norm_{M/A}(b)\cdot 1\),
			\item \(\charpol_{M'/A'}(b\otimes 1;T) =\charpol_{M/A}(b;T)\cdot 1\).
		\end{enumerate}
		In particular, if we take \(A'=A[T]\) and \(M=B\), then 
		\begin{align*}
			\charpol_{B/A}(b;T)=\norm_{B[T]/A[T]}(T-b).
		\end{align*}
	\end{lemma}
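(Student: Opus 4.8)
The plan is to deduce all three identities from a single fact: for an endomorphism $u$ of a finite locally free $A$-module $N$, the trace, determinant and characteristic polynomial of $u$ are compatible with base change along any ring homomorphism $\varphi\colon A\to A'$. I would prove this fact by reducing to the case where $N$ is free, where it becomes the elementary observation that a ring homomorphism commutes with the universal polynomial formulas defining these invariants — trace is the sum of the diagonal entries, determinant is the Leibniz sum, and consequently every coefficient of the characteristic polynomial is a polynomial with integer coefficients in the matrix entries. It even suffices to prove (iii): writing $\charpol_{M/A}(b;T)=T^{d}-\trace_{M/A}(b)\,T^{d-1}+\dots+(-1)^{d}\norm_{M/A}(b)$ and comparing the coefficients of $T^{d-1}$ and the constant terms on both sides of (iii) recovers (i) and (ii).

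Here are the reductions in more detail. Pick a cover $\spec A=\bigcup_i D(f_i)$ with $M_{f_i}\cong A_{f_i}^{d}$. Since the $f_i$ generate the unit ideal of $A$, their images $\varphi(f_i)$ generate the unit ideal of $A'$, so the $D(\varphi(f_i))$ cover $\spec A'$, and $M'_{\varphi(f_i)}\cong M_{f_i}\otimes_{A_{f_i}}A'_{\varphi(f_i)}\cong (A'_{\varphi(f_i)})^{d}$; thus $M'=M\otimes_AA'$ is $A'$-locally free of rank $d$ and the invariants of $b\otimes 1$ relative to $B'$ are defined. Under the identification $M'=M\otimes_AA'$, multiplication by $b\otimes 1$ on the $A'$-module $M'$ is precisely $m_b\otimes_A\id_{A'}$, the base change of the endomorphism $m_b$; hence the left-hand sides of (i)--(iii) are the trace, determinant and characteristic polynomial of a base-changed endomorphism, and the reduction above applies.

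To prove (iii), restrict to $D(f_i)$: there $m_b$ is given by a matrix $P_i$ over $A_{f_i}$, and by definition $\charpol_{M/A}(b;T)$ is the unique element of $A[T]$ restricting to $\det(T I_d-P_i)\in A_{f_i}[T]$ for every $i$. After base change, $m_{b\otimes 1}$ on $M'_{\varphi(f_i)}$ is given by the matrix obtained from $P_i$ by applying $A_{f_i}\to A'_{\varphi(f_i)}$ entrywise; call it $\varphi(P_i)$. The Leibniz formula exhibits $\det(T I_d-P)$ as a polynomial with integer coefficients in the entries of $P$, and the ring homomorphism $A_{f_i}[T]\to A'_{\varphi(f_i)}[T]$ extending $A_{f_i}\to A'_{\varphi(f_i)}$ by $T\mapsto T$ respects such expressions, so $\det(T I_d-\varphi(P_i))$ is the image of $\det(T I_d-P_i)$. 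Since base change commutes with the localizations involved, these chartwise identities patch to $\charpol_{M'/A'}(b\otimes 1;T)=\charpol_{M/A}(b;T)\cdot 1$, which is (iii).

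Finally, for the "in particular" claim, take $A'=A[T]$ with the inclusion $A\hookrightarrow A[T]$ and $M=B$, regarded as a module over itself and $A$-locally free of rank $d$; then $M'=B'=B[T]$, which is $A[T]$-locally free of rank $d$. The element $T-b\in B[T]$ has multiplication endomorphism $m_{T-b}=T\cdot\id_{B[T]}-m_b$, and on the chart $D(f_i)$ this is the matrix $T I_d-P_i$, with $P_i$ the same matrix as above (now with entries in $A_{f_i}[T]$). Therefore $\norm_{B[T]/A[T]}(T-b)=\det_{A[T]}(m_{T-b})$ restricts to $\det(T I_d-P_i)$ on each chart, so by the defining property of $\charpol_{B/A}(b;T)$ it equals $\charpol_{B/A}(b;T)$. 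The one place calling for care in all of this is the bookkeeping in the locally free (rather than free) setting — making sure the trivializing cover, the gluing data defining trace, norm and characteristic polynomial, and the base-change functor are mutually compatible so that the chartwise identities really glue; the matrix computations themselves are immediate.
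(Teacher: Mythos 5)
Your argument is correct and complete: the paper itself gives no proof of this lemma (it is quoted from Bourbaki, Algebra, Ch.~3 \S 9.1), and your reduction --- identify $m_{b\otimes 1}$ with the base change $m_b\otimes_A\id_{A'}$, trivialize on a cover $\spec A=\bigcup_i D(f_i)$, and use that trace, determinant and the coefficients of $\det(T I_d-P)$ are universal integer polynomials in the entries of $P$, hence commute with any ring homomorphism --- is exactly the standard argument underlying the cited reference, including the derivation of (i) and (ii) from the $T^{d-1}$ and constant coefficients of (iii) and the identification $m_{T-b}=T\cdot\id_{B[T]}-m_b$ for the final claim. No gaps.
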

	
	\noindent With the definition out of the way, we turn our attention toward establishing a connection between norms and homogeneous polynomials, and hence, by \cref{SGA43 5.5.2.5}, between norms and symmetric tensors.
	
	\begin{remark}\cite[Exp. XVII 6.3.1]{SGA43}\label{SGA43 6.3.1}
		Let \(A\) be a commutative ring. If \(L\) and \(M\) are locally free \(A\)-modules of finite rank, the map \(\Hom_{A}{\left(L,M\right)}\to \Hom_{A}{\left(\bigwedge ^{m}L,\bigwedge ^{m}M\right)}\) is compatible with base change and \(m^{\mathrm{fold}}\) (cf. \cref{roby I 2}). By \cref{roby IV 1}, it defines a morphism
		\(\bigwedge ^{m}\colon \Gamma ^{m}\Hom_{A}{\left(L,M\right)}\to \Hom_{A}{\left(\bigwedge ^{m}L,\bigwedge ^{m}M\right)}\)
		such that, after any base change \(A\to A'\), for \(u\in \Hom_{A'}{\left(L\otimes_AA',M\otimes_AA'\right)}\), we have \(\bigwedge ^{m}(\gamma ^{m}(u))=\bigwedge ^{m}(u)\).
		Since further \(\Hom_{A}{\left(L,M\right)}\) is flat (being a vector bundle), by \cref{SGA43 5.5.2.5} this morphism is identified with a morphism \(\bigwedge ^{m}\colon \mathrm{TS}^{m}\Hom_{A}{\left(L,M\right)}\to \Hom_{A}{\left(\bigwedge ^{m}L,\bigwedge ^{m}M\right)}\).
	\end{remark}
	
	\begin{corollary}
		Let \(A\to B\) be a locally free algebra of finite rank \(d>0\). The maps \(\operatorname{det}_{\underline{B}/\underline{A}}\colon \underline{B}\to \underline{A}\) and \(\norm_{\underline{B}/\underline{A}}\colon \underline{B}\to \underline{A},\) given respectively by \(\operatorname{det}_{\underline{B}(A')/\underline{A}(A')}\) and \(\norm_{\underline{B}(A')/\underline{A}(A')}\) for \(A'\) an \(A\)-algebra, is a homogeneous polynomial of degree \(d\).
	\end{corollary}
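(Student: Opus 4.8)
The plan is to identify $\det_{\underline{B}/\underline{A}}$ with the top exterior power of the multiplication endomorphism and then to quote \cref{SGA43 6.3.1}. Recall first that for $M$ a locally free $A$-module of rank $d$ and $u\colon M\to M$ an endomorphism one has $\bigwedge^{d}u=\det(u)\cdot\id_{\bigwedge^{d}M}$ inside $\Hom_{A}{\left(\bigwedge^{d}M,\bigwedge^{d}M\right)}$; since $\bigwedge^{d}M$ is an invertible $A$-module this $\Hom$-module is canonically identified with $A$ (the identity going to $1$), and under this identification $\bigwedge^{d}u$ corresponds to $\det(u)$. Taking $M=B$, regarded as a locally free $A$-module of rank $d$, and $u=m_{b}$ the multiplication by $b\in B$, this gives $\bigwedge^{d}(m_{b})=\det(m_{b})=\norm_{B/A}(b)=\det_{B/A}(b)$, the last two equalities being the definitions of the norm and of $\det_{B/A}$; and the same chain of identities holds after every base change $A\to A'$. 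Hence $\det_{\underline{B}/\underline{A}}=\norm_{\underline{B}/\underline{A}}$ as morphisms of functors $\underline{B}\to\underline{A}$, and it suffices to prove that one of them is homogeneous of degree $d$.

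Next I would factor this morphism of functors as a composite. The $A$-linear map $B\to\Hom_{A}{\left(B,B\right)}$, $b\mapsto m_{b}$, induces a morphism of functors $\underline{B}\to\underline{\Hom_{A}{\left(B,B\right)}}$; here $\underline{\Hom_{A}{\left(B,B\right)}}(A')=\Hom_{A'}{\left(B\otimes_{A}A',B\otimes_{A}A'\right)}$ because $B$ is locally free of finite rank and therefore $\Hom_{A}{\left(B,B\right)}$ commutes with base change, and the map at level $A'$ is again $b'\mapsto m_{b'}$. Being induced by an $A$-linear map, this morphism of functors is a polynomial map, homogeneous of degree $1$ and compatible with all base changes. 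By \cref{SGA43 6.3.1}, applied with $L=M=B$ and $m=d$, the assignment $u\mapsto\bigwedge^{d}u$ defines a morphism of functors $\underline{\Hom_{A}{\left(B,B\right)}}\to\underline{\Hom_{A}{\left(\bigwedge^{d}B,\bigwedge^{d}B\right)}}=\underline{A}$ which is homogeneous of degree $d$ and compatible with base change. Composing, $\det_{\underline{B}/\underline{A}}$ is the composite of a polynomial map homogeneous of degree $1$ with one homogeneous of degree $d$, hence is itself a polynomial map homogeneous of degree $d$: concretely, for any $A$-algebra $A'$, $\lambda\in A'$ and $b\in\underline{B}(A')$ one has $\det(m_{\lambda b})=\det(\lambda\,m_{b})=\bigwedge^{d}(\lambda\,m_{b})=\lambda^{d}\bigwedge^{d}(m_{b})=\lambda^{d}\det(m_{b})$.

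I do not expect a real obstacle; the only point that needs a little care is the functoriality in the base of the identification $\bigwedge^{d}(m_{b})=\det(m_{b})=\norm_{B/A}(b)$, that is, checking that the canonical isomorphism $\Hom_{A}{\left(\bigwedge^{d}B,\bigwedge^{d}B\right)}\cong A$ is compatible with base change. This is immediate: $\bigwedge^{d}B$ is again locally free of finite rank, $\bigwedge^{d}$ commutes with base change (as recorded in \cref{SGA43 6.3.1}), and $\Hom$ between locally free modules of finite rank commutes with base change, so the identification is natural in $A$. Everything else is formal manipulation of morphisms of functors, using that a composite of homogeneous polynomial maps of degrees $p$ and $q$ is homogeneous of degree $pq$.
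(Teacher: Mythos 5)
Your proof is correct and follows essentially the same route as the paper: apply \cref{SGA43 6.3.1} with $L=M=B$ and $m=d$, identify $\Hom_{A}{\left(\bigwedge^{d}B,\bigwedge^{d}B\right)}$ with $A$, and observe that the norm is the restriction of the determinant to the multiplication endomorphisms $x\mapsto bx$. The paper states this in one line; you have merely filled in the details (the factorization through the linear map $b\mapsto m_{b}$ and the base-change compatibility of the identifications), all of which are accurate.
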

	\begin{proof}
		This follows immediately from \cref{SGA43 6.3.1} choosing \(M=L=B\) and \(m=d\) and noting that \(\norm_{\underline{B}/\underline{A}}\) is simply a restriction \(\operatorname{det}_{\underline{B}/\underline{A}}\) to morphisms \(\underline{B}(A')\to \underline{B}(A'), x\mapsto bx\) for \(A'\) an \(A\)-algebra and \(b\in \underline{B}(A')\).
	\end{proof}
	
	\begin{prop}\cite[Exp. XVII 6.3.1]{SGA43}\label{SGAProp6.3.1}
		Let \(A\to B\) be a locally free algebra of finite rank \(d>0\). As the norm is a homogeneous polynomial, we obtain an \(A\)-linear map 
		\begin{align*}
			\nu \colon \Gamma ^{d}B=\mathrm{TS}^{d}(B)\to A,\, b^{\otimes d}\mapsto \norm_{B/A}(b).
		\end{align*}
		Now, this map \(\nu\) is not just \(A\)-linear but even a homomorphism of algebras.
	\end{prop}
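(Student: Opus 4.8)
The plan is to exploit the universal properties of $\Gamma^{d}$ recorded in \cref{roby IV 1} and \cref{SGA 5.5.2.8} rather than to argue on a set of module generators: $\Gamma^{d}(B)$ is \emph{not} spanned as an $A$-module by the elements $\gamma^{d}(b)$, $b\in B$, so the multiplicativity of $\nu$ cannot be checked on those alone. The one external input needed is the multiplicativity of the norm: for the locally free $A$-algebra $B$ of rank $d$ one has $m_{xy}=m_{x}\circ m_{y}$ and $m_{1}=\id$ as endomorphisms of the $A$-module $B$, so taking determinants gives $\norm_{B/A}(xy)=\norm_{B/A}(x)\norm_{B/A}(y)$ and $\norm_{B/A}(1)=1$, and by \cref{algebra:bourbaki_norm_base_change} these identities persist for $B\otimes_{A}A'$ over $A'$ after every base change $A\to A'$. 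Recall also that by the corollary preceding the proposition $b\mapsto\norm_{B/A}(b)$ is a homogeneous polynomial of degree $d$, so that $\nu$ is, by its very definition via \cref{roby IV 1}, the $A$-linear map characterised by $\nu(\gamma^{d}(b))=\norm_{B/A}(b)$, compatibly with base change.

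For multiplicativity of $\nu$ I would compare the two $A$-linear maps $\nu\circ u^{d}$ and $\mu_{A}\circ(\nu\otimes\nu)$ from $\Gamma^{d}(B)\otimes_{A}\Gamma^{d}(B)$ to $A$, where $u^{d}\colon\Gamma^{d}(B)\otimes_{A}\Gamma^{d}(B)\to\Gamma^{d}(B)$ is the multiplication of \cref{SGA 5.5.2.8} and $\mu_{A}\colon A\otimes_{A}A\xrightarrow{\ \sim\ }A$. By \cref{roby IV 1} the functor $N\mapsto\Hom\bm{n}^{\mathrm{fold}}{\left((B,B),N\right)}$ with $\bm{n}=(d,d)$ is represented by $\Gamma^{d}(B)\otimes_{A}\Gamma^{d}(B)$, so two $A$-linear maps out of the latter into $A$ agree as soon as their compositions with the universal bihomogeneous map $(x,y)\mapsto\gamma^{d}(x)\otimes\gamma^{d}(y)$ agree as polynomial maps, i.e.\ over every $A$-algebra $A'$. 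Composing $\nu\circ u^{d}$ with it gives $(x,y)\mapsto\nu(\gamma^{d}(xy))=\norm_{B/A}(xy)$, while composing $\mu_{A}\circ(\nu\otimes\nu)$ with it gives $(x,y)\mapsto\norm_{B/A}(x)\norm_{B/A}(y)$; these two polynomial maps coincide by the multiplicativity of the norm noted above. Hence $\nu\circ u^{d}=\mu_{A}\circ(\nu\otimes\nu)$, i.e.\ $\nu(\xi\eta)=\nu(\xi)\nu(\eta)$ for all $\xi,\eta\in\Gamma^{d}(B)$.

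For the unit: since $B$ is flat, the canonical algebra map $\varphi\colon\Gamma^{d}(B)\to\mathrm{TS}^{d}(B)$ of \cref{SGA 5.5.2.8} is an isomorphism by \cref{SGA43 5.5.2.5}, and it carries $\gamma^{d}(1_{B})$ to $1_{B}^{\otimes d}$, which is the unit of $\mathrm{TS}^{d}(B)$; hence $\gamma^{d}(1_{B})$ is the unit of the algebra $\Gamma^{d}(B)$. (Alternatively, the same representability argument shows that left multiplication by $\gamma^{d}(1_{B})$ on $\Gamma^{d}(B)$ induces the universal $d$-fold map $\gamma^{d}$ and is therefore $\id_{\Gamma^{d}(B)}$.) Consequently $\nu(1)=\nu(\gamma^{d}(1_{B}))=\norm_{B/A}(1_{B})=1$, which together with the previous paragraph shows that $\nu$ is a homomorphism of $A$-algebras.

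The only genuine subtlety, flagged above, is precisely that $\Gamma^{d}(B)$ is not generated by the $\gamma^{d}(b)$, so the verification has to be routed through the representability statement of \cref{roby IV 1}; beyond that, the argument reduces to checking that $\nu$, $u^{d}$, $\varphi$ and $\norm_{B/A}$ are all compatible with arbitrary base change, which is exactly what licenses comparing the induced polynomial maps functorially.
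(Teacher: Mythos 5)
Your proof is correct, and it isolates the right mechanism: since $\Gamma^{d}(B)$ is not spanned by the $\gamma^{d}(b)$, the identity $\nu(\xi\eta)=\nu(\xi)\nu(\eta)$ must be checked by comparing the two linear maps $\nu\circ u^{d}$ and $\mu_A\circ(\nu\otimes\nu)$ through the representability of bihomogeneous polynomial maps by $\Gamma^{d}(B)\otimes_A\Gamma^{d}(B)$, where it reduces to $\norm_{B/A}(xy)=\norm_{B/A}(x)\norm_{B/A}(y)$ over every base change. The paper reaches the same conclusion by a different packaging: it invokes \cref{SGA43 6.3.1} to get the commutativity of the square relating $\Gamma^{m}\Hom_A(L,M)\otimes_A\Gamma^{m}\Hom_A(M,N)\to\Gamma^{m}\Hom_A(L,N)$ to composition of the induced maps on $\bigwedge^{m}$, evaluates on $\gamma^{m}(u)\otimes\gamma^{m}(v)$ to obtain $\bigwedge^{m}(u\circ v)=\bigwedge^{m}(u)\circ\bigwedge^{m}(v)$, and then specializes $L=M=N=B$, $m=d$, identifying $\nu$ with the restriction of $\bigwedge^{d}$ to multiplication operators. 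The underlying justification for why checking on the universal elements suffices is the same representability statement in both arguments; you make it explicit, whereas the paper leaves it inside the citation of \cref{SGA43 6.3.1}. Your route avoids the exterior-power detour entirely and has the additional merit of verifying unitality ($\gamma^{d}(1_B)$ is the unit of $\Gamma^{d}(B)$ via the isomorphism of \cref{SGA43 5.5.2.5}, and $\norm_{B/A}(1_B)=1$), which the paper omits; the paper's route buys the more general composition formula for $\bigwedge^{m}$ of maps between distinct vector bundles. One small imprecision: the persistence of $\norm_{B'/A'}(xy)=\norm_{B'/A'}(x)\norm_{B'/A'}(y)$ for arbitrary $x,y\in B'=B\otimes_AA'$ is not a consequence of \cref{algebra:bourbaki_norm_base_change} (which only treats elements of the form $b\otimes 1$); it holds simply because $B'$ is again locally free of rank $d$ over $A'$ and $m_{xy}=m_x\circ m_y$, so the determinant argument applies verbatim over $A'$. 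This does not affect the validity of the proof.
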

	\begin{proof}
		This is a consequence of the following more general statement:\\
		Let \(A\) be a ring and \(L,M,N\) three vector bundles over \(A\). Then the diagram
		\begin{center}
			\begin{tikzcd}
				\Gamma ^{m}\Hom_{A}{\left(L,M\right)}\otimes_A \Gamma ^{m}\Hom_{A}{\left(M,N\right)}\arrow[r]{}{}\arrow[d]{}{\bigwedge^{m}\otimes \bigwedge ^{m}}& \Gamma ^{m}\Hom_{A}{\left(L,N\right)}\arrow[d]{}{\bigwedge ^{m}}\\
				\Hom_{A}{\left(\bigwedge ^{m}L,\bigwedge ^{m}M\right)}\otimes_A \Hom_{A}{\left(\bigwedge ^{m}M,\bigwedge ^{m}N\right)}\arrow[r]{}{} &\Hom_{A}{\left(\bigwedge ^{m}L,\bigwedge ^{m}N\right)},
			\end{tikzcd}
		\end{center}
		where the horizontal arrows are given by composition (and \cref{SGA 5.5.2.8} for the top horizontal map) is commutative by \cref{SGA43 6.3.1}. 
		
		\noindent Let \(u\in \Hom_{A}{\left(L,M\right)}, v\in \Hom_{A}{\left(M,N\right)}\) and denote by \(c\) the bilinear composition map \[c\colon \Hom_{A}{\left(L,M\right)}\times \Hom_{A}{\left(M,N\right)}\to \Hom_{A}{\left(L,N\right)}.\] By \cref{SGA 5.5.2.8} this induces the linear map \(c^{m}\) for which we have \(c^{m}(\gamma ^{m}(u)\otimes \gamma ^{m}(v))=\gamma ^{m}(c(u,v))\). Further, let \[c'\colon \Hom_{A}{\left(\bigwedge ^{m}L,\bigwedge ^{m}M\right)}\otimes_A \Hom_{A}{\left(\bigwedge ^{m}M,\bigwedge ^{m}N\right)}\to \Hom_{A}{\left(\bigwedge ^{m}L,\bigwedge ^{m}N\right)}\] be the linear map induced by composition. Then with \cref{SGA43 6.3.1} we find 
		\begin{align*}
			\bigwedge ^{m} (c^{m}(\gamma ^{m}(u)\otimes \gamma ^{m}(v)))&=\bigwedge ^{m}(\gamma ^{m}(c(u,v))	=\bigwedge ^{m}(u\circ v)
			\intertext{and}
			c'\left(\bigwedge ^{m}(\gamma ^{m}(u))\otimes \bigwedge ^{m}(\gamma ^{m}(v))\right)&=\bigwedge ^{m}(u)\circ \bigwedge ^{m}(v).  
		\end{align*}
		Hence, we obtain the formula \(\bigwedge ^{m}(u\circ v)=\bigwedge ^{m}(u)\circ \bigwedge ^{m}(v)\). \\
		In our situation, we set \(L=M=N=B\) and \(m=d\) and note that the norm is simply the determinant restricted to endomorphisms of the form \(x\mapsto b\cdot x\) for \(b\in B\) to obtain the desired result.
	\end{proof}
	
	\begin{prop}\label{surjectivity}
		Let \(A\to B\) be a locally free algebra of finite rank \(d>0\). The map \(\nu\colon \mathrm{TS}^{d}(B)\to A\), defined in \cref{SGAProp6.3.1}, is surjective.
	\end{prop}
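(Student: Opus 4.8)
The plan is to exploit the fact, just established in \cref{SGAProp6.3.1}, that $\nu$ is $A$-\emph{linear}: the image of an $A$-linear map $\mathrm{TS}^{d}(B)\to A$ is an $A$-submodule of $A$, i.e.\ an ideal, so it suffices to exhibit a single element of $\mathrm{TS}^{d}(B)$ whose image under $\nu$ is a unit of $A$.

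First I would observe that the element $1_B^{\otimes d}=1_B\otimes\dots\otimes 1_B\in\otimes_A^{d}B$ is fixed by every permutation of the factors, hence lies in $\mathrm{TS}^{d}(B)=(\otimes_A^{d}B)^{\mathfrak{S}_d}$. Next, by the defining property of $\nu$ recorded in \cref{SGAProp6.3.1} (namely $b^{\otimes d}\mapsto\norm_{B/A}(b)$), we get $\nu(1_B^{\otimes d})=\norm_{B/A}(1_B)$. But multiplication by $1_B$ on $B$ is the identity endomorphism of the $A$-module $B$, which is locally free of rank $d>0$, so its determinant is $1_A$; thus $\nu(1_B^{\otimes d})=1_A$.

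Finally, $A$-linearity of $\nu$ gives, for every $a\in A$, that $a\cdot 1_B^{\otimes d}\in\mathrm{TS}^{d}(B)$ and $\nu(a\cdot 1_B^{\otimes d})=a\cdot\nu(1_B^{\otimes d})=a$. Hence $\nu$ is surjective.

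I do not expect any genuine obstacle here; the only points requiring (trivial) care are that $1_B^{\otimes d}$ is genuinely a symmetric tensor and that the norm of the unit of $B$ is the unit of $A$, both of which are immediate from the definitions. One could alternatively argue Zariski-locally on $\spec(A)$, reducing to the case where $B$ is free over $A$, but this would force one to track the compatibility of $\mathrm{TS}^{d}$ with localization (via its commutation with filtered colimits, \cref{SGA 5.5.2.4}), so the direct argument above is preferable.
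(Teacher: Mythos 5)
Your proposal is correct and is essentially the paper's own proof: the paper likewise notes that $\nu(1_B^{\otimes d})=1_A$ and concludes by $A$-linearity, with your version merely spelling out the details (that $1_B^{\otimes d}$ is a symmetric tensor and that the norm of $1_B$ is the determinant of the identity).
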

	\begin{proof}
		We see that \(\nu(1_B^{\otimes d})=1_A\) and conclude by \(A\)-linearity.
	\end{proof}
	
	\noindent Before we can end this section, we must first give a lemma which may seem out of place but will be needed for the subsequent proposition which will be of great importance for \cref{gabber:A.6}.

	\begin{prop}\label{isolemma}
		Let \(A\to B\) be a locally free algebra of finite rank, \(d>0\) an integer and \(\mathfrak{S}_d\) be the permutation group of \(d\) elements. Then 
		\begin{align*}
			\eta \colon (\otimes _{A[T]}^{d}B[T])^{\mathfrak{S}_d} &\to \mathrm{TS}^{d}(B)[T]=(\otimes _{A}^{d}B)^{\mathfrak{S}_d}[T],\\
			\left(\sum_{j=0}^{n_1}b_j^{1} T^{j}\right)\otimes \dots\otimes \left(\sum_{j=0}^{n_d}b_j^{d} T^{j}\right) &\mapsto \sum_{\beta \colon \left\{1,d\right\}\to \left\{0,n\right\}}^{} T^{\sum_{k=1}^{d}\beta (k)} \stackrel[i=1]{n }{\otimes} b_{\beta (i)}^{i}
		\end{align*}
		is an isomorphism.
	\end{prop}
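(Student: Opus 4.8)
The plan is to observe that $\eta$ is just the canonical base-change isomorphism written out on pure tensors. Since $B[T]=B\otimes_A A[T]$, iterating the base-change isomorphism $(B\otimes_A A[T])\otimes_{A[T]}(-)\cong B\otimes_A(-)$ yields a canonical isomorphism of $A[T]$-modules
\[
\Phi\colon \otimes_{A[T]}^{d}B[T]\ \xrightarrow{\ \sim\ }\ \bigl(\otimes_A^{d}B\bigr)\otimes_A A[T]=\bigl(\otimes_A^{d}B\bigr)[T].
\]
Expanding a pure tensor $\bigl(\sum_j b_j^1T^j\bigr)\otimes\dots\otimes\bigl(\sum_j b_j^dT^j\bigr)$ by $A[T]$-multilinearity shows that $\Phi$ is described on such elements precisely by the formula defining $\eta$; in particular that formula lands in $\bigl(\otimes_A^{d}B\bigr)[T]$ and extends to a well-defined $A[T]$-linear map on all of $\otimes_{A[T]}^{d}B[T]$, namely $\Phi$.

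Then I would check that $\Phi$ is $\mathfrak{S}_d$-equivariant, where $\mathfrak{S}_d$ acts on the target through its permutation action on the first tensor factor $\otimes_A^{d}B$ and trivially on $A[T]$. This is immediate from the pure-tensor formula: applying a permutation to the $d$ factors $\sum_j b_j^iT^j$ permutes the superscripts of the $b_{\beta(i)}^{i}$ appearing in the image while, after the evident reindexing of the summation set $\{\beta\}$, leaving the monomials $T^{\sum_k\beta(k)}$ untouched. Hence $\Phi$ restricts to an isomorphism on $\mathfrak{S}_d$-invariants,
\[
\bigl(\otimes_{A[T]}^{d}B[T]\bigr)^{\mathfrak{S}_d}\ \xrightarrow{\ \sim\ }\ \Bigl(\bigl(\otimes_A^{d}B\bigr)\otimes_A A[T]\Bigr)^{\mathfrak{S}_d}.
\]

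Finally, I would identify the right-hand side with $\mathrm{TS}^{d}(B)[T]$. Writing $A[T]=\bigoplus_{j\ge 0}A\,T^{j}$ gives $\bigl(\otimes_A^{d}B\bigr)\otimes_A A[T]=\bigoplus_{j\ge 0}\bigl(\otimes_A^{d}B\bigr)T^{j}$ as an $\mathfrak{S}_d$-representation, with the group acting on each summand by its standard action on $\otimes_A^{d}B$. An element $\sum_j x_jT^{j}$ of this direct sum is invariant if and only if each $x_j$ lies in $\bigl(\otimes_A^{d}B\bigr)^{\mathfrak{S}_d}=\mathrm{TS}^{d}(B)$, so the invariant submodule is $\mathrm{TS}^{d}(B)\otimes_A A[T]=\mathrm{TS}^{d}(B)[T]$. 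Composing this identification with the isomorphism of the previous step gives exactly $\eta$, which is therefore an isomorphism.

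There is no genuinely hard step here; the content is the bookkeeping of these three identifications. The only point needing a little care is verifying that the explicit formula for $\eta$ coincides with the canonical base-change map $\Phi$ and that $\Phi$ is $\mathfrak{S}_d$-equivariant, i.e.\ the reindexing of the sum over $\beta$ under a permutation of the factors. I note in passing that the hypothesis that $B$ be locally free of finite rank over $A$ is not actually used; should one prefer to avoid the direct-sum argument, one could instead localize on $A$ to reduce to the case where $B$ is free and compare bases, since forming $\mathfrak{S}_d$-invariants commutes with flat base change.
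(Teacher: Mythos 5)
Your proof is correct and follows essentially the same route as the paper: both rest on the base-change isomorphism \(\otimes^{d}_{A[T]}B[T]\cong(\otimes_A^{d}B)\otimes_AA[T]\) followed by the observation that forming \(\mathfrak{S}_d\)-invariants commutes with this base change, the paper citing flatness of \(A[T]\) where you give the equivalent hands-on argument via the decomposition \(A[T]=\bigoplus_j A\,T^j\). Your remarks that the explicit formula is just this canonical map on pure tensors and that local freeness of \(B\) is never used are both accurate.
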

	\begin{proof}
		We note that \(A[T]\) is a flat \(A\)-algebra and that 
		\begin{align*}
			\otimes ^{d}_{A[T]}B[T]\cong \otimes ^{d}_{A[T]} (B\otimes_AA[T])\cong (\otimes_A ^{d}B)\otimes_A A[T],
		\end{align*}
		so the result follows from \cite[Remark 12.28]{GW1}. Further, we note that one can check that the given morphism is indeed the canonical isomorphism.
	\end{proof}
	\noindent Before we can give the proof to end this section, we need one more definition.

	\begin{prop}\label{A.6_helper_prop}
		Let \(A\to B\) be a locally free algebra of finite rank \(d>0\) and consider the co-cartesian (=tensor product) square
		\begin{center}
			\begin{tikzcd}
				(\otimes _A^{d}B)^{\mathfrak{S}_d}\arrow[r]{}{}\arrow[d,two heads]{}{\nu}&\otimes _A^{d}B\arrow[d,two heads]{}{q}\\
				A\arrow[r]{}{}&D\arrow[ul, phantom, "\scalebox{1.2}{\color{black}$\ulcorner$}", very near start, color=black]
			\end{tikzcd}
		\end{center}
		where \(\nu\colon TS^{d}(B)=(\otimes _A^{d}B)^{\mathfrak{S}_d}\to A\) is defined by \(\nu (b^{\otimes d})=\norm_{B/A}(b)\) (cf. \cref{SGAProp6.3.1}). Moreover, define 
		\begin{align*}
			q_i\colon B&\to D\\
			b&\mapsto q(1\otimes\dots 1\otimes b\otimes 1\dots\otimes 1)
		\end{align*} where \(b\) is in the \(i\)-th place. Then \(\charpol_{B/A}(b;T)\) factors as \(\prod_{i=1}^{d}(T-q_i(b))\) in \(D[T]\).
	\end{prop}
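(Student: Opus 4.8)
The plan is to reduce the claim to the base change of the given co-cartesian square along $A\to A[T]$ and there exhibit both sides as the image of a single element of $A[T]$ under the two legs of the square.

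First I would rewrite the right-hand side. Let $q'\colon \otimes_{A[T]}^d B[T]=(\otimes_A^d B)[T]\to D[T]$ be the extension of $q$ along $A\to A[T]$, so that $q_i(b)=q'(1\otimes\cdots\otimes b\otimes\cdots\otimes 1)$ with $b$ in the $i$-th place. Since $T$ is a scalar it may be absorbed into any tensor slot, whence $T-(1\otimes\cdots\otimes b\otimes\cdots\otimes 1)=1\otimes\cdots\otimes(T-b)\otimes\cdots\otimes 1$ with $T-b$ in the $i$-th place; multiplying these identities over $i=1,\dots,d$ inside $\otimes_{A[T]}^d B[T]$ gives
\[
\prod_{i=1}^d\bigl(T-q_i(b)\bigr)=q'\bigl((T-b)^{\otimes d}\bigr)\quad\text{in }D[T].
\]

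Next I would identify $D[T]$. Applying $-\otimes_A A[T]$ to the co-cartesian square and using that pushouts of rings are stable under base change, $D[T]=D\otimes_A A[T]$ is the pushout of $A[T]\leftarrow \mathrm{TS}^d(B)[T]\to (\otimes_A^d B)[T]$. By \cref{isolemma} the canonical map identifies $\mathrm{TS}^d(B)[T]=\mathrm{TS}^d_{A}(B)\otimes_A A[T]$ with $\mathrm{TS}^d_{A[T]}(B[T])=(\otimes_{A[T]}^d B[T])^{\mathfrak{S}_d}$, compatibly with the inclusion into $\otimes^d$; and because the norm is a homogeneous polynomial map and hence compatible with base change (\cref{SGAProp6.3.1}, \cref{SGA43 5.5.2.5}), the left-hand leg $\nu\otimes\id$ is exactly the norm map $\nu_{A[T]}$ of the locally free $A[T]$-algebra $B[T]$. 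Thus $D[T]$ is precisely the square of \cref{A.6_helper_prop} attached to $B[T]/A[T]$; in particular the composite $(\otimes_{A[T]}^d B[T])^{\mathfrak{S}_d}\hookrightarrow \otimes_{A[T]}^d B[T]\xrightarrow{q'} D[T]$ equals the structure map $A[T]\to D[T]$ precomposed with $\nu_{A[T]}$.

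Finally, $(T-b)^{\otimes d}$ is $\mathfrak{S}_d$-invariant, hence lies in $(\otimes_{A[T]}^d B[T])^{\mathfrak{S}_d}$, and by \cref{SGAProp6.3.1} applied over $A[T]$ we get $\nu_{A[T]}\bigl((T-b)^{\otimes d}\bigr)=\norm_{B[T]/A[T]}(T-b)$, which equals $\charpol_{B/A}(b;T)$ by \cref{algebra:bourbaki_norm_base_change}. Combining this with the previous paragraph and the displayed identity, $\prod_{i=1}^d(T-q_i(b))$ is the image in $D[T]$ of $\charpol_{B/A}(b;T)\in A[T]$, which is the assertion. The only delicate point is the bookkeeping in the base-change step: one must check that the isomorphism of \cref{isolemma} respects the algebra structure and the inclusion $\mathrm{TS}^d\hookrightarrow\otimes^d$, and that $\nu$ really base-changes to $\nu_{A[T]}$ rather than to some twist; everything else is a direct computation.
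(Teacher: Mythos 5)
Your proof is correct and follows essentially the same route as the paper's: base-change the square along \(A\to A[T]\), identify \(\nu_{A[T]}\) with \(\nu\) via the isomorphism of \cref{isolemma}, and evaluate on \((T-b)^{\otimes d}\) using \(\norm_{B[T]/A[T]}(T-b)=\charpol_{B/A}(b;T)\). The only cosmetic difference is that you obtain \(q'\bigl((T-b)^{\otimes d}\bigr)=\prod_{i}(T-q_i(b))\) by absorbing \(T\) into the tensor slots, whereas the paper expands \(\eta\bigl((T-b)^{\otimes d}\bigr)\) in elementary symmetric polynomials and then refactors.
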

	\begin{proof}
		Let \(\nu_{A[T]}\colon (\otimes ^{d}_{A[T]}B[T])^{\mathfrak{S}_{d}}\to A[T]\) be the linear map associated to \(\norm_{B[T]/A[T]}\) and let \(\nu'\colon (\otimes _A^{d}B)^{\mathfrak{S}_d}[T]\to A[T]\) be the map where \(\nu\) is applied to the coefficients. Then \(\nu_{A[T]}=\nu'\circ \eta\).
		As \(\nu_{A[T]}((T-b)^{\otimes d})=\norm_{B[T]/A[T]}(T-b)=\charpol_{B/A}(b;T)\), it hence suffices to show that \((T-b)^{\otimes d}\in (\otimes _{A[T]}^{d}B[T])^{\mathfrak{S}_d}\cong (\otimes _A^{d}B)^{\mathfrak{S}_d}[T]\) maps to \(\prod_{i=1}^{d}(T-b_i)\in (\otimes _A^{d}B)[T]\), where \(b_i=1\otimes \dotsm 1\otimes b\otimes 1\dotsm \otimes 1\) with \(b\) in the \(i\)-th place, via \(\eta \) defined in \cref{isolemma}. We see that \(\eta ((T-b)^{\otimes d})= T^{d}-\mathfrak{s}_1(b_1,\dots, b_d)+\dots+(-1)^{d}\mathfrak{s}_d(b_1,\dots, b_d)\) where \(\mathfrak{s}_i\) is the \(i\)-th elementary symmetric polynomial. This factors as \(T^{d}-\mathfrak{s}_1(b_1,\dots, b_d)+\dots+(-1)^{d}\mathfrak{s}_d(b_1,\dots, b_d)=\prod_{i=1}^{d}(T-b_i)\in (\otimes ^{d}_AB)[T]\) as desired, finishing the proof.
	\end{proof}

	\section{Group actions on schemes} 
	
	\begin{def+prop}\cite[I.5 Definition 6]{bourbaki}
		Let \(X\) be a set and \(G\) be a group. An action \(\rho \colon G\times X\to X\) of $G$ on $X$ is called \emph{transitive} if the following three equivalent conditions are satisfied:
		\begin{enumerate}[label=(\roman*)]
			\item For any $a\in X$ it holds that $\orbit(a)=X$,
			\item for any $x,z\in X$ there exists some $g\in G$ such that $\rho (g,x)=z$,
			\item the shear map \(\operatorname{sh}\colon G\times X\to X\times X,(g,x)\mapsto (\rho (g,x),x) \) is surjective. 
		\end{enumerate}
	\end{def+prop}

	\begin{remark}
		The third notion mentioned above generalizes nicely to arbitrary categories once a decision on what "surjective" means is made. We cannot expect the objects of a general category to have a set-structure so simply being surjective on the underlying sets is not a valid condition in many cases. As we are interested in schemes, however, this will be the definition we use, more concretely:
	\end{remark}

	\begin{definition}
		Let \(S\) be a scheme, \(X\) be an \(S\)-scheme and \(G\) an \(S\)-group scheme with a group action \(\rho \colon G\times_ S X\to X\). Then \(\rho \) is said to \emph{act transitively} on \(X\)  if the map \(G\times_S X\to X\times X, (g,x)\mapsto (\rho (g,x),x)\) is surjective on the underlying topological spaces.
	\end{definition}

	\begin{prop}\label{gr_act:tran iff surj}
		Let \(S\) be a scheme, \(X\) be an \(S\)-scheme with structure morphism \(f\colon X\to S\) and \(G\) an \(S\)-group scheme with an action \(\rho \colon G\times_SX\to X\). Then \(\zeta \colon G\times_SX\to X\times_SX\), induced by the projection to \(X\) and \(\rho \), is surjective if and only if for all \(s\in S\), \(G_s\) acts transitively on the corresponding fiber of \(f\).
	\end{prop}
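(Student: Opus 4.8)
The plan is to test surjectivity of $\zeta$ fibre by fibre over $S$. Recall the general fact that an $S$-morphism of schemes $h\colon Z\to W$ is surjective on underlying topological spaces if and only if for every $s\in S$ the induced morphism on fibres $h_s\colon Z_s\to W_s$ is surjective. Indeed, the structure maps to $S$ exhibit $|W|$ as the disjoint union of the subsets $|W_s|$, and likewise $|Z|=\bigsqcup_{s\in S}|Z_s|$ (each $Z_s\to Z$ being a homeomorphism onto the fibre over $s$); since $h$ lies over $S$, it carries $Z_s$ into $W_s$, and no point of $Z$ lying over some $t\neq s$ can be sent into $W_s$. Hence a point $w\in|W_s|$ lies in the image of $h$ precisely when it lies in the image of $h_s$, which gives the equivalence.

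To apply this I first make $\zeta$ into an $S$-morphism: give $G\times_S X$ the structure map $f\circ p_X$ and give $X\times_S X$ the structure map $f\circ p_2$ (second projection followed by $f$); since the $X$-component of $\zeta$ is exactly $p_X$, the map $\zeta$ is then an $S$-morphism. Base change along $\spec\kappa(s)\to S$ commutes with fibre products, so there are canonical isomorphisms $(G\times_S X)_s\cong G_s\times_{\kappa(s)}X_s$ and $(X\times_S X)_s\cong X_s\times_{\kappa(s)}X_s$, and the action $\rho$ base-changes to the induced action $\rho_s\colon G_s\times_{\kappa(s)}X_s\to X_s$ of the $\kappa(s)$-group scheme $G_s$ on $X_s$. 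Chasing these identifications through the universal properties, the fibre morphism $\zeta_s$ is identified with the shear map $(\rho_s,p_{X_s})\colon G_s\times_{\kappa(s)}X_s\to X_s\times_{\kappa(s)}X_s$ of the action $\rho_s$.

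Combining the two steps: $\zeta$ is surjective $\iff$ for all $s\in S$ the fibre $\zeta_s$ is surjective $\iff$ for all $s\in S$ the shear map of the $G_s$-action on $X_s$ is surjective on underlying topological spaces $\iff$ for all $s\in S$ the group $G_s$ acts transitively on $X_s$, the last equivalence being the definition of transitivity recalled above. The only point that requires genuine care is the identification of $\zeta_s$ with the shear map over $\kappa(s)$ — that is, checking that passing to the fibre over $s$ is compatible with the formation of $G\times_S X$, with the action $\rho$, and with the projections. This is a routine but slightly tedious diagram chase with the universal properties of the fibre products involved, and I expect it to be the main (modest) obstacle in writing out the argument in full.
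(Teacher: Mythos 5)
Your argument is correct and is exactly the paper's approach: the paper's (one-line) proof simply invokes the fact that surjectivity of an $S$-morphism can be checked fibrewise, and your identification of $\zeta_s$ with the shear map $G_s\times_{\kappa(s)}X_s\to X_s\times_{\kappa(s)}X_s$ is the same computation carried out in detail. The only point worth noting is that the paper's definition of a transitive action of a group scheme literally writes the target of the shear map as the absolute product $X\times X$; your reading of it as the fibre product over the base (here $\spec\kappa(s)$) is the one that makes the statement correct, and is what the paper evidently intends.
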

	\begin{proof}
		This follows from the fact that surjectivity can be checked on fibers.
	\end{proof}

	\section{Permanence properties of schemes under preimages} 
	\noindent Let \(\textbf{P}\) be a property of schemes and let the following four conditions be denoted by \textlabel{$\dagger$}{dagger}: 
	\begin{itemize}
		\item Let \(f\colon X\to Y\) be an integral surjective morphism of affine schemes, and $G$ the constant \(Y\)-group scheme associated with a finite group \(\tilde{G}\) acting on \(X\) via \(\rho \colon G\times_YX\to X\), such that \(\rho\times_Yp_X\colon G\times_Y X\to X\times_YX\) is surjective. Then, if \(Y\) has \(\textbf{P}\), so does \(X\).
		\item If \(\spec{\left(B\right)}\to \spec{\left(A\right)}\) is a surjective morphism of affine schemes and \(\spec{\left(B\right)}\) has \(\textbf{P}\), then so does \(\spec{\left(A\right)}\).
		\item If a scheme \(X\) has \(\textbf{P}\), then so does every open subscheme \(U\subseteq X\).
		\item If a scheme \(X\) has an open covering \((X_i)_{i\in I}\) such that each \(X_i\) has \(\textbf{P}\), then \(X\) has \(\textbf{P}\).
	\end{itemize}
	
	\noindent The main result of this paper will show that such a property is already stable under taking locally quasi-finite, flat and locally finitely presented preimages.

	\noindent Before we come to the proof, we first consider some examples of properties that fulfil $\dagger$.

	\subsection{Examples of properties stable under flat, quasi-finite and locally finitely presented preimages}\phantom{a}\\ 
	\noindent The three examples we give in this section are:
	\begin{itemize}
		\item The scheme \(X\) is locally connected.
		\item The scheme \(X\) has locally finitely many irreducible components.
		\item The scheme \(X\) is locally topologically noetherian.
	\end{itemize}
	
	\noindent In order to prove that these properties fulfil \ref{dagger}, however, we first need to introduce two lemmas and give a definition to avoid confusion.
	
	\begin{definition}\cite[\href{https://stacks.math.columbia.edu/tag/0406}{Tag 0406}]{stacks}
		Let \(f\colon X\to Y\) be a continuous map of topological spaces.
		\begin{enumerate}[label=(\roman*)]
			\item We say \(f\) is a \emph{strict map} of topological spaces if the induced topology and the quotient topology of \(f(X)\) agree.
			\item We say \(f\) is \emph{submersive} if \(f\) is surjective and strict.
		\end{enumerate}
	\end{definition}
	
	\begin{remark}
		Note that this is not the definition of a submersion of differential manifolds.
	\end{remark}
	
	\begin{lemma}\label{prop:openness}\cite[Proposition A.3]{gabber}
		If $f\colon X\to Y$ is a submersive morphism of schemes, and $G$ the constant \(Y\)-group scheme associated with a group \(\tilde{G}\) acting on \(X\) via \(\rho \colon G\times_YX\to X\), such that the morphism 
		\begin{align*}
			\rho\times_Yp_X\colon G\times_Y X\to X\times _Y X
		\end{align*}
		is surjective, then \(f\) is open.
	\end{lemma}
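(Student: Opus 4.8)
The plan is to deduce openness of $f$ from the observation that, for every open $U\subseteq X$, the $f$-saturated set $f^{-1}(f(U))$ is a union of $\tilde G$-translates of $U$, hence open; submersiveness then converts this into openness of $f(U)$ itself. Indeed, since $f$ is submersive it is surjective and $Y$ carries the quotient topology, so a subset $V\subseteq Y$ is open exactly when $f^{-1}(V)$ is open in $X$; taking $V=f(U)$ reduces the whole lemma to showing that $f^{-1}(f(U))$ is open.

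First I would unwind the constant group scheme. As a $Y$-scheme $G=\coprod_{g\in\tilde G}Y$, so $G\times_YX=\coprod_{g\in\tilde G}X$, and on the $g$-th copy $\rho$ restricts to a morphism $\rho_g\colon X\to X$ which is an automorphism over $Y$: it satisfies $f\circ\rho_g=f$ because $\rho$ is a morphism of $Y$-schemes, and it is invertible with inverse $\rho_{g^{-1}}$ because $\rho$ is a group action. Under the same identification, on the $g$-th copy the morphism $\rho\times_Yp_X$ is $(\rho_g,\id_X)\colon X\to X\times_YX$. Consequently, if $z\in X\times_YX$ lies in the image of $\rho\times_Yp_X$ and the chosen preimage lies in the $g$-th copy, corresponding to $x\in X$, then $p_2(z)=x$ and $p_1(z)=\rho_g(x)$, where $p_1,p_2$ denote the two projections of $X\times_YX$.

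Next I would read off the pointwise content of the surjectivity hypothesis: whenever $x,u\in X$ satisfy $y:=f(x)=f(u)$, there is $g\in\tilde G$ with $\rho_g(u)=x$. To get this, note that $\spec\kappa(x)\times_Y\spec\kappa(u)\cong\spec\bigl(\kappa(x)\otimes_{\kappa(y)}\kappa(u)\bigr)$ is nonempty, since a tensor product over a field of nonzero algebras is nonzero, and it maps to $X\times_YX$ compatibly with the projections; the image $z$ of any of its points therefore satisfies $p_1(z)=x$ and $p_2(z)=u$. Surjectivity of $\rho\times_Yp_X$ gives a preimage of $z$ in some $g$-th copy of $X$, and the description from the previous paragraph forces $u=p_2(z)$ and $x=p_1(z)=\rho_g(u)$. (Equivalently, one may apply \cref{gr_act:tran iff surj} to the constant group scheme $G$ to obtain precisely this fibrewise transitivity.)

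With these in hand the endgame is routine. For an open $U\subseteq X$ one checks $f^{-1}(f(U))=\bigcup_{g\in\tilde G}\rho_g(U)$: the inclusion $\supseteq$ holds because $f\circ\rho_g=f$, and $\subseteq$ is exactly the pointwise statement above. Each $\rho_g$ is a homeomorphism, so every $\rho_g(U)$ is open, whence $f^{-1}(f(U))$ is open and so is $f(U)$; as $U$ was arbitrary, $f$ is open. The one point that deserves care is the passage from the scheme-theoretic surjectivity of $\rho\times_Yp_X$ to the set-theoretic transitivity on fibres: a point of $X\times_YX$ lying over a pair $(x,u)$ need not have residue field $\kappa(x)\otimes_{\kappa(y)}\kappa(u)$, but the argument only uses that such a point exists, i.e.\ that this tensor product does not vanish. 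Everything else is formal.
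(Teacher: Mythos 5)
Your proof is correct and follows essentially the same route as the paper: both establish $f^{-1}(f(U))=\bigcup_{g\in\tilde G}\rho_g(U)$, observe each translate is open since the $\rho_g$ are automorphisms over $Y$, and invoke submersiveness to conclude $f(U)$ is open. The only difference is that you carefully justify the passage from surjectivity of $\rho\times_Yp_X$ to fibrewise transitivity (via nonvanishing of $\kappa(x)\otimes_{\kappa(y)}\kappa(u)$), a step the paper simply asserts by appeal to \cref{gr_act:tran iff surj}.
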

	\begin{proof}
		Choose any open $ U\subseteq X$. We know that the group action is transitive on the fibers of $f$. In other words, for any $y\in Y$ and any $x,z\in f^{-1}(y)$ there exists some $g\in \tilde{G}$ such that $\rho (g,x)=z$. Now, $\bigcup_{g\in G}^{}\rho (g,U)\subseteq f^{-1}(f(U))$ and conversely, let $z\in f^{-1}(f(U))$ be arbitrary and $x\in U$. Then there exists some $g\in G$ such that $\rho (g,x)=z$. Hence, it follows that 
		\begin{align}\label{invariant}
			f^{-1}(f(U))=\bigcup_{g\in \tilde{G}}^{}\rho (g,U).
		\end{align}
		As automorphisms preserve openness and unions of open sets are again open, we see that $f^{-1}(f(U))$ is open. Hence, as \(f\) is submersive, \(f(U)\) is open.
	\end{proof}
	
	\begin{lemma}\label{prop:closedness}\cite[Proposition A.3]{gabber}
		If $f\colon X\to Y$ is a submersive morphism of schemes, and $G$ the constant \(Y\)-group scheme associated with a finite group \(\tilde{G}\) acting on \(X\) via \(\rho \colon G\times_YX\to X\), such that the morphism 
		\begin{align*}
			\rho\times_Yp_X\colon G\times_Y X\to X\times _Y X
		\end{align*}
		is surjective, then \(f\) is closed.
	\end{lemma}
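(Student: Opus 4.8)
The plan is to re-run the proof of \cref{prop:openness}, with "open" replaced throughout by "closed" and with the finiteness of \(\tilde{G}\) now doing essential work. The heart of the matter is the \(G\)-invariance of saturations: if \(Z\subseteq X\) is closed, then
\[
  f^{-1}(f(Z))=\bigcup_{g\in\tilde{G}}\rho(g,Z).
\]
This identity is obtained exactly as in \cref{prop:openness}: surjectivity of \(\rho\times_Yp_X\) forces \(\tilde{G}\) to act transitively on every fibre of \(f\) (cf. \cref{gr_act:tran iff surj}), so given \(z\in f^{-1}(f(Z))\) one picks \(x\in Z\) with \(f(x)=f(z)\) and then \(g\in\tilde{G}\) with \(\rho(g,x)=z\), yielding the inclusion \(\subseteq\); the reverse inclusion is immediate from \(f\circ\rho(g,-)=f\).

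Next I would note that for each \(g\) the translation \(\rho(g,-)\colon X\to X\) is an automorphism of schemes, with inverse \(\rho(g^{-1},-)\), hence a homeomorphism of the underlying topological spaces; consequently \(\rho(g,Z)\) is closed. Since \(\tilde{G}\) is finite, the right-hand side of the displayed identity is a \emph{finite} union of closed sets and therefore closed, so \(f^{-1}(f(Z))\) is closed.

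Finally, because \(f\) is submersive it is surjective and strict, i.e. a topological quotient map onto \(Y\), and for such a map a subset \(A\subseteq Y\) is closed if and only if \(f^{-1}(A)\) is closed. Applying this with \(A=f(Z)\) gives that \(f(Z)\) is closed, and since \(Z\) was an arbitrary closed subset of \(X\), the map \(f\) is closed.

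I do not expect a genuine obstacle: this is a mild variant of the openness argument. The one point that must not be glossed over is that finiteness of \(\tilde{G}\) is indispensable here — an arbitrary union of the closed translates \(\rho(g,Z)\) need not be closed — which is precisely why \cref{prop:closedness}, unlike \cref{prop:openness}, carries the hypothesis that \(\tilde{G}\) be finite.
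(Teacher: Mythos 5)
Your proof is correct and is essentially the paper's own argument: the paper simply remarks that the closedness case is "entirely analogous" to \cref{prop:openness}, relying on the identity \(f^{-1}(f(Z))=\bigcup_{g\in\tilde{G}}\rho(g,Z)\), the fact that automorphisms preserve closedness, and finiteness of \(\tilde{G}\) to keep the union closed. Your explicit observation about where finiteness is indispensable matches exactly the role it plays in the paper's (sketched) proof.
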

	\begin{proof}
		As automorphisms also preserve closedness, we see that the proof is entirely analogous to the proof of \cref{prop:openness}.
	\end{proof}

	\begin{prop}\label{prop1}  \cite[Proposition A.3]{gabber}
		Let $f\colon X\to Y$ be a integral, surjective morphism of schemes, and $G$ the constant \(Y\)-group scheme associated with a finite group $\tilde{G}$ acting on \(X\) via \(\rho \colon G\times_YX\to X\), such that the morphism 
		\begin{align*}
			\rho\times_Yp_X\colon G\times_Y X\to X\times _Y X
		\end{align*}
		is surjective. Then, if $Y$ is locally connected, so is $X$.
	\end{prop}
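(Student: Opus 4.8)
The plan is to reduce, via \cref{prop:openness} and \cref{prop:closedness}, to the situation of an open, closed, surjective map with finite discrete fibres, and then to settle that case topologically, arguing by induction on $|\tilde{G}|$. First, since $f$ is integral it is universally closed, hence closed, and being surjective it is submersive; moreover, by \cref{gr_act:tran iff surj}, surjectivity of $\rho\times_Yp_X$ is equivalent to $G_s$ acting transitively on $X_s$ for every $s\in Y$. Hence \cref{prop:openness} and \cref{prop:closedness} apply and $f$ is both open and closed. Next I analyse the fibres: $X_s$ is the spectrum of a ring integral over the field $\kappa(s)$, hence zero-dimensional, and transitivity forces it to be a single $\tilde{G}$-orbit — for $x\in X_s$ the at most $|\tilde{G}|$ points $g\cdot x$ already exhaust $X_s$, because the faithfully flat (hence surjective) map $X_s\times_{\kappa(s)}\spec{(\kappa(x))}\to X_s$ is covered by the points $(g\cdot x,x)$. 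So every fibre $f^{-1}(s)$ is finite, discrete, equal to a single $\tilde{G}$-orbit whose points are pairwise non-specialising, and a subset of $X$ is $f$-saturated if and only if it is $\tilde{G}$-stable.

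It now suffices to prove the purely topological statement that a surjective, open, closed map with finite discrete fibres lifts local connectedness from its target. So let $U\subseteq X$ be open; by \cref{lemma1} it is enough to show, for a fixed $x\in U$, that the connected component $C$ of $x$ in $U$ is a neighbourhood of $x$. Since $f$ is open, $f(U)$ is open in $Y$, hence locally connected, and replacing $U$ by $U\cap f^{-1}(V)$ where $V$ is the connected component of $f(x)$ in $f(U)$ — which contains $f(C)$, so $C$ is unchanged — we may assume $f(U)$ is connected. Put $\widetilde U=f^{-1}(f(U))$, a $\tilde{G}$-stable open mapping onto the connected space $f(U)$ by an open, closed, surjective map with finite fibres, and use that such a map has a source with only finitely many connected components; these are then clopen in $\widetilde U$ (hence open in $X$) and $\tilde{G}$ permutes them.

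If $\widetilde U$ is disconnected, the component $E$ containing $x$ has $\tilde{G}$-stabiliser $\tilde{G}'$ of order strictly less than $|\tilde{G}|$: were $\tilde{G}'=\tilde{G}$, then $E$, being $\tilde{G}$-stable and meeting the orbit $f^{-1}(f(x))$, would contain every fibre, forcing $\widetilde U=E$. Moreover $f|_E\colon E\to f(U)$ again satisfies the hypotheses of the proposition, now with the group $\tilde{G}'$ — its fibre over $y$ is $f^{-1}(y)\cap E$, a single $\tilde{G}'$-orbit because two components that meet coincide — so by the inductive hypothesis $E$ is locally connected, hence so is the open set $U\cap E$, and $C$, a connected component of $U\cap E$, is open. (When $|\tilde{G}|=1$ the base case applies: surjectivity of $\rho\times_Yp_X=\Delta_{X/Y}$ makes $f$ a monomorphism, hence an integral surjective monomorphism, hence a surjective closed immersion, hence a homeomorphism.) There remains the case $\widetilde U$ connected, where the group cannot be shrunk; here one argues with the fibre directly: since $f^{-1}(f(x))$ is finite and discrete one isolates $x$ in its fibre by an open $W\ni x$, $W\subseteq U$, with $W\cap f^{-1}(f(x))=\{x\}$, and — using the closedness of $f$ through the tube lemma for closed maps and the fact that the restriction of a closed map to the preimage of an open set is closed — one passes to a connected open $V\ni f(x)$ over which the sheet through $x$ maps onto $V$ by a closed surjection with singleton fibre over $f(x)$; for such a sheet the preimage of $V$ is connected, since a splitting into disjoint nonempty opens would, by openness of $f$, have open images, which are closed in $V$ by closedness of the restriction, one of them missing $f(x)$ and hence empty by connectedness of $V$.

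The main obstacle is precisely this topological core — the assertion that an open, closed, surjective map with finite discrete fibres transports local connectedness, including the two facts used above, that such a map over a connected base has only finitely many components in the source and that in the "irreducible" case one can isolate a single point of a fibre while retaining enough closedness of $f$. This is where the finiteness of the fibres, the non-specialisation of their points (coming from zero-dimensionality of the scheme-theoretic fibres), and the closed-map tube lemma all get used, and it is the part of the argument demanding genuine care.
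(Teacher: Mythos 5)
The paper does not actually prove this proposition---it is quoted from Gabber's Proposition~A.3 without proof, the only preparation in the text being \cref{prop:openness} and \cref{prop:closedness}---so your argument has to stand on its own. Your reduction is the natural one and is largely sound: integral morphisms are closed, so $f$ is submersive and those two lemmas make it open and closed; the fibres are finite, discrete, and single $\tilde{G}$-orbits; a saturated open over a connected base has at most $|\tilde{G}|$ components, all clopen and permuted transitively by $\tilde{G}$; and when $\widetilde U=f^{-1}(f(U))$ is disconnected the passage to the component $E$ of $x$ with its proper stabiliser $\tilde{G}'$ is legitimate. (Two small points there: surjectivity of the shear map for $(E,\tilde{G}')$ is genuinely stronger than set-theoretic transitivity on the fibres, but it does hold because $E\times_{f(U)}E$ is open in $X\times_YX$ and any shear-preimage of one of its points automatically has group coordinate stabilising $E$ --- say this explicitly; and in the base case $|\tilde{G}|=1$ a surjective diagonal makes $f$ universally injective, hence a universal homeomorphism, but not a monomorphism or closed immersion.)

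The genuine gap is the terminal case where $\widetilde U$ is connected and $|\tilde{G}|>1$, and this case really occurs and cannot be shrunk away: for $\spec(\mathbb{Z}[i])\to\spec(\mathbb{Z})$ with $\tilde{G}=\mathbb{Z}/2$ and $x=(2+i)$, every nonempty saturated open is a dense open of an irreducible space, hence connected. Your "sheet" argument there rests on two unavailable facts. First, the tube lemma yields an open $V\ni f(x)$ with $f^{-1}(V)\subseteq W$ only if $W$ contains the whole fibre $f^{-1}(f(x))$, which it does not by construction (in the example, $f^{-1}(V)\subseteq W$ would force $(5)\notin V$); one must saturate to $\bigcup_g gW$, after which $f^{-1}(V)$ is spread over all the translates and the sheet $W\cap f^{-1}(V)$ is not a full preimage. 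Second, and decisively, the closedness of $W\cap f^{-1}(V)\to V$ is unjustified: the fact you invoke applies to the saturated open $f^{-1}(V)$, not to the non-saturated open subset $W\cap f^{-1}(V)$ of it, and restrictions of closed maps to arbitrary open subsets are not closed. Since the points of a fibre of an integral morphism need not lie in disjoint opens (the two primes of $\mathbb{Z}[i]$ over $5$ meet every nonempty open), the picture of a trivialised covering with separated sheets is simply not available, and the connectedness of the sheet --- which you yourself flag as the part "demanding genuine care" --- is exactly the unresolved heart of the matter. As it stands the induction does not terminate and the proof is incomplete; this is precisely the content one would have to import from Gabber's own argument.
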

	
	\begin{prop}
		Let \(X\to Y\) be a surjective, quasi-compact morphism of schemes. Then, if \(X\) is locally connected, so is \(Y\).
	\end{prop}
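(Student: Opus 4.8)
The plan is to reduce to showing that the connected components of $Y$ are open, and then to exploit quasi-compactness of $f$ to see that $Y$ has, over each affine open, only finitely many connected components.

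First I would observe that for any open $U \subseteq Y$, the restricted morphism $f^{-1}(U) \to U$ is still surjective and quasi-compact, and $f^{-1}(U)$ is still locally connected as an open subscheme of $X$ (cf.\ \cref{lemma1}). Hence it suffices to prove, for an arbitrary scheme $Y$ receiving such a morphism, that the connected components of $Y$ are open; applying this with $U$ in place of $Y$ then yields, via condition (ii) of \cref{top:def1}, local connectedness of $Y$. So let $Y_c$ be a connected component of $Y$. Since openness is local, we may fix an affine open $V \subseteq Y$, and it is enough to show that $Y_c \cap V$ is open in $V$.

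Here is the heart of the argument. The scheme $f^{-1}(V)$ is quasi-compact, because $f$ is quasi-compact and $V$ is affine, and it is locally connected, being open in $X$; hence its connected components are open, pairwise disjoint and nonempty, and by quasi-compactness there are only finitely many of them, say $W_1, \dots, W_n$. Each image $f(W_i)$ is connected, and by surjectivity $V = f(f^{-1}(V)) = \bigcup_{i=1}^n f(W_i)$. A topological space covered by $n$ connected subsets has at most $n$ connected components (each component contains one of the covering sets), so $V$ has finitely many connected components; being closed and finite in number, each of them is also open. Since every connected component of $V$ lies in a single connected component of $Y$, it is either contained in $Y_c$ or disjoint from it; therefore $Y_c \cap V$ is a finite union of (open) connected components of $V$, hence open. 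Running over an affine open cover of $Y$ shows $Y_c$ is open.

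The point where one has to be a little careful — rather than a deep obstacle — is that $f$ need not be open, so \cref{lemma1} does not apply directly. Quasi-compactness is precisely what lets us trade openness of $f$ for the local finiteness of the number of connected components of $Y$, which is what ultimately makes the connected components of $Y$ simultaneously closed and open.
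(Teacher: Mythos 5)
Your proof is correct and follows essentially the same route as the paper's: both reduce to a quasi-compact (affine) open $V\subseteq Y$, observe that $f^{-1}(V)$ is quasi-compact and locally connected and hence has finitely many connected components, and push these forward to cover $V$ by finitely many connected subsets, so that $V$ has finitely many (hence open) connected components. Your remark that $f$ need not be open, so that \cref{lemma1} cannot be applied directly, correctly identifies the one subtlety.
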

	\begin{proof}
		As the quasi-compact open subsets form a basis of the topology, it suffices to show the claim for quasi-compact opens. So, let \(V\subseteq Y\) be a quasi-compact open. Its inverse image \(U\coloneqq f^{-1}(V)\) in \(X\) is locally connected. Hence, its connected components are open. As \(U\) is also quasi-compact, we deduce that it only has finitely many connected components. Now, because continuous images of connected sets are connected, we see that the images of the connected components of \(U\) are connected and cover \(V\). Hence, \(V\) must have only finitely many connected components.
	\end{proof}
	
	\begin{corollary}\label{body:loc_conn_proof}
		The property of being a locally connected scheme satisfies \ref{dagger}.
	\end{corollary}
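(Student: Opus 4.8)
The plan is to verify, one at a time, the four conditions that constitute \ref{dagger} for the property $\textbf{P}=$ ``$X$ is a locally connected scheme''; each of them will follow immediately from a result already established above. For the first condition I would simply invoke \cref{prop1}: for an integral surjective morphism $f\colon X\to Y$ of affine schemes equipped with a constant finite $Y$-group scheme $G$ acting on $X$ such that $\rho\times_Y p_X\colon G\times_Y X\to X\times_Y X$ is surjective, local connectedness of $Y$ forces local connectedness of $X$, which is precisely what the first bullet asks for.

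For the second condition I would take a surjective morphism $\spec(B)\to\spec(A)$ of affine schemes with $\spec(B)$ locally connected. Since a morphism of affine schemes is affine, hence quasi-compact, and $\spec(B)$ is itself quasi-compact, this situation is an instance of the proposition immediately preceding this corollary, which asserts that a surjective quasi-compact morphism of schemes carries local connectedness from source to target; hence $\spec(A)$ is locally connected. The third condition is immediate from the definition: if $X$ is locally connected and $U\subseteq X$ is open, then any open $W\subseteq U$ is also open in $X$, so by part (ii) of \cref{top:def1} the connected components of $W$ are open in $X$ and a fortiori in $U$, making $U$ locally connected. The fourth condition is exactly the final clause of \cref{lemma1}: a scheme admitting an open cover by locally connected opens is locally connected.

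Once all four bullet points are checked, the corollary follows. I do not expect any genuine obstacle: the statement is a bookkeeping assembly of \cref{prop1}, \cref{lemma1}, \cref{top:def1} and the quasi-compact descent proposition proved above. The only point that warrants a moment's care is reconciling the hypotheses of the second bullet with those of the descent proposition, i.e.\ observing that every morphism of affine schemes is quasi-compact and that affine schemes are quasi-compact, so that the affine surjection in question is in particular a surjective quasi-compact morphism of schemes.
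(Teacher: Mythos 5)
Your proposal is correct and matches the paper's proof, which likewise assembles the four conditions of \ref{dagger} from \cref{prop1}, the preceding quasi-compact descent proposition, \cref{top:def1}, and \cref{lemma1}; you have merely written out the bookkeeping that the paper leaves implicit.
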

	\begin{proof}
		This follows from the propositions above, as well as \cref{top:def1} and \cref{lemma1}.
	\end{proof}

	\begin{prop}\label{irred_base}  
		Let $f\colon X\to Y$ be a submersive morphism of schemes, and $G$ the constant \(Y\)-group scheme associated with a finite group $\tilde{G}$ acting on \(X\) via \(\rho \colon G\times_YX\to X\), such that the morphism 
		\begin{align*}
			\rho\times_Yp_X\colon G\times_Y X\to X\times _Y X
		\end{align*}
		is surjective. Then, if $Y$  has locally finitely many irreducible components, so does $X$.
	\end{prop}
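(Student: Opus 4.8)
The plan is to reduce, via \cref{finite}, to a statement about an open morphism with finite fibres and then exploit the fact that such a morphism is generizing. First I would localise: let $x\in X$ and set $y\coloneqq f(x)$. Since $Y$ has locally finitely many irreducible components there is an open $V\subseteq Y$ with $y\in V$ meeting only finitely many irreducible components of $Y$, which by \cref{finite} means that $V$ itself has only finitely many irreducible components, say with generic points $\eta_1,\dots,\eta_n$ (a scheme is a sober space, so each irreducible component has a unique generic point). Put $U\coloneqq f^{-1}(V)=X\times_YV$, an open subscheme of $X$ containing $x$; by \cref{finite} again it suffices to show that $U$ has only finitely many irreducible components, for then $U$ is an open neighbourhood of $x$ meeting only finitely many irreducible components of $X$, and since $x$ was arbitrary this gives the claim.

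Next I would check that $f|_U\colon U\to V$ still satisfies the hypotheses of \cref{prop:openness}: its underlying continuous map is the restriction of the quotient map $f$ to the preimage of the open set $V$, hence again a surjective quotient map, i.e.\ submersive; the base change $G\times_YV$ is the constant $V$-group scheme attached to $\tilde{G}$, it acts on $U$ by base change, and the corresponding shear map $(G\times_YV)\times_VU\to U\times_VU$ is the base change of $\rho\times_Yp_X$ along $V\to Y$, hence surjective. Thus $f|_U$ is open by \cref{prop:openness}. Moreover $\tilde{G}$ acts transitively on every fibre of $f$ (this is exactly how surjectivity of the shear map was used in the proof of \cref{prop:openness}), hence on every fibre of $f|_U$; as $\tilde{G}$ is finite, every fibre of $f|_U$ has at most $|\tilde{G}|$ points.

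The core point is then that an open morphism with finite fibres is generizing: if $u\in U$ and $v'$ is a generization of $f|_U(u)$ in $V$, then for every open $O\subseteq U$ with $u\in O$ the set $f|_U(O)$ is an open neighbourhood of $f|_U(u)$, hence contains $v'$, so $O$ meets $(f|_U)^{-1}(v')$; thus $u$ lies in the closure of $(f|_U)^{-1}(v')$, and since this fibre is finite its closure is the union of the closures of its finitely many points, so $u$ is a specialization of some point $u'$ of the fibre. Applying this to $\zeta$ a generic point of an irreducible component of $U$: a proper generization of $f|_U(\zeta)$ in $V$ would lift to a proper generization of $\zeta$, which does not exist, so $f|_U(\zeta)$ has no proper generization in $V$ and hence equals the generic point $\eta_j$ of a component $V_j$ of $V$ containing it. Therefore the set of generic points of irreducible components of $U$ is contained in $\bigcup_{j=1}^{n}(f|_U)^{-1}(\eta_j)$, a set with at most $n\,|\tilde{G}|$ elements, so $U$ has finitely many irreducible components, as required.

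The main obstacle I anticipate is one of care rather than depth: verifying that $f|_U$ genuinely inherits the hypotheses of \cref{prop:openness} (submersiveness under restriction to the preimage of an open set, and the group action together with surjectivity of the shear map under base change), and observing that finiteness of $\tilde{G}$ is precisely what forces the fibres to be finite — which is the ingredient that upgrades ``open'' to ``generizing'' and makes the final counting work. Once this is in place, the two applications of \cref{finite} are routine, and \cref{prop:closedness} is in fact not needed for this argument.
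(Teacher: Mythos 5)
Your proof is correct and follows essentially the same route as the paper's: localize so that $Y$ has finitely many irreducible components, use \cref{prop:openness} together with finiteness of $\tilde{G}$ to get an open map with finite fibres, and bound the generic points of $X$ by the preimage of the finitely many generic points of $Y$. The only cosmetic difference is the last step, where the paper argues that the finite set $f^{-1}(P)$ of points over the generic points of $Y$ is dense (the preimage of a dense set under an open map is dense) and concludes from closure commuting with finite unions, while you show directly that an open map with finite fibres is generizing; both yield the same count.
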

	\begin{proof}
		We reduce to the case where \(Y=\spec{\left(B\right)}\) has finitely many irreducible components. Then for any fiber \(X_y\) of \(f\), we know that \( \card(X_y)\leq  \card(\tilde{G}) <\infty\). We consider the set of generic points \(P\coloneqq\left\{\xi_1,\dots,\xi_r\right\}\) of \(Y\) and notice that because \(f\) is open (cf. \cref{prop:openness}), \(f^{-1}(P)\) is dense in \(X\). Further, it is finite and because taking the closure commutes with unions, we conclude by \cite{stacks} \href{https://stacks.math.columbia.edu/tag/0G2Y}{Tag 0G2Y} after removing potentially unnecessary points from \(f^{-1}(P)\). 
	\end{proof}
	
	\begin{prop}
		Let \(X\to Y\) be a surjective, quasi-compact morphism of schemes. Then, if \(X\) is quasi-compact and has locally finitely many irreducible components, so does \(Y\).
	\end{prop}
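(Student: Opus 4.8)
The plan is to bypass the morphism's local structure entirely and reduce everything to a finiteness count on $Y$. First I would apply \cref{top:fin_irred}: since $X$ is quasi-compact and has locally finitely many irreducible components, it has only finitely many irreducible components $Z_1,\dots,Z_n$, which in particular cover $X$. Because $f$ is surjective and continuous, $Y=f(X)=\bigcup_{i=1}^{n}f(Z_i)$, and since the continuous image of an irreducible space is irreducible, each $\overline{f(Z_i)}$ is an irreducible closed subset of $Y$. As closure commutes with finite unions, $Y=\bigcup_{i=1}^{n}\overline{f(Z_i)}$ is a finite union of irreducible closed subsets. (Incidentally $Y$ is then quasi-compact, being the continuous image of $X$, though this is not needed below.)

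The second step is the purely topological observation that a space expressible as a finite union of irreducible closed subsets has at most that many irreducible components. Let $V\subseteq Y$ be an irreducible component. Then $V=\bigcup_{i=1}^{n}\bigl(V\cap\overline{f(Z_i)}\bigr)$ is a finite union of closed subsets of the irreducible space $V$, so $V=V\cap\overline{f(Z_i)}$ for some $i$, i.e.\ $V\subseteq\overline{f(Z_i)}$; and since $\overline{f(Z_i)}$ is an irreducible closed set containing $V$ while $V$ is maximal among irreducible subsets of $Y$, we get $V=\overline{f(Z_i)}$. Hence every irreducible component of $Y$ already occurs among $\overline{f(Z_1)},\dots,\overline{f(Z_n)}$, so $Y$ has finitely many irreducible components, and therefore locally finitely many in the sense of \cref{top:def_fin_irred}.

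I do not expect a real obstacle: the argument is short and the only point needing a moment's care is the elementary fact that an irreducible set contained in a finite union of closed sets lies in one of them, which is precisely why we invoke \cref{top:fin_irred} to obtain \emph{finitely many} irreducible components of $X$ rather than merely the local version. It is also worth noting explicitly that quasi-compactness of the morphism $f$ plays no role once $X$ itself is assumed quasi-compact; the hypothesis is kept only for parallelism with the preceding proposition, where the source was assumed merely locally connected.
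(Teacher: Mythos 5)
Your proposal is correct and follows essentially the same route as the paper's proof: reduce via \cref{top:fin_irred} to $X$ having finitely many irreducible components, then use that images of irreducible sets are irreducible to write $Y$ as a finite union of irreducible (closed) subsets, whence $Y$ has finitely many irreducible components. You merely spell out the final counting step and the (accurate) observation that quasi-compactness of the morphism is not actually used once $X$ is assumed quasi-compact, both of which the paper leaves implicit.
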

	\begin{proof}
		We may reduce to the case where \(X\) has only finitely many irreducible components by \cref{top:fin_irred}. Now, as images of irreducible sets are irreducible, we see that the images of the irreducible components of \(X\) are a finite covering of irreducible sets. Hence, \(Y\) must have finitely many irreducible components. 
	\end{proof}
	
	\begin{corollary}\label{body:loc_fin_irred_comp_proof}
		The property of being a scheme with locally finitely many irreducible components satisfies \ref{dagger}. 
	\end{corollary}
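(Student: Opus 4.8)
The plan is to verify the four conditions of \ref{dagger} separately, drawing on the two propositions just proved and on the bijection of \cref{finite}. The only step I expect to need a word of care is checking that an integral surjective morphism is submersive, which is what lets the first condition be handled by \cref{irred_base} (stated for submersive morphisms); everything else is formal once \cref{finite} and the two propositions are available.

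For the first condition I would use that an integral morphism is universally closed, hence closed, and that a surjective closed continuous map is submersive; thus an integral surjective morphism of affine schemes is in particular submersive, and together with the hypothesis on the constant group scheme \(G\) this places us exactly in the situation of \cref{irred_base}, which gives that \(X\) has locally finitely many irreducible components whenever \(Y\) does. For the second condition I would note that every affine scheme is quasi-compact and that every morphism of affine schemes is quasi-compact (being affine); hence a surjective morphism \(\spec(B)\to\spec(A)\) has quasi-compact source and fits the hypotheses of the proposition immediately preceding this corollary, which then shows \(\spec(A)\) has (even finitely many, hence) locally finitely many irreducible components once \(\spec(B)\) does.

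For the last two conditions I would first restate the property via \cref{finite}: for an open subset \(U\) of a space \(X\) the irreducible components of \(U\) are in bijection with those irreducible components of \(X\) that meet \(U\), so a point of \(X\) admits an open neighbourhood meeting only finitely many irreducible components of \(X\) if and only if it admits an open neighbourhood which, viewed as a topological space, has only finitely many irreducible components.

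In this reformulated form the property is visibly local and stable under passing to open subspaces: for the third condition an open neighbourhood of a point inside an open subscheme \(U\subseteq X\) is already open in \(X\); for the fourth condition a point of \(X\) lies in some member \(X_i\) of the cover, a witnessing neighbourhood for the property in \(X_i\) is open in \(X\) as well, and \cref{finite} transports the finiteness statement back and forth between \(X_i\) and \(X\). Assembling the four verifications completes the argument.
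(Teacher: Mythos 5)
Your proof is correct and takes essentially the same route as the paper, which likewise handles the first two conditions by invoking the two preceding propositions (using that integral surjective morphisms are submersive) and the last two via \cref{top:def_fin_irred} together with the bijection of \cref{finite}. The paper compresses all of this into one sentence; your version merely supplies the details.
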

	\begin{proof}
		This follows from the propositions above, as well as \cref{top:def_fin_irred} and the fact that integral, surjective morphisms are in particular submersive.
	\end{proof}

	\noindent The following proposition has been proven by Johan de Jong. The proof original can be found in \cite{mathoverflow} and will be presented here with some slight edits.
	
	\begin{prop}\cite{mathoverflow}\label{top_noe_prop}
		Let $f : X \to Y$ be a continuous map of sober topological spaces which is closed, has finite fibers, and has the following property: for all $y \in Y$ the group $\operatorname{Aut}(X/Y)$ acts transitively on the fiber $f^{-1}(y)$. If $Y$ is Noetherian, then $X$ is Noetherian.
	\end{prop}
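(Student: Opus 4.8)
I would prove, by Noetherian induction on the closed subsets $T\subseteq Y$, the statement that $f^{-1}(T)$ is a Noetherian space; taking $T=Y$ then gives the conclusion, since $X=f^{-1}(Y)$. So fix a closed $T\subseteq Y$ and assume $f^{-1}(T')$ is Noetherian for every closed $T'\subsetneq T$. Set $\bar X:=f^{-1}(T)$ and $\bar f:=f|_{\bar X}\colon\bar X\to T$. Since $\bar X$ is closed in $X$, the map $\bar f$ is again closed (the image of a set closed in $\bar X$ is closed in $X$, hence closed in $Y$ and contained in $T$), it has finite fibres, $T$ is Noetherian and sober, and — this is where using $\operatorname{Aut}(X/Y)$ rather than a fixed group matters — every $g\in\operatorname{Aut}(X/Y)$ preserves $\bar X$ and restricts to an element of $\operatorname{Aut}(\bar X/T)$, so $\operatorname{Aut}(\bar X/T)$ still acts transitively on every fibre of $\bar f$. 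We may assume $\bar X\neq\emptyset$.

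\textbf{Reductions.} If $T$ is reducible, its finitely many irreducible components $T_1,\dots,T_k$ (finite because $T$ is Noetherian) give $\bar X=\bigcup_i f^{-1}(T_i)$, a finite union of closed subspaces each Noetherian by the inductive hypothesis (as $T_i\subsetneq T$), hence $\bar X$ is Noetherian; so we may assume $T$ irreducible, with generic point $\eta$. Next $\bar f(\bar X)$ is closed in $T$, and if it is proper then $\bar X=f^{-1}(\bar f(\bar X))$ is Noetherian by the inductive hypothesis; so we may assume $\bar f$ surjective. Being closed and surjective, $\bar f$ is submersive, and transitivity of the $\operatorname{Aut}(\bar X/T)$-action gives $\bar f^{-1}(\bar f(U))=\bigcup_{g}g(U)$ for every open $U\subseteq\bar X$, exactly as in \cref{prop:openness}; hence $\bar f$ is open. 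As in the proof of \cref{irred_base}, openness forces $\bar f^{-1}(\eta)$ to be dense in $\bar X$, and as this fibre is finite, say $\bar f^{-1}(\eta)=\{x_1,\dots,x_r\}$, we get $\bar X=\overline{\{x_1\}}\cup\dots\cup\overline{\{x_r\}}$.

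\textbf{Finishing step.} It then suffices to show each $Z_i:=\overline{\{x_i\}}$ is Noetherian. The key observation is that the finite set $\bar f^{-1}(\eta)$, with the specialization order of the sober (hence $T_0$) space $\bar X$, is a finite poset on which $\operatorname{Aut}(\bar X/T)$ acts transitively by order automorphisms; such a poset must be an antichain, since a strict relation $a<b$ together with an automorphism $g$ with $g(a)=b$ would yield $b<g(b)<g^2(b)<\cdots$, an infinite strictly increasing chain in a finite set. Hence $x_j\notin\overline{\{x_i\}}$ for $j\neq i$, i.e. $\bar f^{-1}(\eta)\cap Z_i=\{x_i\}$. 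To check the descending chain condition for $Z_i$, take closed subsets $Z_i\supseteq W_1\supseteq W_2\supseteq\cdots$. Either all $W_n=Z_i$, or some $W_N$ is proper in the irreducible space $Z_i$ and therefore omits its generic point $x_i$; then for $n\geq N$ one has $W_n\cap\bar f^{-1}(\eta)\subseteq Z_i\cap\bar f^{-1}(\eta)=\{x_i\}$ with $x_i\notin W_n$, so $W_n\cap\bar f^{-1}(\eta)=\emptyset$ and $\eta\notin\bar f(W_n)$. Thus $\bar f(W_n)$ is a closed subset of the irreducible space $T$ missing its generic point, hence proper in $T$; the descending chain $(\bar f(W_n))_{n\geq N}$ stabilizes (as $Y$ is Noetherian) to some closed $C\subsetneq T$, and for $n$ large $W_n\subseteq\bar f^{-1}(C)=f^{-1}(C)$, which is Noetherian by the inductive hypothesis. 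So $(W_n)$ is eventually a descending chain of closed subsets of a Noetherian space and stabilizes.

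\textbf{Main obstacle.} Everything above apart from one ingredient is bookkeeping with images and preimages of closed sets; the genuine difficulty is the behaviour over the generic point $\eta$, where a descending chain of closed subsets of $\bar X$ can keep changing inside the finite fibre $\bar f^{-1}(\eta)$ while its $\bar f$-image stays all of $T$, so that the inductive hypothesis never engages. What I expect to defeat this is precisely the combination of (i) decomposing $\bar X$ into the $\overline{\{x_i\}}$ using openness of $\bar f$, and (ii) the antichain property extracted from transitivity of $\operatorname{Aut}$ on the finite fibre: over each $\overline{\{x_i\}}$ the fibre above $\eta$ collapses to a single point, so a proper closed subset can no longer dominate $T$, and the Noetherian recursion closes.
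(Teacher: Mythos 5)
Your proof is correct and follows essentially the same strategy as the paper's (de Jong's) argument: Noetherian induction, reduction to an irreducible base with $f$ surjective, the decomposition of $X$ into the closures of the finitely many points of the generic fiber, and the antichain property of that fiber under specialization extracted from transitivity of $\operatorname{Aut}(X/Y)$. The only organizational difference is the final step, where you verify the descending chain condition on each component $\overline{\{x_i\}}$ separately (valid, since a finite union of closed Noetherian subspaces is Noetherian) rather than splitting an arbitrary chain $Z_n$ as $X_1\cup\dots\cup X_s\cup T_n$ as the paper does; both versions reduce, in the end, to the inductive hypothesis applied to a proper closed subset of $Y$ missing the generic point.
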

	\begin{proof}
		We will use that if $Y' \subseteq Y$ is closed, then the assumptions made on $f$ hold for the restriction $f|_{f^{-1}(Y')} \colon f^{-1}(Y') \to Y'$ too. Thus by Noetherian induction, we may assume that for every proper closed subspace $Y'$ of $Y$ the inverse image $f^{-1}(Y')$ is Noetherian.
		
		\noindent If \(Y=\emptyset\), then \(X\) is also the empty scheme and the claim holds.
		
		\noindent If $Y$ is not irreducible and nonempty, then we can write $Y = Y_1 \cup Y_2$ with $Y_i$ a proper closed subset of $Y$ and we see that $X = f^{-1}(Y_1) \cup f^{-1}(Y_2)$ is a union of two Noetherian subsets, whence $X$ is Noetherian.
		
		\noindent Assume $Y$ is irreducible. Let $\xi \in Y$ be its generic point and let $f^{-1}(\xi) = \{\eta_1, \ldots, \eta_r\}$. Denote $X_j$ the closure of $\{\eta_j\}$ in $X$. Then $X_1, \ldots, X_r \subseteq X$ are the irreducible components of $X$ which map onto $Y$. If $r = 0$, then $f$ is not surjective and $X = f^{-1}(f(X))$ is Noetherian. Thus we may and do assume $r \geq 1$. For $g \in \operatorname{Aut}(X/Y)$ we see that $g(X_1) = X_j$ for some $j$. We conclude that $X = X_1 \cup \ldots \cup X_r$ by the assumption in the proposition. Further, for all \(i\neq j\in \{1,\dots, r\}\) it holds that \(\eta_i\neq X_j\). To see this, we choose \(i\in \{1,\dots, r\}\) such that for any \(j\in \{1,\dots, r\}\) with \(i\neq j\), the irreducible component \(X_j\) is not a strict subset of \(X_i\). As \(\{1,\dots, r\}\) is finite, such an element exists. Then we conclude by the action of the automorphism group.
		
		\noindent Let $X \supseteq Z_1 \supset Z_2 \supseteq Z_3 \supseteq \ldots$ be a decreasing sequence of closed subsets. We will show that this sequence stabilizes, which finishes the proof. Looking at the decreasing sequence $Z_i \cap \{\eta_1, \ldots, \eta_r\}$, after renumbering we see that there exists an $0 \leq s \leq r$ and $n\in \mathbb{N}$ such that for all $i \geq n$ we have: $Z_i \supseteq X_1 \cup \ldots \cup X_s$ and $X_j \not \subseteq Z_i$ for $r  \geq j > s$. For $i \geq n$ set
		$$
		T_i = \text{closure of  }Z_i \setminus X_1 \cup \ldots \cup X_s
		$$
		Note that $Z_i = X_1 \cup \ldots \cup X_s \cup T_i$ and that $f^{-1}(f(T_n)) \supseteq T_n \supseteq T_{n + 1} \supseteq T_{n + 2} \supseteq \ldots $ is again a decreasing sequence of closed subsets. If we can show that $f(T_n)$ is a proper closed subset of $Y$, then the sequence of $T_i$ stabilizes, and so does our original sequence, and the proof is complete. To see this note that $T_n \subseteq X_{s + 1} \cup \ldots \cup X_r$ because $X = X_1 \cup \ldots \cup X_r$ and therefore, \((Z_n \setminus X_{1} \cup \ldots \cup X_s)\subseteq X_{s+1} \cup \ldots \cup X_r\). Hence $\eta_j \not \in T_n$ for $j = 1, \ldots, s$. On the other hand, for $j = s + 1, \ldots, r$ the intersection $Z_n \cap X_j$ is a closed subset not containing $\eta_j$. Since clearly $T_n \cap X_j \subseteq Z_n \cap X_j$ we conclude that $\eta_j \not \in T_n$ for $j = s + 1, \ldots, r$. Thus $\xi \not \in f(T_n)$ as desired.
	\end{proof}
	
	\begin{corollary}\label{noe_base}  
		If $f\colon X\to Y$ is an integral surjective morphism of schemes, and $G$ the constant \(Y\)-group scheme associated with a finite group $\tilde{G}$ acting on \(X\) via \(\rho \colon G\times_YX\to X\), such that the morphism 
		\begin{align*}
			\rho\times_Yp_X\colon G\times_Y X\to X\times _Y X
		\end{align*}
		is surjective. Then, if $Y$ is locally topologically Noetherian, so is $X$.
	\end{corollary}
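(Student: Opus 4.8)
The plan is to reduce to an affine, topologically Noetherian base and then apply \cref{top_noe_prop}. Since being locally topologically Noetherian is a property local on $X$, and since $f$ is affine (every integral morphism is affine), I would fix a point $x\in X$, put $y=f(x)$, and --- using that $Y$ is locally topologically Noetherian together with \cref{top_noe_equiv} (open subspaces of Noetherian spaces are Noetherian) --- choose an affine open neighbourhood $V=\spec(B)$ of $y$ whose underlying space is Noetherian. Write $U=f^{-1}(V)=\spec(A)$, an open neighbourhood of $x$. The restriction $f|_U\colon U\to V$ is again integral and surjective, the constant $V$-group scheme attached to $\tilde{G}$ acts on $U$ through the restriction of $\rho$, and the surjectivity of $\rho\times_Yp_X$ passes to $(\rho|_U)\times_Vp_U$ because, by \cref{gr_act:tran iff surj}, this property can be checked on fibres and the fibres of $f|_U$ over points of $V$ coincide with the corresponding fibres of $f$. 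Hence it suffices to prove: if $Y=\spec(B)$ has Noetherian underlying space and $f\colon X=\spec(A)\to Y$ is integral surjective with a constant finite group $\tilde{G}$ acting such that $\rho\times_Yp_X$ is surjective, then $X$ is topologically Noetherian.

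In this situation I would verify the three hypotheses of \cref{top_noe_prop} for $f$. The spaces $X$ and $Y$ are sober, being the underlying spaces of schemes. The map $f$ is closed, since integral morphisms are (universally) closed. For the fibres, fix $y\in Y$: by \cref{gr_act:tran iff surj} the surjectivity of $\rho\times_Yp_X$ means that $G_y$ acts transitively on $X_y$; but $G$ is the constant group scheme on the finite group $\tilde{G}$, so $G_y$ is a disjoint union of $\card(\tilde{G})$ copies of $\spec(\kappa(y))$, and transitivity then forces $\card(X_y)\le\card(\tilde{G})<\infty$, exactly as in the proof of \cref{irred_base}. The same transitivity statement shows that $\tilde{G}$ acts transitively on the (finitely many) points of $X_y$; since the action $\rho$ factors through a homomorphism $\tilde{G}\to\operatorname{Aut}(X/Y)$, $g\mapsto\rho_g$, with inverse $\rho_{g^{-1}}$, the group $\operatorname{Aut}(X/Y)$ acts transitively on $f^{-1}(y)$ as well.

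Now \cref{top_noe_prop} applies and yields that $X$ is topologically Noetherian, which finishes the reduced case and hence the corollary. I do not anticipate a serious obstacle: the real work is hidden in \cref{top_noe_prop}, and the present corollary is essentially bookkeeping --- translating the scheme-theoretic transitivity hypothesis into the finiteness of the fibres and the transitivity of $\operatorname{Aut}(X/Y)$ required there, together with the fact that integral morphisms are closed. The one point that merits a little care is the reduction step, where one must confirm that passing to the preimage of an affine Noetherian open of $Y$ preserves integrality, surjectivity, and the surjectivity of $\rho\times_Yp_X$; the last of these is precisely what the fibrewise criterion \cref{gr_act:tran iff surj} supplies.
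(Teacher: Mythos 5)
Your proposal is correct and follows the same route as the paper, whose proof consists precisely of citing \cref{gr_act:tran iff surj} and \cref{top_noe_prop}; you simply make explicit the localization on $Y$ and the verification of the hypotheses (soberness, closedness of integral morphisms, finiteness of fibres, and transitivity of $\operatorname{Aut}(X/Y)$) that the paper leaves implicit. No gaps.
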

	\begin{proof}
		This follows immediately from \cref{gr_act:tran iff surj,top_noe_prop}.
	\end{proof}
	\begin{prop}
		Let \(\spec{\left(B\right)}\to \spec{\left(A\right)}\) be a surjective morphism of affine schemes. Then if \(\spec{\left(B\right)}\) is locally topologically Noetherian, so is \(\spec{\left(A\right)}\).
	\end{prop}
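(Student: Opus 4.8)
The plan is to reduce the statement to a purely topological fact about Noetherian spaces and then dispatch it in one line. First I would note that both $\spec(A)$ and $\spec(B)$ are quasi-compact, being affine. Hence, by \cref{top:loc_noe}, a quasi-compact locally Noetherian space is already Noetherian; in particular the hypothesis that $\spec(B)$ is locally topologically Noetherian is equivalent to $\spec(B)$ being topologically Noetherian, and it suffices to prove that $\spec(A)$ is topologically Noetherian, since that is a fortiori locally topologically Noetherian.

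Next I would establish the general fact that a continuous surjective image of a Noetherian topological space is Noetherian. So let $f\colon X\to Y$ be a continuous surjection with $X$ Noetherian, and let $Z_1\supseteq Z_2\supseteq\cdots$ be a descending chain of closed subsets of $Y$. Pulling back yields the descending chain $f^{-1}(Z_1)\supseteq f^{-1}(Z_2)\supseteq\cdots$ of closed subsets of $X$, which becomes stationary by \cref{top:def_loc_noe} applied to $X$. Since $f$ is surjective one has $f(f^{-1}(Z))=Z$ for every subset $Z\subseteq Y$; therefore $f^{-1}(Z_i)=f^{-1}(Z_{i+1})$ implies $Z_i=Z_{i+1}$, so the original chain is stationary and $Y$ is Noetherian.

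Applying this with $X=\spec(B)$, $Y=\spec(A)$, and $f$ the given surjective morphism of affine schemes shows that $\spec(A)$ is topologically Noetherian, hence locally topologically Noetherian, which is the claim. I do not expect any genuine obstacle here; the only points needing a word of care are the quasi-compactness reduction via \cref{top:loc_noe} (so that ``locally'' may be dropped on the source) and the elementary identity $f(f^{-1}(Z))=Z$ that uses surjectivity.
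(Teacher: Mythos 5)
Your proof is correct and follows essentially the same route as the paper: both reduce via \cref{top:loc_noe} to the case where $\spec{(B)}$ is topologically Noetherian and then exploit the identity $f(f^{-1}(Z))=Z$ for the surjection. The only (cosmetic) difference is that the paper verifies Noetherianity of $\spec{(A)}$ through the criterion of \cref{top_noe_equiv} that every open subset is quasi-compact, by pulling back open covers, whereas you check the descending chain condition on closed subsets directly; the two characterizations are equivalent and your version is, if anything, slightly more self-contained.
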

	\begin{proof}
		Due to quasi-compactness of \(\spec{\left(B\right)}\) we may again reduce to the case where \(\spec{\left(B\right)}\) is topologically Noetherian by \cref{top:loc_noe}. Now, let \(U\subseteq \spec{\left(A\right)}\) be an open subset and \((U_i)_{i\in I}\) an open covering of \(U\). The preimage of \(U\) is quasi compact as \(\spec{\left(B\right)}\) is topologically Noetherian and hence, we can take a finite subcover of \((f^{-1}(U_i))_{i\in I}\), say \((f^{-1}(U_1),\dots,f^{-1}(U_n))\) (possibly after relabelling). As  \(f\) is surjective, we have \(f(f^{-1}(V))=V\) for all \(V\subseteq \spec{\left(A\right)}\) and hence, \((f(f^{-1}(U_1)),\dots, f(f^{-1}(U_n)))=(U_1,\dots,U_n)\) is a finite subcover, proving that \(U\) is quasi-compact. Hence, by \cref{top_noe_equiv}  \(\spec{\left(A\right)}\) is topologically Noetherian.
	\end{proof}
	
	\begin{corollary}\label{body:loc_top_noe_proof}
		The property of being a locally topologically Noetherian scheme satisfies \ref{dagger}.
	\end{corollary}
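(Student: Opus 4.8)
The plan is to check the four bullets of \ref{dagger} for $\textbf{P}=$ ``locally topologically Noetherian'' in turn, observing that two of them have already been proved above and that the other two are soft consequences of the fact that the defining condition is imposed on an open neighbourhood of each point.

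The first bullet is exactly \cref{noe_base}: an integral morphism is affine, hence separated and submersive, and the surjectivity of $\rho\times_Yp_X$ together with \cref{gr_act:tran iff surj} puts us in the hypotheses of \cref{top_noe_prop} --- the schemes are sober, $f$ is closed because it is integral, the fibres are finite since $\tilde G$ is, and $\operatorname{Aut}(X/Y)$ acts transitively on them --- so the property passes from $Y$ to $X$. The second bullet is the proposition immediately preceding this corollary, which uses quasi-compactness of $\spec(B)$ and \cref{top:loc_noe} to reduce to the topologically Noetherian case and then shows that preimages of opens are quasi-compact, invoking \cref{top_noe_equiv}.

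For the third bullet, let $X$ be locally topologically Noetherian and $U\subseteq X$ open. For $u\in U$ pick an open $V\subseteq X$ with $u\in V$ and $V$ Noetherian; then $V\cap U$ is an open subset of $V$, hence Noetherian (any open of $V\cap U$ is open in $V$, so quasi-compact by \cref{top_noe_equiv}, whence $V\cap U$ is Noetherian by the same lemma), and it is an open neighbourhood of $u$ in $U$. For the fourth bullet, if $(X_i)_{i\in I}$ is an open cover of $X$ with each $X_i$ locally topologically Noetherian, then for $x\in X$ choose $i$ with $x\in X_i$ and an open $V\subseteq X_i$ with $x\in V$ and $V$ Noetherian; since $X_i$ is open in $X$, $V$ is an open neighbourhood of $x$ in $X$ with Noetherian underlying space. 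Hence $X$ is locally topologically Noetherian, and \ref{dagger} holds.

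The main difficulty has already been absorbed into the earlier results (\cref{top_noe_prop} and the affine-descent proposition), so this corollary itself has no substantial obstacle; the only points requiring attention are (a) verifying the hypotheses of \cref{top_noe_prop} when translating the group-action bullet, in particular that an integral morphism of schemes is closed and that both source and target are sober, and (b) remembering that the second bullet of \ref{dagger} is stated for affine schemes, which is precisely the generality in which the preceding proposition is proved. The remainder is the routine fact that ``locally $\mathcal{Q}$'', for a topological property $\mathcal{Q}$ inherited by open subspaces, is itself inherited by open subspaces and local on open covers.
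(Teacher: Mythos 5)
Your proof is correct and follows essentially the same route as the paper, which simply cites \cref{noe_base}, the preceding affine proposition, and \cref{top:def_loc_noe}; your verification of the third and fourth bullets spells out the routine "locally Noetherian is inherited by opens and local on covers" argument that the paper leaves implicit.
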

	\begin{proof}
		This follows from the statements above, as well as \cref{top:def_loc_noe}.
	\end{proof}

	\subsection{Reducing to flat, quasi-finite morphisms of locally finite presentation}
	\noindent The following lemma and theorem and their proofs are essentially the same as \cite{gabber} except that we substitute the necessary properties for \(\textbf{P}\) instead of focussing purely on local connectedness. Nonetheless, we feel that it is insightful to repeat the proofs and give some more details.
	\begin{lemma}\label{gabber:A.6}\cite[Proposition A.6]{gabber}
		Let \(X\) and \(Y\) be schemes. If \(f\colon X\to Y \) is finite locally free, \(\textbf{P}\) a property that satisfies \ref{dagger} and \(Y\) has \(\textbf{P}\), then \(X\) has \(\textbf{P}\).
	\end{lemma}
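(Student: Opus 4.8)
The plan is to pass to the affine case and then interpose the scheme $\spec(D)$ of \cref{A.6_helper_prop}: it will receive $Y$ along an integral surjection carrying a transitive action of a constant finite group scheme, and it will dominate $X$ along a surjection, so that the first and second bullets of \ref{dagger} propagate $\textbf{P}$ from $Y$ to $\spec(D)$ and then from $\spec(D)$ to $X$. First I would reduce to $Y=\spec(A)$, $X=\spec(B)$ with $A\to B$ finite locally free: every point of $Y$ has an affine open neighbourhood over which this holds, such neighbourhoods still have $\textbf{P}$ by the third bullet of \ref{dagger}, and by the fourth it suffices to treat each of them. The rank of $B$ over $A$ is locally constant, so $\spec(A)$ is a disjoint union of open-and-closed pieces on which it equals a fixed $d\ge 0$; over the piece with $d=0$ the preimage is the empty scheme, which has $\textbf{P}$ by the fourth bullet applied to the empty covering, so, using the third and fourth bullets once more, I may assume $A\to B$ is finite locally free of constant rank $d>0$.

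Next I would form, as in \cref{A.6_helper_prop}, the $d$-fold tensor power $C=\otimes_A^{d}B$ with the $\mathfrak{S}_d$-action permuting the factors, the norm homomorphism $\nu\colon\mathrm{TS}^{d}(B)=C^{\mathfrak{S}_d}\to A$ --- which is an $A$-algebra homomorphism by \cref{SGAProp6.3.1} and surjective by \cref{surjectivity} --- the pushout $D=C\otimes_{\mathrm{TS}^{d}(B),\nu}A$, and the $A$-algebra maps $q_i\colon B\to D$ sending $b$ to the class of the tensor with $b$ in the $i$-th slot and $1$ elsewhere. The map $\spec(C)\to\spec(\mathrm{TS}^{d}(B))$ is finite ($C$ is finite over $A$, hence over the invariant subring) and surjective (an integral injective ring extension is surjective on spectra), and $\nu$ is surjective, so $\spec(D)=\spec(C)\times_{\spec(\mathrm{TS}^{d}(B))}Y$ is the base change of $\spec(C)\to\spec(\mathrm{TS}^{d}(B))$ along the closed immersion $Y\hookrightarrow\spec(\mathrm{TS}^{d}(B))$ defined by $\nu$; hence $\spec(D)\to Y$ is finite, in particular integral, and surjective. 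Moreover the $\mathfrak{S}_d$-action on $C$ fixes $\mathrm{TS}^{d}(B)$ and so descends to an action of the constant $Y$-group scheme $G$ attached to $\mathfrak{S}_d$ on $\spec(D)$ over $Y$.

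The proof is then completed by two claims. Claim (a): the morphism $G\times_Y\spec(D)\to\spec(D)\times_Y\spec(D)$ induced by the action and a projection is surjective; granting it, the first bullet of \ref{dagger}, applied to $\spec(D)\to Y$, shows that $\spec(D)$ has $\textbf{P}$ because $Y$ does. Claim (b): the morphism $q_1\colon\spec(D)\to X=\spec(B)$ is surjective; granting it, the second bullet of \ref{dagger} shows that $X$ has $\textbf{P}$ because $\spec(D)$ does, which is the lemma. Both claims can be checked fibrewise over $Y$ --- surjectivity because it may be tested on fibres, and claim (a) because by \cref{gr_act:tran iff surj} it amounts to the transitivity of $G_s$ on the fibre $(\spec(D))_s$ for every $s\in Y$ --- so I may assume $A=k$ is a field.

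Over a field $k$, flatness of $B$ makes $\mathrm{TS}^{d}$ agree with $\Gamma^{d}$ and hence compatible with base change to residue fields (\cref{SGA43 5.5.2.5}), and \cref{A.6_helper_prop} says that $\charpol_{B/k}(b;T)=\prod_{i=1}^{d}\bigl(T-q_i(b)\bigr)$ in $D[T]$; geometrically this identifies, for a field extension $\Omega/k$, the set of $\Omega$-points of $\spec(D)$ with the set of ordered $d$-tuples $(x_1,\dots,x_d)$ of $\Omega$-points of $X$ whose associated effective $0$-cycle of degree $d$ is the cycle $[\spec(B\otimes_k\Omega)]$ of the fibre. From this description $\mathfrak{S}_d$ permutes the orderings of a fixed such cycle transitively, which gives (a), while every point of $X$ occurs in $[\spec(B\otimes_k\Omega)]$ for $\Omega$ large enough, hence appears as some $x_i$ and --- after applying a permutation --- as $x_1$, which gives (b). I expect the main obstacle to be exactly this fibrewise analysis: extracting the structure of $\spec(D)$ over a field from the characteristic-polynomial factorization of \cref{A.6_helper_prop} (using also that $\Gamma^{d}$, hence $\mathrm{TS}^{d}$, commutes with base change), and handling residue-field extensions with care when turning "transitive on fibres" into the scheme-theoretic surjectivity of $G\times_Y\spec(D)\to\spec(D)\times_Y\spec(D)$ required by the first bullet of \ref{dagger}.
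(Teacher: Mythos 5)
Your proposal follows the same architecture as the paper: reduce to $A\to B$ finite locally free of constant rank $d>0$, form $D=(\otimes_A^{d}B)\otimes_{\mathrm{TS}^{d}(B),\nu}A$, transfer $\textbf{P}$ to $\spec{\left(D\right)}$ by the first bullet of \ref{dagger} applied to the integral surjection $\spec{\left(D\right)}\to Y$ with its $\mathfrak{S}_d$-action, and then push $\textbf{P}$ down to $X$ along $\spec{\left(q_1\right)}$ by the second bullet. Where you diverge is in how the two surjectivity claims are established, and that is where the gap sits. The paper proves (a) without any fibrewise analysis: the shear map for $\spec{\left(\otimes_A^{d}B\right)}\to\spec{\left(\mathrm{TS}^{d}(B)\right)}$ is surjective by the standard fact that a finite group acts transitively on the fibres of $\spec{\left(R\right)}\to\spec{\left(R^{G}\right)}$, and the shear map for $\spec{\left(D\right)}\to\spec{\left(A\right)}$ is its base change along $\spec{\left(\nu\right)}$. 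For (b) the paper avoids geometric points entirely: $\Ker(A\to D)$ is nil because $\spec{\left(D\right)}\to\spec{\left(A\right)}$ is surjective; if $q_1(b)=0$ then all $q_i(b)=0$ since the $\mathfrak{S}_d$-action permutes the $q_i$, so $\charpol_{B/A}(b;T)=\prod_i(T-q_i(b))$ reduces to $T^{d}$ in $D[T]$, its lower coefficients are nilpotent, and Cayley--Hamilton makes $b$ nilpotent; thus $\spec{\left(q_1\right)}$ is dominant, and being finite it is closed, hence surjective.

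The concrete gap in your route is the asserted identification of $\spec{\left(D\right)}(\Omega)$ with the ordered $d$-tuples of $\Omega$-points of $X_\Omega$ realizing the cycle of the fibre. The factorization of \cref{A.6_helper_prop} gives only one direction: an $\Omega$-point of $\spec{\left(D\right)}$ yields a tuple $(\chi_1,\dots,\chi_d)$ with $\prod_i(T-\chi_i(b))=\charpol_{B_\Omega/\Omega}(b;T)$. That half (together with the fact that $\otimes^{d}B\to D$ is surjective, so a point of $D$ is determined by its tuple) suffices for claim (a), but claim (b) needs the converse: that every tuple realizing the cycle --- in particular one with a prescribed first coordinate --- actually factors through $D$, i.e.\ that $\chi_1\otimes\dots\otimes\chi_d$ restricts to $\nu_\Omega$ on $\mathrm{TS}^{d}(B_\Omega)$. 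This does not follow from agreement on the elements $b^{\otimes d}$ alone, since in positive characteristic $\mathrm{TS}^{d}(B_\Omega)$ need not be generated by them; it is true, but proving it essentially amounts to redoing (over $\Omega$) the surjectivity-and-transitivity analysis of $\spec{\left(\otimes^{d}B_\Omega\right)}\to\spec{\left(\mathrm{TS}^{d}(B_\Omega)\right)}$, which is exactly the step you flag as the "main obstacle" and leave open. The paper's Cayley--Hamilton argument is the intended way around it.
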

	\begin{proof}
		We may reduce to the affine case \(X=\spec{\left(B\right)}\to \spec{\left(A\right)}=Y\), where \(B\) is locally free of rank \(d> 0\) over \(A\) as \(f\) is affine, and because we can check for \(\textbf{P}\) locally in the target. Denote by \(\nu\) the morphism associated with the norm \(C\coloneqq \Gamma ^{d}_A B=(\otimes _A^{d}B)^{\mathfrak{S}_d}\to A\), in the sense that \(\norm_{B/A}(b)=\nu(\gamma (b))\), where \(\gamma (b)=b^{\otimes d}\in (\otimes _A^{d}B)^{\mathfrak{S}_d}\), and let \(G\) be the constant \(Y\)-group scheme associated to \(\mathfrak{S}_d\). Consider the co-cartesian (=tensor product) square of rings and the corresponding cartesian square of spectra
		\begin{center}
			\begin{tikzcd}
				C^{\mathfrak{S}_d}\arrow[r]{}{}\arrow[d,two heads]{}{\nu}&C\arrow[d,two heads]{}{q}&\arrow[r,mapsto]{}{\spec{}}&\phantom{a}& \spec{\left(C^{\mathfrak{S}_d}\right)}&\spec{\left(C\right)}\arrow[l,two heads]{}{}\\
				A\arrow[r]{}{}&D\arrow[ul, phantom, "\scalebox{1.2}{\color{black}$\ulcorner$}", very near start, color=black]&\phantom{a}&\arrow[l,mapsto,swap]{}{\Gamma }&\spec{\left(A\right)}\arrow[u,hook, swap]{}{\spec{\left(\nu\right)}}&\spec{\left(D\right)}.\arrow[u,hook, swap]{}{\spec{\left(q\right)}}\arrow[l,two heads]{}{}	\arrow[ul, phantom, "\scalebox{1.2}{\color{black}$\ulcorner$}", very near start, color=black]
			\end{tikzcd}
		\end{center}
		We thus have \(D=(\otimes _A^{d}B)/I(\otimes _A^{d}B)\), where \(I=\operatorname{Ker}\left(\nu\right)\) due to \(\nu\) being surjective (cf. \cref{surjectivity}), and \(G\) is acting on \(\spec{\left(D\right)}\) in such a way that \(G\times_Y \spec{\left(D\right)}\to \spec{\left(D\right)}\times_{\spec{\left(A\right)}}\spec{\left(D\right)}\) is surjective because \(\spec{\left(A\right)}\) is a quotient scheme of \(\spec{\left(D\right)}\) by \(\mathfrak{S}_d\). The second property of \ref{dagger} then yields that  \(\spec{\left(D\right)}\) has \(\textbf{P}\). \\
		In the cocartesian square, the top horizontal map is an integral extension of rings, so in particular also injective and hence, surjective on \(\spec\). So because surjectivity is stable under base change, the bottom horizontal arrow is surjective on \(\spec\) and thus, the morphism \(A\to D\) has nil kernel. \\
		For \(1 \leq i \leq d\), let \(q_{i}\colon B\to D\) be the map \(b\mapsto q(1 \otimes \dots 1\otimes b \otimes 1 \dots \otimes 1)\), with \(b\) at the \(i\)-th place. Let \(b\in B\) such that \(q_1(b)=0\). We will show that \(b\) is nilpotent. The characteristic polynomial \(P(T)=\charpol_{B/A}(b,T)\in A[T]\) of \(b\) factors, in \(D[T]\), as \(\prod (T-q_i(b))\) by \cref{A.6_helper_prop}. 
		Since \(\mathfrak{S}_d\) permutes the \(q_i\) for all \(i\), one has \(q_i(b)=0\), and the image of \(P(T)\) in \(D[T]\) reduces to \(T^{d}\), showing that the coefficients of \(P(T)\) of degree \(<d\) are nilpotent. By Cayley Hamilton we deduce that \(b\) is nilpotent. 
		
		\noindent Therefore, the morphism \(\spec{\left(q_1\right)}\colon \smash{\spec{\left(D\right)} \to \spec{\left(B\right)}}\) is a dominant morphism of affine schemes. It is even surjective because it is finite and thus, closed. Finiteness follows because as \(A\to B\) is finite, so is \(B\to \otimes ^{d}_AB\), the morphism \(\otimes ^{d}_AB\to D\) is surjective, so finite and composition of finite morphisms is finite. By the second condition of \ref{dagger}, if \(\spec{\left(D\right)}\) has \(\textbf{P}\), so does \(\spec{\left(B\right)}\), finishing the proof.
	\end{proof}

	\begin{theorem}\cite[Corollary A.7]{gabber}\label{A7}
		Let $f\colon X \to Y$ be a flat, locally quasi-finite morphism of schemes locally of finite presentation, \(\textbf{P}\) a property that satisfies \ref{dagger} and let $Y$ have \(\textbf{P}\). Then $X$ has \(\textbf{P}\).
	\end{theorem}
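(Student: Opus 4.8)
The plan is to deduce the theorem from \cref{gabber:A.6} by means of the local structure of flat, locally quasi-finite morphisms that are locally of finite presentation, using that the last two conditions of \ref{dagger} make \(\textbf{P}\) a property which can be checked on an open cover of the source.

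First I would reduce to a local statement: by the fourth condition of \ref{dagger} it is enough to produce, for each \(x \in X\), an open neighbourhood \(U_x \subseteq X\) of \(x\) which has \(\textbf{P}\), since the \(U_x\) then form an open cover of \(X\) by schemes with \(\textbf{P}\). So fix \(x \in X\) and set \(y = f(x)\). Since \(f\) is flat, quasi-finite and of finite presentation at \(x\), the local structure theory of such morphisms (Zariski's Main Theorem together with the flatness and finite presentation hypotheses; see EGA IV, \textsection 18.12, or the argument in \cite[Corollary A.7]{gabber}) provides an affine open \(V \subseteq Y\) containing \(y\), an affine open \(U \subseteq X\) containing \(x\) with \(f(U) \subseteq V\), a finite locally free morphism \(\pi \colon Z \to V\), and an open immersion \(\iota \colon U \hookrightarrow Z\) of \(V\)-schemes through which \(f|_U\) factors.

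Now I would propagate \(\textbf{P}\) along this factorization. Since \(Y\) has \(\textbf{P}\), the open subscheme \(V\) has \(\textbf{P}\) by the third condition of \ref{dagger}. Applying \cref{gabber:A.6} to the finite locally free morphism \(\pi \colon Z \to V\) shows that \(Z\) has \(\textbf{P}\). Finally, \(\iota\) identifies \(U\) with an open subscheme of \(Z\), so the third condition of \ref{dagger} again shows that \(U\) has \(\textbf{P}\). Taking \(U_x := U\) and letting \(x\) vary, the fourth condition of \ref{dagger} gives that \(X\) has \(\textbf{P}\).

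The only substantial ingredient is the local structure theorem, and that is where the main difficulty lies: one must check that "flat, locally quasi-finite and locally of finite presentation" really does force \(f\), after shrinking the source and target around \(x\) and \(y\) to affines, to become an open immersion into a finite locally free \(V\)-scheme. This rests on Zariski's Main Theorem to factor \(f\) locally through a finite morphism, and on the flatness and finite presentation hypotheses to arrange that the finite piece is finite locally free. Granting this, the argument is a short diagram chase through the third and fourth conditions of \ref{dagger} and \cref{gabber:A.6}, exactly as in \cite[Corollary A.7]{gabber}.
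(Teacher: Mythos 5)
Your overall strategy (reduce to \cref{gabber:A.6} via a local factorization, then propagate \(\textbf{P}\) with the third and fourth conditions of \ref{dagger}) is the right one, and the second half of your argument is fine as far as it goes. The gap is the local structure statement you invoke, which is exactly the point you flag as "where the main difficulty lies" and then do not prove. You claim that for a flat, locally quasi-finite morphism locally of finite presentation one can find, \emph{after Zariski shrinking on source and target only}, a factorization \(U \hookrightarrow Z \to V\) with \(U \hookrightarrow Z\) an open immersion and \(Z \to V\) finite \emph{locally free}. Neither of your cited sources asserts this. Zariski's Main Theorem produces an open immersion into a scheme that is finite over \(V\), but gives no control on the flatness of that compactification: the non-flat locus of \(Z\) sits inside \(Z \setminus U\) and cannot be removed by shrinking \(U\) around \(x\) or \(V\) around \(y\). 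And EGA IV 18.12.1 — the reference you point to — only produces a finite (hence, with flatness and finite presentation, finite locally free) piece \emph{after an étale base change} \(Y' \to Y\); the étale localization is an essential part of that statement, not a removable convenience. So the key input of your proof is a strictly stronger structure result than what the literature provides, and you give no argument for it.

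The paper's proof is organized precisely to avoid needing your claim. It first treats the case where \(f\) is étale: there one has the standard étale presentation \(U \hookrightarrow \spec\bigl(R[t]_{g'}/(g)\bigr)\) with \(g\) monic, and \(\spec\bigl(R[t]/(g)\bigr) \to \spec(R)\) \emph{is} finite locally free, so your one-step argument genuinely works — but this uses a structure theorem special to étale morphisms, not ZMT. For general flat, locally quasi-finite \(f\) of finite presentation, the paper then applies EGA IV 18.12.1 to get an étale \(g\colon Y' \to Y\) and an open \(V \subseteq X \times_Y Y'\) finite locally free over \(Y'\); the already-established étale case shows \(Y'\) has \(\textbf{P}\), \cref{gabber:A.6} shows \(V\) has \(\textbf{P}\), and one then descends along the (open, étale) projection \(X \times_Y Y' \to X\) to cover \(X\) by opens with \(\textbf{P}\). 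To repair your proof you would either have to supply a proof of your Zariski-local factorization claim (which I do not believe is available in this generality), or restructure the argument into this two-step bootstrap, in which case you also need the first and second conditions of \ref{dagger} (or at least the descent step) to get from \(V\) over \(Y'\) back down to an open neighbourhood of \(x\) in \(X\) — your proposal never uses conditions one and two, which is itself a warning sign.
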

	\begin{proof}
		We first show that this holds if \(f\) is étale and deduce the main result from there.
		If \(f\) is étale, we can apply the local structure theorem for étale morphisms (see \cite[18.4.6 (ii)]{EGAIV4} or \cite[\href{https://stacks.math.columbia.edu/tag/025A}{Tag 025A}]{stacks}): Let \(x\in X\) be a point and \(V\subseteq Y\) an affine neighbourhood of \(f(x)\). We find an affine open \(x\in U\subset X\) with \(f(U)\subseteq V\) such that the diagram 
		\begin{center}
			\begin{tikzcd}
				X\arrow[d]{}{f}&U\arrow[l]{}{}\arrow[d]{}{f\vert _U}\arrow[r]{}{j}&\spec{\left(R[t]_{g'}/(g)\right)}\arrow[d]{}{}\\
				Y&\arrow[l]{}{}V\arrow[r,equal]{}{}&\spec{\left(R\right)},
			\end{tikzcd}
		\end{center}
		where \(j\) is an open immersion, commutes. Further, we notice that \(R[t]_{g'}/(g)\cong (R[t]/(g))_{g'}\) and that \(R[t]/(g)\) is a free \(R\) module of rank \(\mathrm{deg}(g)-1\) (recall that polynomial division with remainder is well-defined). Hence, \(\spec{\left(R[t]_{g'}/(g)\right)}\to \spec{\left(R[t]/(g)\right)}\) is an open immersion and \(\spec{\left(R[t]/(g)\right)}\to R\) is finite and locally free so, \(\spec{\left(R[t]/(g)\right)}\) has \(\textbf{P}\) by \cref{gabber:A.6}. Then \ref{dagger} tells us that any open subset has \(\textbf{P}\) and because \(j\) is an open immersion, we conclude that \(U\) has \(\textbf{P}\). We can therefore cover \(X\) with open subschemes that have \(\textbf{P}\) and conclude by the fourth property of \ref{dagger} that \(X\) has \(\textbf{P}\). \\
		For the main result, we use \cite[18.12.1]{EGAIV4}:
		For each (isolated) $x\in X$ there exists an étale morphism $g\colon Y' \to Y$ and a commutative diagram, in which we denote $X'\coloneqq X\times_Y Y'$
		\vspace{-0.25cm}
		\begin{center}
			\begin{tikzcd}
				V\arrow[r]{}{j}\arrow[rd]{}{}&X'\arrow[dr, phantom, "\scalebox{1.2}{\color{black}$\lrcorner$}" , very near start, color=black]\arrow[r]{}{g'}\arrow[d]{}{f'}&X\arrow[d]{}{f}\\
				&Y'\arrow[r]{}{g}&Y,
			\end{tikzcd}
		\end{center}
		where morphism $j$ is an open immersion and thus $g'(j(V))$ is an open set containing $x$ and $f'\circ j$ is finite. Hence, $V\to Y'$ is a finite, flat morphism locally of finite presentation. From above we deduce that $Y'$ has \(\textbf{P}\) and due to \cite[\href{https://stacks.math.columbia.edu/tag/02KB}{Tag 02KB}]{stacks} we can apply \cref{gabber:A.6} and conclude that $V$ has \(\textbf{P}\). We have thus found an open covering of $X$ such that all the elements in the covering have \(\textbf{P}\) proving $X$ to be locally connected.
	\end{proof}
	
	In \cite{gabber}, Gabber further proves that for \(\textbf{P}=\text{"locally connected"}\) we can even reduce to morphisms that are flat and locally of finite presentation.


\begin{thebibliography}{SGA72b}
		
		\bibitem[AM69]{AM69}
		Michael~Artin~and~Barry~Mazur.
		\newblock {\em Etale Homotopy. } {Lecture Notes in Mathematics. } \newblock {Springer Berlin},  Heidelberg, \newblock{1969. }  \newblock {\doi{10.1007/BFb0080957}. } 
		
		\bibitem[Bou89]{bourbaki}
		N.~Bourbaki.
		\newblock {\em Algebra I: Chapters 1-3. }  
		\newblock {Elements of mathematics. } 
		\newblock {Springer Berlin},  Heidelberg, \newblock{1989. } \doi{10.1007/978-3-540-33850-5}.
		
		\bibitem[Bou95]{Bou95}
		N.~Bourbaki.
		{\em General topology: Chapters 1-4}.
		\newblock {Elements of mathematics. }
		\newblock {Springer Berlin},  Heidelberg, \newblock{1995. } \doi{10.1007/978-3-642-61701-0}.

		\bibitem[BS14]{BS proet}
		Bhargav~Bhatt~and~Peter~Scholze. 
		\newblock {\em The pro-\'etale topology for schemes. } 
		\newblock {arXiv preprint} 
		\newblock {2014.} 
		\newblock {\url{https://arxiv.org/abs/1309.1198},}	
		\newblock {last accessed: 05.07.2024.}

		\bibitem[EGAIV$_4$]{EGAIV4}
		A.~Grothendieck.
		\newblock \'{E}l\'ements de g\'eom\'etrie alg\'ebrique (r\'edig\'es avec la collaboration de Jean Dieudonn\'e) {IV}. \'{E}tude locale
		des sch\'emas et des morphismes de sch\'emas, Quatri\`eme partie.
		\newblock {\em Inst. Hautes \'Etudes Sci. Publ. Math.}, (32):361, 1967.
		
		\bibitem[FL82]{FL82}
		Eric~M.~Friedlander.
		\newblock {\em Etale Homotopy of Simplicial Schemes. } {Annals of Mathematics Studies. } \newblock {Princeton University Press},   \newblock{1982. }  \newblock{\textsc{isbn}: 9780691083179.}
		
		
		\bibitem[Gab22]{gabber}
		Ofer~Gabber.
		\newblock {\em On locally connected schemes. } {In:{\em Algebra \& Number Theory }}\newblock{16.3,} \newblock{2022. } \newblock{pp 560--565. } \newblock {\doi{10.2140/ant.2022.16.521}. } \textsc{url}:\url{https://doi.org/10.2140/ant.2022.16.521}.

		\bibitem[GW1]{GW1}
		Ulrich~Görtz~and~Torsten~Wedhorn.
		\newblock{\em Algebraic Geometry. }
		\newblock{Vieweg+Teubner Verlag Wiesbaden}, Berlin, \newblock{2010. }
		\doi{10.1007/978-3-8348-9722-0}.

		\bibitem[Laz55]{lazard}
		Michel~Lazard.
		\newblock {\em Sur les groupes de {Lie} formels \`a un param\`etre}. {In: Bulletin de la Soci\'et\'e Math\'ematique de France,} \newblock{83}, \newblock{1955,}  \newblock{Soci\'et\'e math\'ematique de France, } \newblock{pp 251--274. } \newblock {\doi{10.24033/bsmf.1462}. } \textsc{url}:\url{http://www.numdam.org/articles/10.24033/bsmf.1462/}.

		\bibitem[MaO24]{mathoverflow}
		Johan~[Johan~de~Jong].
		\newblock {\itshape Integral preimages of topologically Noetherian, affine schemes}.
		\newblock \url{https://mathoverflow.net/a/474671/531286}, last accessed: 20.07.2024.

		\bibitem[Ro63]{roby}
		Norbert~Roby.
		\newblock{\em Lois polynomes et lois formelles en théorie des modules. } 
		\newblock{Annales scientifiques de l'École Normale Supérieure, } 
		\newblock{Serie 3, } 
		\newblock{Volume 80 no. 3, 1963. } 
		\newblock{pp. 213--348. } 
		\newblock{\doi{10.24033/asens.1124}. } 
		\newblock{\textsc{url}:\url{http://www.numdam.org/articles/10.24033/asens.1124/}.}

		\bibitem[SGA4$_3$]{SGA43}
		{\em Th\'eorie des topos et cohomologie \'etale des sch\'emas. {T}ome 2}.
		\newblock Lecture Notes in Mathematics, Vol. 305. Springer-Verlag, Berlin,
		1972.
		\newblock S{\'e}minaire de G{\'e}om{\'e}trie Alg{\'e}brique du Bois-Marie
		1963--1964 (SGA 4), Dirig{\'e} par M. Artin, A. Grothendieck et J. L.
		Verdier. Avec la collaboration de N. Bourbaki, P. Deligne et B. Saint-Donat.	
		
		\bibitem[Sta24]{stacks}
		The~Stacks~Project~Authors.
		\newblock {\em Stacks Project}.
		\newblock {\textsc{url}:\url{https://stacks.math.columbia.edu/},} last accessed: 05.07.2024.
	\end{thebibliography}

\end{document}